\newtheorem{theorem}{Theorem}[section]
\newtheorem{lemma}[theorem]{Lemma}
\newtheorem{proposition}[theorem]{Proposition}
\newtheorem{corollary}[theorem]{Corollary}
\theoremstyle{definition}
\newtheorem{definition}[theorem]{Definition}
\newtheorem{notation}[theorem]{Notation}
\newtheorem{remark}[theorem]{Remark}
\newtheorem{defrem}[theorem]{Definition and Remark}
\newtheorem{remnotation}[theorem]{Remark and Notation}
\newtheorem{example}[theorem]{Example}
\numberwithin{equation}{section}
\newcommand{\cA}{ {\mathcal A} }
\newcommand{\cB}{ {\mathcal B} }
\newcommand{\cD}{ {\mathcal D} }
\newcommand{\cEplus}{ {\mathcal E}^{+} }
\newcommand{\cH}{ {\mathcal H} }
\newcommand{\cK}{ {\mathcal K} }
\newcommand{\cM}{ {\mathcal M} }
\newcommand{\cP}{ {\mathcal P} }
\newcommand{\cR}{ {\mathcal R} }
\newcommand{\cS}{ {\mathcal S} }
\newcommand{\cW}{ {\mathcal W} }
\newcommand{\cWplus}{ \cW^{+} }
\newcommand{\bB}{ {\mathbb B} }
\newcommand{\bC}{ {\mathbb C} }
\newcommand{\bN}{ {\mathbb N} }
\newcommand{\bR}{ {\mathbb R} }
\newcommand{\cf}{ \mbox{Cf} }
\newcommand{\Int}{ \mbox{Int} }
\newcommand{\Dcinfdiv}{ \Dc^{\mathrm{(inf-div)}} }
\newcommand{\Rcinfdiv}{ \cR_c^{\mathrm{(inf-div)}} }
\newcommand{\Pcinfdiv}{ \cP_c^{\mathrm{(inf-div)}} }
\newcommand{\ncpolcon}{ \bC \langle Z, Z^* \rangle }
\newcommand{\Dalg}{ \cD_{\mathrm{alg}} }
\newcommand{\Dc}{ \cD_c }
\newcommand{\cPplus}{ \cP^{+} }
\newcommand{\ncsercon}{ \bC_0 \langle \langle z,z^* \rangle \rangle }
\newcommand{\ncpolk}{ \bC \langle X_1, \ldots , X_k \rangle }
\newcommand{\ncserk}{\bC_0 \langle \langle x_1, \ldots , x_k
                       \rangle \rangle}
\title[Eta-diagonals and infinite divisibility for 
R-diagonals]{Eta-diagonal distributions and infinite \\
divisibility for R-diagonals}
\author[H. Bercovici]{Hari Bercovici}
\thanks{HB: supported in part by a grant from the 
National Science Foundation of the USA}
\address{Hari Bercovici: Department of Mathematics,
Indiana University, Bloomington, Indiana, USA.}
\email{bercovic@indiana.edu}
\author[A. Nica]{Alexandru Nica}
\thanks{AN: research supported by a Discovery Grant from 
NSERC, Canada.}
\address{Alexandru Nica: Department of Pure Mathematics, 
University of Waterloo, Ontario, Canada.}
\email{anica@uwaterloo.ca}
\author[M. Noyes]{Michael Noyes}
\address{Michael Noyes: Department of Mathematics, 
Bard High School Early College, New York, 
New York, USA.}
\email{mnoyes@bhsec.bard.edu}
\author[K. Szpojankowski]{Kamil Szpojankowski}
\address{Kamil Szpojankowski:
Department of Pure Mathematics,
University of Waterloo, Ontario, Canada \\
and Faculty of Mathematics and Information Science,
Warsaw University of Technology, Poland.}
\email{kszpojan@uwaterloo.ca, k.szpojankowski@mini.pw.edu.pl}
\begin{document}

\begin{abstract}
The class of $R$-diagonal $*$-distributions is fairly well understood 
in free probability.  In this class, we
 consider the concept of infinite divisibility
with respect to the operation $\boxplus$ of 
free additive convolution.  We exploit  the relation between free probability 
and the parallel (and simpler) world of Boolean probability. 
It is natural to introduce the concept of an
{\em $\eta$-diagonal distribution} that is the 
Boolean counterpart of an $R$-diagonal distribution.  
We establish a number of properties of 
$\eta$-diagonal distributions, then we examine the canonical 
bijection relating $\eta$-diagonal distributions to 
infinitely divisible $R$-diagonal ones.  The overall result is 
a parametrization of an arbitrary $\boxplus$-infinitely divisible 
$R$-diagonal distribution that can arise in a 
$C^{*}$-probability space, by a pair of 
compactly supported Borel probability measures on $[ 0, \infty )$.  
Among the applications of this parametrization, we prove
 that the set of $\boxplus$-infinitely divisible 
$R$-diagonal distributions is closed under the operation 
$\boxtimes$ of free multiplicative convolution.
\end{abstract}

\maketitle
\section{Introduction}

Free additive convolution $\boxplus$ is a binary operation on 
the set $\cP$ of Borel probability measures on $\bR$, reflecting
the addition operation for free selfadjoint elements in a 
noncommutative probability space.  The properties of this operation
parallel in many respects the ones of the usual convolution on
$\cP$,  for instance in the treatment of infinite divisibility.

One way to approach $\boxplus$-infinite divisibility is to
use a bijection constructed in \cite{BP1999} which 
relates free independence to another form of 
noncommutative independence, namely Boolean independence.
In this paper we focus on probability measures 
with compact support, so we view this bijection as a map
$\bB : \cP_c \to \Pcinfdiv$, where $\cP_c$ is the set of
probability measures with compact support on $\bR$, while 
$\Pcinfdiv$ consists of those measures $\mu \in \cP_c$ which are
$\boxplus$-infinitely divisible, that is, have the property
that for every $n \in \bN$, there exists $\mu_n \in \cP_c$
satisfying
\begin{equation}   \label{eqn:1z}
\underbrace{\mu_n \boxplus \cdots \boxplus \mu_n}_n = \mu.
\end{equation}
The bijection $\mathbb B$  connects the fundamental 
transforms of free and Boolean probability, the 
{\em $R$-transform} and respectively the {\em $\eta$-series}.
For $\mu \in \cP_c$, both of these transforms $R_{\mu} (z)$ and 
$\eta_{\mu} (z)$ are convergent power series.  The bijection 
$\bB$ is described by the equation
\begin{equation}   \label{eqn:1a}
R_{\bB ( \mu )} = \eta_{\mu}, \quad \mu \in \cP_c.
\end{equation}
More precisely, for every $\mu \in \cP_c$ there exists a uniquely determined measure
$\nu \in \Pcinfdiv$  such that 
$R_{\nu} = \eta_{\mu}$, and one defines $\bB ( \mu ) := \nu$. 

At the level of compactly supported distributions, the bijection 
$\mathbb B$ is precisely the
pa\-ra\-me\-tri\-za\-tion of $\boxplus$-infinitely divisible 
distributions provided in \cite{V1986}.  This was extended in 
\cite{BP1999} to the space $\cP$ of all Borel probability 
measures on $\bR$.  In a different direction, the bijection 
$\bB$ was extended in \cite{BN2008} to the space of joint 
distributions for $k$-tuples of selfadjoint elements in a 
$C^*$-probability space.  Our goal in this paper is to use a 
multivariate version of the bijection $\bB$ in order to study 
$\boxplus$-infinitely divisibile $R$-diagonal 
distributions, a significant class of $*$-distributions 
considered in free probability.

To explain our results, we introduce some notation. We let  $\cD_c (1,*)$ stand for the collection of all $*$-distributions
of (generally, not selfadjoint) elements in a (generally not tracial) $C^*$-probability space.  There is
a natural operation $\boxplus$ on $\cD_c (1,*)$ which 
corresponds to the addition $a+b$ of two variables $a,b$ 
in the same space such that $\{a,a^*\}$ is free from $\{b,b^*\}$. 
Infinite divisibility in  $\cD_c (1,*)$ is defined 
as in (\ref{eqn:1z}), and we denote by
$\cD_c (1,*)^{\mathrm{(inf-div)}}$ the collection of 
$\boxplus$-infinitely divisible elements
of $\cD_c (1,*)$.  The notions of $R$-transform and $\eta$-series 
also have natural extensions
to the context of $*$-distributions. 

The results of \cite{BN2008}, specialized to two selfadjoint variables, can be applied to $\cD_c (1,*)$ after a simple change of
 coordinates.  There is again a bijection 
$\bB_{(1,*)} : \cD_c (1,*) \to 
\cD_c (1,*)^{\mathrm{(inf-div)}}$ defined by the requirement that
\begin{equation}   \label{eqn:1b}
R_{\bB_{ (1,*)} ( \mu )} = \eta_{\mu}, \quad \mu \in \cD_c (1,*).
\end{equation}
This is analogous to the condition (\ref{eqn:1a}) satisfied by the original bijection $\mathbb B$, but proving the existence of $\bB_{(1,*)}$
is more than a trivial extension of the proof for $\bB$, and 
requires a mixture of combinatorial and analytic methods.

We turn now to $R$-diagonal $*$-distributions, which can be 
succinctly described as the distributions in $\Dc (1,*)$ 
that are invariant under multiplication by a free Haar unitary
(see  \cite[Theorem 15.10, p.~244]{NS2006}).  For our purposes,
 it is more useful to consider the original definition \cite{NS1997} 
of $R$-diagonal distributions which asks that the $R$-transform of 
the distribution be in some sense `diagonal'  
\cite[Definition 15.3, p.~241]{NS2006}.  From this point of view, 
it is clear how to define the Boolean counterpart of $R$-diagonality:
we simply say that a $*$-distribution is 
{\em $\eta$-diagonal} if its $\eta$-series is diagonal.
The map  $\bB_{ (1,*) }$ defined by (\ref{eqn:1b}) 
will then give a bijection between the set of all
$\eta$-diagonal distributions in $\cD_c (1,*)$ and the set of 
$R$-diagonal distributions in $\cD_c (1,*)$ which are 
$\boxplus$-infinitely divisible.

The above discussion shows that there is some interest in studying
$\eta$-diagonal distributions.  In this paper we point out 
a few general algebraic and combinatorial properties of such a 
distribution $\mu$, which actually hold for $\mu$ in 
a larger, purely algebraic space $\Dalg (1,*)$.  
The property of a distribution 
$\mu \in \Dalg (1,*)$ of being $\eta$-diagonal has an elegant 
description phrased directly in terms of 
the $*$-moments of $\mu$.  This result (Theorem \ref{thm:28})
is reminiscent of (but simpler than) the description
\cite[Theorem  1.2.1]{NSS2001} of
$R$-diagonal variables in terms of their $*$-moments.  
The $\eta$-diagonal 
distributions also have other algebraic and combinatorial
properties that are analogous to known properties of 
$R$-diagonal distributions.
In particular, if $a$ is an $\eta$-diagonal element in a 
$*$-probability space $( \cA , \varphi )$ (which means, by 
definition, that $a$ has $\eta$-diagonal $*$-distribution 
with respect to $\varphi$) then it follows that $aa^{*}$
and $a^{*}a$ are Boolean independent elements of $\cA$, and 
that the coefficients of the $\eta$-series of $aa^{*}$ and 
$a^{*}a$ are read from the so-called \emph{determining sequences} 
for the $*$-distribution of $a$.  For details on the terms used
above and for a discussion of why this is
indeed analogous to known facts about $R$-diagonals, see Remark \ref{rem:34} below.

In the case in which the $\eta$-diagonal distribution $\mu$ is
in $\cD_c (1,*)$, we point out a natural 
parametrization for $\mu$, given by a pair of compactly 
supported Borel probability measures on $[ 0, \infty )$.  That is, 
we establish a canonical bijection
\begin{equation}   \label{eqn:1c}
\{ \mu \in \cD_c (1,*) :\mu \mbox{ is $\eta$-diagonal} \}
\ni \mu \leftrightarrow 
( \sigma_1 , \sigma_2 ) \in \cPplus_c \times \cPplus_c ,
\end{equation}
where $\cPplus_c := \{ \sigma \in \cP_c :
\sigma \bigl( \, [0, \infty ) \, \bigr) = 1 \}$.
Without going into details, we mention that all the 
$*$-distributions appearing in this paper are defined as linear
functionals on the algebra $\bC \langle Z, Z^{*} \rangle$ of complex
polynomials in the non-commuting indeterminates $Z$ and $Z^{*}$,
and that the correspondence 
$\mu \leftrightarrow ( \sigma_1, \sigma_2 )$ from 
(\ref{eqn:1c}) amounts to the equalities
\begin{equation}   \label{eqn:1d}
\mu (( ZZ^{*})^n) = \int_0^{\infty} t^n \, d \sigma_1 (t)
\mbox{ and }
\mu (( Z^{*}Z)^n) = \int_0^{\infty} t^n \, d \sigma_2 (t),
\quad n \in \bN .
\end{equation}
In other words, the probability measures $\sigma_1$ and $\sigma_2$ 
which parametrize $\mu$ in
(\ref{eqn:1c}) are simply the distributions of $ZZ^{*}$ and of
$Z^{*}Z$ with respect to the functional $\mu$.  The
relevant point here is that for any given 
$\sigma_1, \sigma_2 \in \cPplus_c$ 
there exists a unique $\eta$-diagonal distribution 
$\mu \in \cD_c (1,*)$ such that (\ref{eqn:1d}) holds.

When the bijection $\bB_{(1,*)}$ is applied to 
$\{ \mu \in \cD_c (1,*) : \mu \mbox{ is $\eta$-diagonal} \}$
in (\ref{eqn:1c}), we obtain a bijection
\begin{equation}   \label{eqn:1f}
\Bigl\{ \nu \in \cD_c (1,*) :
\begin{array}{ll}
 \nu \mbox{ is $R$-diagonal and}  \\
\mbox{$\boxplus$-infinitely divisible}  
\end{array}  \Bigr\}
\ni \nu \leftrightarrow 
( \sigma_1 , \sigma_2 ) \in \cPplus_c \times \cPplus_c .
\end{equation}
Thus, we have a parametrization of a general 
$\boxplus$-infinitely divisible $R$-diagonal distribution 
by a pair of probability measures from $\cPplus_c$.  
Analogously to (\ref{eqn:1d}), it is possible to write 
explicitly the relation
connecting $\sigma_1, \sigma_2$ to the distributions of the 
elements $ZZ^{*}$ and $Z^{*}Z$ in the noncommutative probability
space $( \bC \langle Z, Z^{*} \rangle , \nu )$.  Theorem \ref{thm:62} below
realizes this parametrization by providing precise formulas 
for the $R$-transforms of $ZZ^{*}$ and of $Z^{*}Z$.

An important subclass of $R$-diagonal distributions are 
those which satisfy the KMS condition for some parameter 
$t \in ( 0, \infty )$.  This is a generalization of the trace
condition, where the latter corresponds to the special case 
$t=1$ (see the review in Section 3 below).  For an $R$-diagonal
distribution $\nu$ which satisfies KMS with parameter $t$, one 
can process further the result of Theorem \ref{thm:62} in 
order to obtain explicit formulas for the distributions of 
$ZZ^{*}$ and of $Z^{*}Z$, in terms of the probability 
measures $\sigma_1$ and $\sigma_2$ which parametrize $\nu$. 
These formulas invoke some commonly used elements of free 
harmonic analysis on $\cPplus_c$, and are given in  
Proposition \ref{prop:66}.

As an application of the parametrization from (\ref{eqn:1f}),
we prove that the set of $\boxplus$-infinitely divisible 
$R$-diagonal $*$-distributions is closed under the operation
$\boxtimes$ of multiplicative convolution.

\vspace{10pt}

In addition to the present 
introduction, the paper contains 6 sections.  
Section 2 introduces $\eta$-diagonal $*$-distributions 
and discusses some of their algebraic properties.  Section 3 is 
devoted to a review of $R$-diagonal $*$-distributions, with 
emphasis on facts that are needed in the present paper.  In 
Section 4 we verify that the bijection $\bB_{(1,*)}$ does indeed 
work on $\cD_c (1,*)$ in the way described in (\ref{eqn:1b}).
Section 5 presents the operator model for 
$\eta$-diagonals that was announced in (\ref{eqn:1c}).
Section 6 contains our results concerning the parametrization 
of infinitely divisible $R$-diagonal distributions, and 
a discussion of the KMS example.  Finally, Section 7 discusses
the application to free multiplicative convolution.

\section{$\eta$-series and $\eta$-diagonal $*$-distributions}

\begin{notation}   \label{def:21}

(1) We denote by $\cWplus$ the set 
$\bigsqcup_{n=1}^{\infty} \{ 1,* \}^n$ consisting of all
non-empty words over the two-letter alphabet $\{1,* \}$.  This is a 
semigroup (without unit) under the natural operation of
concatenation.  We denote by $|w|$ the number of letters in a word
$w \in \cWplus$.

(2) The algebra of complex
polynomials in two non-commuting variables $Z$ and $Z^*$ is denoted, 
as usual, by $\mathbb{C}\langle Z, Z^*\rangle$.  For
every word $w = ( \ell_1, \ldots , \ell_n ) \in \cWplus$ we write
\[
Z^{w} = Z^{\ell_1} \cdots  Z^{\ell_n} \in \ncpolcon .
\]
The set $\{ 1 \} \cup \{Z^{w} : w \in \cWplus \}$ is a
basis of $\ncpolcon$ as a complex vector space.

(3) An \emph{algebraic $*$-distribution} is a linear functional 
$\mu: \ncpolcon \to \bC$ such that  $\mu (1) = 1$.  (At this stage
 we do not require $\mu$ to have any additional
properties.)  The values
of $\mu$ on monomials $Z^w$ (with $w \in \cWplus$) will be 
referred to as {\em $*$-moments} of $\mu$.

(4) The collection of all algebraic $*$-distributions from (3)
is denoted $\Dalg (1,*)$. 
\end{notation}

$ $

\begin{notation}   \label{def:22}
 ({\em Series and their coefficients.})
(1) The algebra of formal power series in two non-commuting
indeterminates $z$ and $z^{*}$ is denoted, as usual, by 
$\bC \langle \langle z,z^{*} \rangle \rangle$. The collection
$\ncsercon \subset
\bC \langle \langle z,z^{*} \rangle \rangle$ 
of power series with vanishing constant coefficient is a two-sided 
ideal in $\bC \langle \langle z,z^{*} \rangle \rangle$.
An arbitrary element $f\in \ncsercon$ is of the form
\begin{equation}   \label{eqn:22a}
f(z,z^*) = \sum_{n=1}^{\infty}
\sum_{{\ell_1,\ldots, \ell_n\in\{1,*\}}} 
\alpha_{( \ell_1,\ldots, \ell_n)}
z^{\ell_1}\cdots z^{\ell_n}
= \sum_{w \in \cWplus} \alpha_w z^w,
\end{equation}
where the coefficients $\alpha_w$ are complex 
numbers, and for $w = ( \ell_1, \ldots , \ell_n ) \in \cWplus$ 
we use the notation 
$z^w=z^{\ell_1} \cdots z^{\ell_n}$.

(2) Given $w \in \cWplus$, we denote by 
$\cf_w : \ncsercon \to \bC$ the linear functional which extracts 
the coefficient of $z^w$ from a series 
$f \in \ncsercon$.  That is, if $f$ is given by 
\eqref{eqn:22a}, we have $\cf_w (f) = \alpha_w$, $w \in \cWplus$.

(3) Given a positive integer $n$, a word 
$w = ( \ell_1, \ldots , \ell_n ) \in\mathcal{W}^+$, and a 
partition $\pi$ of $\{ 1, \ldots , n \}$, we define a  functional (non-linear
unless $\pi$ consists of only one block) $\cf_{w ; \pi} : \ncsercon \to \bC$,
as follows.  For every block $B = \{ b_1, \ldots , b_m \}$ of $\pi$,
where $1 \leq b_1 < \cdots < b_m \leq n$, we set
\[
w \vert B = ( \ell_1, \ldots , \ell_n) \vert B  
:=  ( \ell_{b_1}, \ldots , \ell_{b_m}) \in \{ 1, * \}^m.
\]
Then we define
\begin{equation}  \label{eqn:22b}
\cf_{w ; \pi} (f)  := \
\prod_{B \in \pi}  \cf_{w|B} (f),\quad f \in \ncsercon .
\end{equation}
\item[][Suppose, for instance, that $n=5$, 
$\pi = \{ \{ 1,4,5 \} , \{ 2,3 \} \}$, and
$w = ( \ell_1, \ldots , \ell_5)$.
Then $\cf_{w ; \pi } (f) =$
$\cf_{( \ell_1, \ell_4, \ell_5)} (f) \cdot 
\cf_{( \ell_2, \ell_3)} (f)$, $f \in \ncsercon$.]
\end{notation}

\begin{defrem}   \label{def:23}
({\em Moment series, $\eta$-series.})
Fix $\mu\in \Dalg (1,*)$.
(1) The {\em moment series} of $\mu$ is defined as
$M_{\mu} := \sum_{w \in \cWplus} \mu (Z^w) z^w \in \ncsercon$.

(2) The {\em $\eta$-series} of $\mu$ is defined as 
\begin{equation}   \label{eqn:23a}
\eta_{\mu} := M_{\mu} ( 1+ M_{\mu} )^{-1} 
= ( 1+ M_{\mu} )^{-1} M_{\mu} \in \ncsercon ,
\end{equation}
where all the algebraic operations are performed in the 
algebra $\bC \langle \langle z,z^{*} \rangle \rangle$.

(3) It is immediate from  (\ref{eqn:23a}) that the 
series $M_{\mu}$ can be retrieved from $\eta_{\mu}$ by the formula
\begin{equation}   \label{eqn:23b}
M_{\mu} = \eta_{\mu} ( 1- \eta_{\mu} )^{-1}
= ( 1- \eta_{\mu} )^{-1} \eta_{\mu}.
\end{equation}

(4) The right-hand side of (\ref{eqn:23b}) can be
written as a geometric series $\sum_{n=1}^{\infty} \eta_{\mu}^n$ (which converges 
in the sense that  the series
$\sum_{n=1}^\infty {\mathrm{Cf}}_w (\eta_\mu ^n)$ contains only finitely many 
non-zero terms for every $w \in \cWplus$).  This leads to an explicit formula for 
the coefficients of $M_{\mu}$ in terms of those of $\eta_{\mu}$, namely
\begin{equation}  \label{eqn:23c}
\cf_w (M_{\mu}) = \sum_{\pi \in \Int (n)}
\cf_{w ; \pi} ( \eta_{\mu} ),\quad w \in \cWplus \mbox{ with } |w| = n.
\end{equation}
Here $\Int (n)$ denotes the set of {\em interval partitions} of 
$\{ 1, \ldots , n \}$, that is, partitions  which have the property that every block 
$B$ of $\pi$ is  of the form  $\{a,a+1,\dots,b\}$ for some $a \leq b$ in 
$\{ 1, \ldots , n \}$.  An analogous argument converts (\ref{eqn:23a}) into the formula
\begin{equation}  \label{eqn:23d}
\cf_w ( \eta_{\mu} ) = \sum_{\pi \in \Int (n)}
(-1)^{1+ | \pi |} \cf_{w; \pi} ( M_{\mu} ),\quad w \in \cWplus \mbox{ with } |w| = n,
\end{equation} 
where $| \pi |$ denotes
the number of blocks of the partition $\pi$.
\end{defrem}

\begin{remark}   \label{rem:24}
It is clear that the map 
$\Dalg (1,*) \ni \mu \mapsto M_{\mu} \in \ncsercon$
is bijective.  Equations 
(\ref{eqn:23a}) and (\ref{eqn:23b}) show that the map 
$\Dalg (1,*) \ni \mu \mapsto \eta_{\mu} \in \ncsercon$
is a bijection as well.  In other words, we can define a distribution 
$\mu \in \Dalg (1,*)$ by specifying its $\eta$-series.
\end{remark}

\begin{definition}  \label{def:25}
A word $w \in \cWplus$ is said to be {\em alternating}
when it is of the form
$$w =( \underbrace{ 1,*,1,*, \ldots , 1,*}_{2m} )
   = (1,*)^m  \mbox{ or }
w =( \underbrace{ *,1,*,1, \ldots , *,1  }_{2m} )
   = (*,1)^m ,$$ 
for some positive integer $m$.
In the first case $w$ is said to be {\em of type $(1,*)$},
and in the second case $w$ is said to be
{\em of type $(*,1)$}. In these formulas, powers are taken relative to
concatenation.  Note  in particular that  alternating words 
have positive, even length.
\end{definition}

\begin{definition}  \label{def:26}
(1) A distribution $\mu\in\Dalg (1,*)$ 
is said to be {\em $\eta$-diagonal}
if $\cf_w ( \eta_{\mu } ) = 0$  for every word 
$w \in \cWplus$ which is not alternating.

(2) If $\mu$ is $\eta$-diagonal, its $\eta$-series is thus 
of the form
\[
\eta_{\mu} (z,z^{*}) = 
\sum_{n=1}^{\infty} \alpha_n (zz^{*})^n
+ \sum_{n=1}^{\infty} \beta_n (z^{*}z)^n,
\]
with $\alpha_n, \beta_n \in \bC$ for $n \in \bN$.  The
sequences $( \alpha_n )_{n=1}^{\infty}$ and 
$( \beta_n )_{n=1}^{\infty}$ will be called the 
{\em determining sequences} of the $\eta$-diagonal 
distribution $\mu$.
\end{definition}

The main goal of the present section is to reveal an 
equivalent characterization of $\eta$-diagonal
distributions, which is phrased directly in terms of 
$*$-moments.  For this purpose, we require one more 
concept related to words in $\cWplus$.

\begin{defrem}  \label{def:27}
(1) A word $w=( \ell_1, \ldots , \ell_n ) \in \cWplus$ is said 
to be {\em mixed-alternating} if $n=2m$ is even and if the
letters of $w$ are such that
$\ell_{2k-1 }\neq \ell_{2k}$, for $k=1,2,\dots,m$.
Equivalently, $w$ is mixed-alternating when it belongs to the concatenation 
subsemigroup of $\cWplus$ generated by the words $(1,*)$ and $(*,1)$.

(2) By grouping factors, it is
easily seen that every mixed-alternating word $w$ can be written in
a unique way as a concatenation of alternating words such that consecutive
words are of different types.  Indeed, if we write such a word as
\begin{equation}   \label{eqn:27b}
w = w_1 w_2 \cdots w_p 
\end{equation}
where $p \geq 1$, each $w_i$ is alternating,
and $w_i$ is not of the same type as
$w_{i+1}$, $i=1,2,\dots,p-1$,
then the boundaries between the words $w_1, \ldots , w_p$ can be 
retrieved at the places where $w$ has two consecutive 
identical letters.

\noindent
[For  example, $w = (1,*,1,*,1,*,*,1,*,1,1,*,*,1,*,1)$
is mixed-alternating and its canonical factorization 
(\ref{eqn:27b}) is $w_1 w_2 w_3 w_4$ with $w_1 = (1,*)^3$, 
$w_2 = (*,1)^2$, $w_3 = (1,*)$, $w_4 = (*,1)^2$.]
\end{defrem}

\begin{theorem}  \label{thm:28}
For every distribution $\mu \in \Dalg (1,*)$, the statements
\emph{(a)} and \emph{(b)} are equivalent.
\begin{enumerate}
\item[\rm(a)] $\mu$ is $\eta$-diagonal.

\item[\rm(b)] $\mu$ satisfies the following 
conditions\footnote{The acronym $\eta$DM is meant to suggest 
$\eta$-Diagonality-in-Moments.}\emph{:}
\begin{enumerate}
\item[\rm($\eta$DM1)]  Whenever $w \in \cWplus$ is not 
mixed-alternating, it follows that $\mu (Z^w) = 0$.
\item[\rm($\eta$DM2)]  Whenever $w = w_1 \cdots w_p \in \cWplus$ is 
mixed-alternating and factored  as in  \emph{(\ref{eqn:27b})},
it follows that $\mu (Z^w) = \mu (Z^{w_1}) \cdots \mu (Z^{w_p})$.
\end{enumerate}
\end{enumerate}
\end{theorem}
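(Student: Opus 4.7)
The plan is to extract both directions from the explicit formulas \eqref{eqn:23c} and \eqref{eqn:23d} relating moments and $\eta$-coefficients, combined with combinatorial bookkeeping on interval partitions, plus a triangular recursion for the converse direction.

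For (a) $\Rightarrow$ (b), I would expand $\mu(Z^w) = \cf_w(M_\mu)$ using \eqref{eqn:23c}. Under (a), the only $\pi \in \Int(n)$ that contribute are those for which every contiguous sub-word $w|B$ is alternating; in particular every block has even length. The key combinatorial observation is: such a $\pi$ exists iff $w$ is mixed-alternating, and then, writing the canonical factorization $w = w_1 \cdots w_p$ from Definition \ref{def:27}, the contributing $\pi$ are exactly those of the form $\pi = \pi_1 \sqcup \cdots \sqcup \pi_p$ with $\pi_i$ an even-block interval partition of the positions spanning $w_i$. A block cannot cross the boundary between $w_i$ and $w_{i+1}$ because those boundaries are precisely the locations of two consecutive identical letters, so any straddling block would contain an adjacent repeated pair and fail to be alternating; conversely, inside an alternating $w_i$, every even-length contiguous sub-word is automatically alternating. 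The factorization of contributing partitions then yields
\[
\mu(Z^w) \; = \; \prod_{i=1}^p \Bigl( \sum_{\pi_i} \prod_{B \in \pi_i} \cf_{w_i|B}(\eta_\mu) \Bigr) \; = \; \prod_{i=1}^p \mu(Z^{w_i}),
\]
which is ($\eta$DM2) for mixed-alternating $w$; when no admissible $\pi$ exists the sum is empty, giving ($\eta$DM1).

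For (b) $\Rightarrow$ (a), the strategy is to construct an $\eta$-diagonal auxiliary $\tilde\mu$ and show that the two conditions in (b) force $\tilde\mu = \mu$. Inspection of the single-block partition shows that for each $m \geq 1$, the moment $\tilde\mu(Z^{(1,*)^m})$ of any $\eta$-diagonal distribution is a polynomial in $\alpha_1, \ldots, \alpha_m$ only (no $\beta$'s appear, since when all blocks have even length the next block always starts at an odd position, hence yields an alternating sub-word of type $(1,*)$), with $\alpha_m$ appearing with coefficient $1$; similarly $\tilde\mu(Z^{(*,1)^m})$ is a triangular polynomial in $\beta_1, \ldots, \beta_m$. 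One can therefore solve inductively for sequences $(\alpha_m)$, $(\beta_m)$ such that the distribution $\tilde\mu \in \Dalg(1,*)$ defined (uniquely, by Remark \ref{rem:24}) through
\[
\eta_{\tilde\mu}(z,z^*) \; = \; \sum_{m=1}^\infty \alpha_m (zz^*)^m + \sum_{m=1}^\infty \beta_m (z^*z)^m
\]
satisfies $\tilde\mu(Z^{(1,*)^m}) = \mu(Z^{(1,*)^m})$ and $\tilde\mu(Z^{(*,1)^m}) = \mu(Z^{(*,1)^m})$ for every $m$. The already-proved direction (a) $\Rightarrow$ (b) now tells us $\tilde\mu$ satisfies ($\eta$DM1) and ($\eta$DM2); since $\mu$ satisfies the same two conditions, and those two conditions determine every $*$-moment from the alternating ones alone, we conclude $\mu = \tilde\mu$, so $\mu$ is $\eta$-diagonal.

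The main obstacle I anticipate is the combinatorial claim in (a) $\Rightarrow$ (b), namely the clean description of admissible interval partitions as disjoint unions matching the canonical factorization $w = w_1 \cdots w_p$. The argument is elementary but requires meticulous parity bookkeeping: one must track the types of sub-words produced at both odd and even starting positions within each $w_i$, verify that no block can straddle a seam between two distinct-type alternating pieces, and check that admissible partitions of $w$ decouple into independent admissible partitions of each $w_i$ so that the product over blocks cleanly factors into a product over the $w_i$'s.
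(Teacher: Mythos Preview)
Your proposal is correct and follows essentially the same route as the paper: the (a)$\Rightarrow$(b) argument is identical (expand via \eqref{eqn:23c}, observe that contributing interval partitions must refine the canonical factorization because a block straddling a seam would contain a repeated adjacent pair), and for (b)$\Rightarrow$(a) the paper likewise constructs an auxiliary $\eta$-diagonal distribution and appeals to the already-proved direction. The only cosmetic difference is that the paper defines the auxiliary distribution by matching the alternating $\eta$-coefficients (setting $\cf_w(\eta_\nu)=\cf_w(\eta_\mu)$ for alternating $w$) and then proves in a separate lemma that alternating moments agree, whereas you match alternating moments directly via your triangular solve; both variants hinge on the same parity observation that in an even-block interval partition of an alternating word every block begins at an odd position and hence inherits the word's type.
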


The proof of the implication (b) $\Rightarrow$ (a) 
in the above theorem requires two auxilliary results.

\begin{lemma}   \label{lemma:29}
Suppose that a distribution $\mu \in \Dalg (1,*)$ satisfies
the condition \emph{($\eta$DM1)}. Then $\mathrm{Cf}_w ( \eta_{\mu} ) = 0$ for every 
word $w \in \cWplus$ that is not mixed-alternating.
\end{lemma}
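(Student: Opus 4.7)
The plan is to combine the explicit formula (\ref{eqn:23d}) for the coefficients of $\eta_{\mu}$ with a single combinatorial observation about interval partitions. That formula reads, for $w \in \cWplus$ of length $n$,
\[
\cf_w ( \eta_{\mu} ) = \sum_{\pi \in \Int (n)} (-1)^{1+|\pi|} \, \cf_{w ; \pi} ( M_{\mu} ) = \sum_{\pi \in \Int (n)} (-1)^{1+|\pi|} \prod_{B \in \pi} \mu ( Z^{w|B} ) .
\]
Under hypothesis ($\eta$DM1), every factor $\mu ( Z^{w|B} )$ for which $w|B$ fails to be mixed-alternating vanishes. Hence it will suffice to prove the purely combinatorial claim that, whenever $w \in \cWplus$ is not mixed-alternating and $\pi \in \Int (n)$, at least one block $B \in \pi$ has a restriction $w|B$ which is also not mixed-alternating.

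I would argue this claim by contradiction. Fix $\pi = \{ [a_1, b_1], \ldots , [a_p, b_p] \}$ with $a_1 = 1$ and $a_{i+1} = b_i + 1$, and suppose every restriction $w|[a_i, b_i]$ is mixed-alternating. Then each block has even length, and a one-line induction yields that every $a_i$ is odd: indeed $a_1 = 1$, and if $a_i$ is odd with $b_i - a_i + 1$ even, then $a_{i+1} = b_i + 1$ shares the parity of $a_i$. This ``phase'' property means that the local pairing $(\ell_{a_i}, \ell_{a_i + 1}), (\ell_{a_i + 2}, \ell_{a_i + 3}), \ldots$ inside each block is precisely a contiguous piece of the global pairing $(\ell_1, \ell_2), (\ell_3, \ell_4), \ldots , (\ell_{n-1}, \ell_n)$. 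The mixed-alternation of each block therefore forces $\ell_{2k-1} \neq \ell_{2k}$ for every $k = 1, \ldots , n/2$, making $w$ itself mixed-alternating, which contradicts the hypothesis.

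The only step that carries any real content is the parity observation above; once it is in place, the lemma follows by substitution into (\ref{eqn:23d}). What makes the argument specific to interval partitions (and would fail for general non-crossing partitions, say) is that the even-length constraint on each block automatically keeps every block ``in phase'' with the fixed global pairing $(1,2), (3,4), \ldots$ used to define the mixed-alternating property, so that alternation block-by-block and alternation globally coincide exactly.
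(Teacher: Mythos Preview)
Your proof is correct and follows essentially the same approach as the paper: both expand $\cf_w(\eta_\mu)$ via formula (\ref{eqn:23d}) and then argue that each term vanishes because some block restriction $w|B$ fails to be mixed-alternating. The only difference is that the paper dispatches the combinatorial claim in one line---observing that $w = (w|J_1)\cdots(w|J_m)$ as a concatenation, and that mixed-alternating words form a subsemigroup under concatenation (this is built into Definition~\ref{def:27}(1))---whereas you unpack this explicitly via the parity-of-starting-points argument; both are fine.
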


\begin{proof} Suppose that $w$ is not mixed alternating and  $|w| = n$. We prove that
$\mathrm{Cf}_w ( \eta_{\mu} ) = 0$  by showing that each term in the right-hand side of
{(\ref{eqn:23d})} vanishes. Indeed, let $\pi = \{ J_1, \ldots , J_m \} \in \Int (n)$
be  a partition 
where the 
intervals $J_1, \ldots , J_m$ are listed in increasing 
order.   Observe that $w = (w | J_1) \cdots (w | J_m)$
(concatenation product).
Since $w$ is not mixed-alternating, there must exist 
an index $1 \leq k \leq m$ such that $w | J_k$ is not 
mixed-alternating.  For this $k$, condition 
($\eta$DM1) yields $\cf_{w | J_k} (M_{\mu}) = 0$. Therefore 
 the term indexed by $\pi$ in {(\ref{eqn:23d})} vanishes as well 
 because  $\cf_{w | J_k} (M_{\mu})$ is one of its factors.  The lemma follows.
\end{proof}

\begin{lemma}   \label{lemma:210}
Let $\mu, \nu \in \Dalg (1,*)$ be such that
\begin{enumerate}
\item[\rm(1)] Both $\mu$ and $\nu$ satisfy conditions 
\emph{($\eta$DM1)} and \emph{($\eta$DM2)}, and
\item[\rm(2)]
\emph{$\cf_w ( \eta_{\mu} ) = \cf_w ( \eta_{\nu} )$} for every
alternating word $w \in \cWplus$.
\end{enumerate}
Then $\mu = \nu$.
\end{lemma}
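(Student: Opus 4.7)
The plan is a strong induction on $n \in \bN$ showing that $\cf_w(\eta_\mu) = \cf_w(\eta_\nu)$ for every word $w \in \cWplus$ with $|w| \leq n$; once this is done, the bijection $\mu \leftrightarrow \eta_\mu$ of Remark \ref{rem:24} gives $\mu = \nu$. For the inductive step (which subsumes the base case $n=1$, since length-one words are never mixed-alternating), fix $w$ of length $n$ and split into three cases. If $w$ is not mixed-alternating, Lemma \ref{lemma:29} applied to both $\mu$ and $\nu$ gives $\cf_w(\eta_\mu) = 0 = \cf_w(\eta_\nu)$ without using the inductive hypothesis. If $w$ is alternating, hypothesis (2) supplies the equality directly. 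The substantive case is $w$ mixed-alternating but not alternating, with canonical factorization $w = w_1 \cdots w_p$ from (\ref{eqn:27b}) in which $p \geq 2$ and each $|w_i| < n$.

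For this last case, I would invoke formula (\ref{eqn:23d}) to expand $\cf_w(\eta_\mu)$ as an alternating sum, over $\pi \in \Int(n)$, of products of $M_\mu$-coefficients indexed by the blocks of $\pi$. Every $\pi$ with $|\pi| \geq 2$ has all blocks of size strictly less than $n$, and each factor $\cf_{w|J}(M_\mu)$ can in turn be rewritten via (\ref{eqn:23c}) as a polynomial in $\eta$-coefficients of words of length at most $|J| < n$, which agree for $\mu$ and $\nu$ by the inductive hypothesis; so these multi-block terms contribute identically to $\cf_w(\eta_\mu)$ and to $\cf_w(\eta_\nu)$. The remaining single-block contribution is $\mu(Z^w)$, which property ($\eta$DM2) rewrites as $\mu(Z^{w_1}) \cdots \mu(Z^{w_p})$; each $\mu(Z^{w_i})$ matches $\nu(Z^{w_i})$ by the same transfer from $\eta$-coefficients to moments applied to words of length $|w_i| < n$, so the single-block contributions also agree. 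This closes the induction.

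The proof is largely organizational; the one subtlety to monitor is the shuttle between $\eta$-coefficients (natural for the induction) and moments (natural for ($\eta$DM2)), but the triangular structure of (\ref{eqn:23c}) and (\ref{eqn:23d}) in word length makes this translation free of circularity. I do not expect any deeper obstacle.
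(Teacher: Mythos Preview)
Your argument is correct. It differs from the paper's route, though. The paper works directly at the level of moments rather than $\eta$-coefficients: first it notes that ($\eta$DM1) forces $\mu(Z^w)=\nu(Z^w)=0$ for non-mixed-alternating $w$, then ($\eta$DM2) reduces the mixed-alternating case to the alternating case, and finally for an alternating word $w=(*,1)^m$ it expands $\mu(Z^w)$ and $\nu(Z^w)$ via (\ref{eqn:23c}) over $\Int(2m)$ and matches the two sums term by term. The key observation that makes this work without induction is that when $\pi\in\Int(2m)$ has all blocks of even cardinality, each restriction $(*,1)^m\mid J_i$ is again an alternating word $(*,1)^{d_i}$, so hypothesis~(2) applies directly to every factor; blocks of odd cardinality are killed by Lemma~\ref{lemma:29}. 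Your inductive approach trades this one combinatorial observation for a mechanical length induction together with the triangular shuttle between (\ref{eqn:23c}) and (\ref{eqn:23d}); it is a bit longer but perfectly sound, and has the mild advantage of not needing to notice the ``alternating restricts to alternating'' fact.
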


\begin{proof}  If $w \in \cWplus$  is not
mixed-alternating then $\mu (Z^w) = \nu (Z^w) = 0$ because $\mu$ and $\nu$ satisfy 
 ($\eta$DM1).  Thus it suffices to verify that 
$\mu (Z^w) = \nu (Z^w)$ for mixed-alternating words $w$. In fact, it 
suffices to prove this equality
when $w$ is alternating. Indeed, suppose for the moment that the equality has been proved for alternating words and
 let $w \in \cWplus$ be a mixed-alternating
word.  Consider the canonical factorization $w = w_1 \cdots w_p$ 
indicated in (\ref{eqn:27b}).
We have
\begin{align*}
\mu (Z^w) 
& = \mu (Z^{w_1}) \cdots \mu (Z^{w_p})
     \mbox{  (by ($\eta$DM2) for $\mu$)}                 \\
& = \nu (Z^{w_1}) \cdots \nu (Z^{w_p})
     \mbox{  (by assumption on alternating moments)}     \\
& = \nu ( Z^w )
     \mbox{  (by ($\eta$DM2) for $\nu$).} 
\end{align*}

We conclude the proof by showing that 
$\mu (Z^w  ) = \nu ( Z^w  )$ for every alternating word $w$. By symmetry, it suffices to verify  that $\mu (  (Z^{*}Z)^m ) = \nu (  (Z^{*}Z)^m )$ for every $m\in\mathbb N$
 Fix $m$ and write $\mu (  (Z^{*}Z)^m  )$ and 
$\nu (  (Z^{*}Z)^m  )$ as sums indexed by $\Int (2m)$, 
in the way indicated in  (\ref{eqn:23c}).  We show 
 that for every $\pi \in \Int (2m)$, the terms indexed 
by $\pi$ in the two sums (for $\mu$ and for $\nu$) are 
equal to each other.  If $\pi$ has a block $B$ of odd 
cardinality, then the terms we are looking at are both 
equal to $0$ because they include the factors 
$\cf_{w | B} ( \eta_{\mu} )$ and respectively
$\cf_{w | B} ( \eta_{\nu} )$, and these factors are zero
by Lemma \ref{lemma:29}.
If all the blocks of $\pi$ are even,  we write 
$\pi = \{ J_1, \ldots , J_k \}$ where the intervals 
$J_1, \ldots , J_k$ are listed in increasing order, and where 
$| J_1 | = 2 d_1, \ldots , | J_k | = 2 d_k$ for some 
$d_1, \ldots , d_k \in \bN$.  The terms indexed by $\pi$ in 
the two sums we consider are then
\begin{equation}  \label{eqn:210a}
\prod_{i=1}^k \cf_{ (*,1)^{d_i} } ( \eta_{\mu} )
\mbox{ and respectively }
\prod_{i=1}^k \cf_{ (*,1)^{d_i} } ( \eta_{\nu} )
\end{equation}
where we used the fact that $(*,1)^m | J_i = (*,1)^{d_i}$, 
for $1 \leq i \leq k$.  The two products in 
(\ref{eqn:210a}) are indeed equal by assumption (2) in the statement.
\end{proof}

$ $

\begin{proof}  [Proof of Theorem \emph{\ref{thm:28}}]
Suppose first that $\mu$ is $\eta$-diagonal. 
We verify that it satisfies ($\eta$DM1) and ($\eta$DM2). 
The hypothesis on $\mu$ says in particular 
that $\cf_{w} ( \eta_{\mu} ) = 0$ for every $w \in \cWplus$
which is not mixed-alternating.  To prove ($\eta$DM1),  we 
must show that $\cf_{w} ( M_{\mu} ) = 0$ for every such word.  
This argument is carried precisely as in the proof of 
Lemma \ref{lemma:29}, with the roles of $M_{\mu}$ and 
$\eta_{\mu}$ being reversed and with   (\ref{eqn:23c})
in place of (\ref{eqn:23d}). The reader will have no difficulty 
verifying the details.

In order to show that $\mu$ also satisfies ($\eta$DM2),
fix a mixed-alternating word $w \in \cWplus$, with canonical 
factorization $w = w_1 \cdots w_p$ as in (\ref{eqn:27b}).
Set $n = |w| = |w_1| + \cdots + |w_p|$, and let $\rho_0$ be 
the partition of $\{ 1, \ldots , n \}$ into intervals 
$J_1, \ldots , J_p$ (written in increasing order) with lengths
$|J_1| = |w_1|, \ldots , |J_p| = |w_p|$.  Given a partition $\pi$ of 
$\{ 1, \ldots , n \}$, we  write $\pi \leq \rho_0$
if every block $B$ of $\pi$ is 
contained in one of the blocks $J_1, \ldots , J_p$ of $\rho_0$.
(This relation is usually called the \emph{reverse refinement 
order} on partitions.)

Next, we use  (\ref{eqn:23c}) to express the coefficient
 $\cf_w ( M_{\mu} )=\mu (Z^w)$ as a sum indexed by $\Int (n)$. 
The special structure of the coefficients of $\eta_{\mu}$ implies
that a partition $\pi \in \Int (n)$ has a zero
contribution to that sum unless $\pi \leq \rho_0$.  It is 
immediate that the partitions 
$\pi \in \Int (n)$ satisfying $\pi \leq \rho_0$ are in natural 
bijective correspondence to tuples of partitions 
$( \pi_1, \ldots , \pi_p )$ where 
$\pi_1 \in \Int (J_1), \ldots , \pi_p \in \Int (J_p)$. This 
correspondence is such that for 
$\pi \leftrightarrow ( \pi_1, \ldots , \pi_p)$ we have
\[
\cf_{w; \pi} ( \eta_{\mu} )  
= \cf_{w_1 ; \pi_1} ( \eta_{\mu} ) \cdots 
\cf_{w_p ; \pi_p} ( \eta_{\mu} ).
\]
These observations lead to the formula
\[
\mu (Z^w) =  \sum_{ \begin{array}{c}
{\scriptstyle \pi_1 \in \Int (J_1), \ldots, } \\
{\scriptstyle  \pi_p \in \Int (J_p)} 
\end{array} }  
\cf_{w_1 ; \pi_1} ( \eta_{\mu} ) \cdots 
\cf_{w_p ; \pi_p} ( \eta_{\mu} )
= \prod_{i=1}^p \Bigl( \, \sum_{\pi_i \in \Int (J_i)}
\cf_{w_i ; \pi_i} ( \eta_{\mu} ) \,  \Bigr) .
\]
In the latter product, one more application of 
 (\ref{eqn:23c}) identifies 
\[
\sum_{\pi_i \in \Int (J_i)} 
\cf_{w_i ; \pi_i} ( \eta_{\mu} ) 
= \mu (Z^{w_i}),   1 \leq i \leq p,
\]
thus implying the desired conclusion that 
$\mu ( Z^w ) = \prod_{i=1}^p \mu (Z^{w_i})$.

Conversely, assume now that $\mu$ 
satisfies conditions ($\eta$DM1) and ($\eta$DM2).  We show that it is $\eta$-diagonal by an indirect argument: we construct
an $\eta$-diagonal distribution $\nu \in \Dalg (1,*)$ and then prove that $\mu=\nu$.  The distribution $\nu$ is defined by
specifying its $\eta$-series (see Remark \ref{rem:24}), namely 
$\cf_{ w } ( \eta_{\nu} )=\cf_{ w } ( \eta_{\mu} )$ if $w$ is alternating
and $\cf_{ w } ( \eta_{\nu} )=0$ otherwise. To prove that $\mu=\nu$ we show that
$\mu$ and $\nu$ satisfy the hypothesis of 
Lemma \ref{lemma:210}.  Indeed, both $\mu$ and $\nu$ satisfy 
($\eta$DM1) and ($\eta$DM2): $\mu$ does so by 
hypothesis, while $\nu$ does so because it is $\eta$-diagonal and 
by virtue of the implication (a) $\Rightarrow$ (b) proved 
above.  On the other hand, if $w\in\cWplus$ is an alternating word, the equality 
$\cf_w ( \eta_{\mu} ) = \cf_w ( \eta_{\nu} )$ is true by the definition of $\nu$.
This concludes the proof of the theorem.
\end{proof}

\begin{remark}  \label{rem:211}
Let $( \cA , \varphi )$ be a noncommutative probability space
(that is, $\cA$ is a unital algebra over $\bC$, 
$\varphi : \cA \to \bC$ is a linear functional, and $\varphi (1) = 1$), 
and let $a_1, a_2\in\cA$.  Recall \cite{SW1997} that $a_1, a_2$ 
are said to be 
{\em Boolean independent} provided that, given positive integers
 $n,p_1,\dots,p_n$ and indices $i_1, \ldots , i_n \in \{ 1,2 \}$
such that $i_k\ne i_{k+1}$ for $i=1,\dots,n-1$, the following identity is satisfied:
$$\varphi (a_{i_1}^{p_1} \cdots a_{i_n}^{p_n} )
= \varphi (a_{i_1}^{p_1} ) \cdots \varphi ( a_{i_n}^{p_n} ).$$

Now consider the noncommutative probability space 
$( \bC \langle Z, Z^{*} \rangle , \mu )$, where 
$\mu \in \Dalg (1,*)$ is $\eta$-diagonal.
Condition ($\eta$DM2) of Theorem \ref{thm:28}
can be restated as saying that $ZZ^{*}$ and $Z^{*}Z$ are Boolean 
independent in $( \bC \langle Z, Z^{*} \rangle , \mu )$.
\end{remark}

\begin{remnotation}   \label{rem:211b}
Another relevant fact concerning an $\eta$-diagonal $*$-dis\-tri\-bu\-tion 
$\mu$ concerns the individual  $\eta$-series of $ZZ^{*}$ 
and $Z^{*}Z$ in the noncommutative probability space 
$( \bC \langle Z, Z^{*} \rangle , \mu )$.
If $a$ is an element in a noncommutative probability space 
$( \cA , \varphi )$, then its moment series and $\eta$-series 
$M_a, \eta_a \in \bC [[z]]$ are defined as in Definition \ref{def:23} 
but using moments in place of $*$-moments: first we set
$M_a (z) = \sum_{n=1}^{\infty} \varphi (a^n) z^n$,
and then define
\[
\eta_a (z) = M_a (z) / ( 1+ M_a (z) ) \in \bC [[z]] .
\]
The coefficients of $M_a$ and $\eta_a$ are related to each other
via summations over interval partitions which are analogous to those 
shown in (\ref{eqn:23c}), (\ref{eqn:23d}) (and are derived the same way, by 
starting from the algebraic relations satisfied by the series 
themselves).   We explicitly record here the analogue of 
(\ref{eqn:23d}):
\begin{equation}  \label{eqn:211a}
\mathrm{Cf}_{n}(\eta_a)
= \sum_{\rho \in \Int (n)} (-1)^{1 + | \rho |}
\prod_{B \in \rho} \varphi (a^{ |B| }) , \quad n\in \bN ,
\end{equation}
where (by analogy with Notation \ref{def:22}(2)) we use the
notation $\cf_n : \bC [[z]] \to \bC$ for the linear map 
that extracts the $n$th coefficient of a series in
$\bC [[z]]$.

When applied to the  elements  $ZZ^{*}$ and $Z^{*}Z$ from 
the framework of Theorem \ref{thm:28}, these observations yield 
the following result.
\end{remnotation}

\begin{proposition}   \label{prop:212}
Suppose that $\mu \in \Dalg (1,*)$ is an $\eta$-diagonal distribution,
and let $( \alpha_n )_{n=1}^{\infty}$,
$( \beta_n )_{n=1}^{\infty}$ be its determining sequences
\emph{(}as introduced in Definition \emph{\ref{def:26}(2))}.  Then in the 
noncommutative probability space 
$( \bC \langle Z, Z^{*} \rangle , \mu )$, the elements 
$ZZ^{*}$ and $Z^{*}Z$ have $\eta$-series given by
\[
\eta_{ { }_{ZZ^*} } (z) = \sum_{n=1}^{\infty} \alpha_n z^n
\mbox{ and }
\eta_{ { }_{Z^*Z} } (z) = \sum_{n=1}^{\infty} \beta_n z^n.
\]
\end{proposition}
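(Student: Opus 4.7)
The plan is to derive both sides of the identity $\cf_n(\eta_{ZZ^*}) = \alpha_n$ (where $\alpha_n = \cf_{(1,*)^n}(\eta_\mu)$ by definition) as sums indexed by interval partitions, and match them term-by-term. I would invoke (\ref{eqn:211a}) to express $\cf_n(\eta_{ZZ^*})$ as a sum over $\Int(n)$ involving the moments $\mu((ZZ^*)^{|B|})$, and invoke (\ref{eqn:23d}) applied to the alternating word $w = (1,*)^n$ (of length $2n$) to express $\alpha_n$ as a sum over $\Int(2n)$ involving the $*$-moments $\cf_{(1,*)^n \vert B}(M_\mu)$. The proof then reduces to showing that in the latter sum, only certain ``doubled'' partitions contribute, and that these are in natural bijective correspondence with $\Int(n)$.

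The first key step uses Theorem \ref{thm:28}: if $\pi \in \Int(2n)$ has some block $B$ of odd cardinality, then the restricted word $(1,*)^n \vert B$ has odd length, hence is not mixed-alternating; so ($\eta$DM1) forces $\cf_{(1,*)^n \vert B}(M_\mu) = \mu(Z^{(1,*)^n \vert B}) = 0$, killing the corresponding term. The second step is the combinatorial observation that a partition $\pi \in \Int(2n)$ has all blocks of even cardinality if and only if it is a ``doubled'' partition $\tilde\rho$, obtained from some $\rho \in \Int(n)$ by replacing each block $\{a, \ldots, b\}$ of $\rho$ with $\{2a-1, 2a, \ldots, 2b-1, 2b\}$. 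This follows from a straightforward induction: the first block of $\pi$ starts at position $1$ (odd); if it has even length it ends at an even position, so the next block starts at an odd position, and so on. The map $\rho \mapsto \tilde\rho$ is a bijection onto this subset and preserves the number of blocks.

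For a doubled partition $\tilde\rho$, each block $\tilde B$ corresponding to $B \in \rho$ satisfies $(1,*)^n \vert \tilde B = (1,*)^{|B|}$, so $\cf_{(1,*)^n \vert \tilde B}(M_\mu) = \mu((ZZ^*)^{|B|})$. Collecting these observations, the sum from (\ref{eqn:23d}) collapses to
\[
\alpha_n = \sum_{\rho \in \Int(n)} (-1)^{1+|\rho|} \prod_{B \in \rho} \mu((ZZ^*)^{|B|}) = \cf_n(\eta_{ZZ^*}),
\]
the last equality being (\ref{eqn:211a}). The argument for $Z^*Z$ is symmetric, using the alternating word $(*,1)^n$ in place of $(1,*)^n$. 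The main obstacle is the combinatorial parity observation identifying ``all blocks of even length'' with ``doubled''; everything else is routine bookkeeping with interval partitions.
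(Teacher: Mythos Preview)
Your proof is correct and follows essentially the same approach as the paper: both arguments use the ``doubling'' bijection $\rho \mapsto \widehat{\rho}$ between $\Int(n)$ and the all-even-block partitions in $\Int(2n)$, invoke condition ($\eta$DM1) from Theorem~\ref{thm:28} to kill the remaining terms, and then match the two interval-partition expansions (\ref{eqn:211a}) and (\ref{eqn:23d}) term by term.
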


\begin{proof}
By symmetry, it suffices to prove the first formula.  
Equation (\ref{eqn:211a}) yields
\begin{equation}  \label{eqn:212a}
\cf_{n}(\eta_{ { }_{ZZ^{*}} } )
= \sum_{\rho \in \Int (n)} (-1)^{1 + | \rho |}
\prod_{B \in \rho} \mu (  (ZZ^{*})^{|B|} ),\quad n \in \bN.
\end{equation}
For the remainder of the proof, we fix $n \in \bN$ and  verify that the right-hand side of (\ref{eqn:212a})
is equal to $\alpha_n$.

For every partition 
$\rho = \{ J_1, \ldots , J_k \} \in \Int (n)$, 
with intervals $J_1, \ldots , J_k$ written in increasing
order, we define a \emph{doubled} partition
$\widehat{ \rho } = \{ \widehat{J_1}, \ldots, 
\widehat{J_k}  \} \in \Int (2n)$.
This is the interval partition uniquely determined by the 
requirement that $\widehat{J_1}, \ldots, \widehat{J_k}$
come in increasing order and satisfy
$| \widehat{J_i} | = 2 |J_i|$ for $1 \leq i \leq k$.
With this notation, it is easily seen that the 
right-hand side of  (\ref{eqn:212a}) can be written as
\[
\sum_{\rho \in \Int (n)} (-1)^{1 + | \widehat{\rho} |}
\cf_{ (1,*)^n ; \widehat{\rho} } ( M_{\mu} ).
\]
This, however, is the same as 
\[
\sum_{\pi \in \Int (2n)} (-1)^{1 + | \pi |}
\cf_{ (1,*)^n ; \pi } ( M_{\mu} ).
\]
Indeed, due to the special structure of the $*$-moments of 
$\mu$ described in Theorem \ref{thm:28}, all the  
terms in the latter sum, corresponding to partitions 
$\pi \in \Int (2n)$ which are not of the form 
$\widehat{\rho}$, are equal to $0$. We conclude that 
\[
\cf_{n}(\eta_{ { }_{ZZ^{*}} } )
= \sum_{\pi \in \Int (2n)} (-1)^{1 + | \pi |}
\cf_{ (1,*)^n ; \pi } ( M_{\mu} )
= \cf_{ (1,*)^n } ( \eta_{\mu} )
= \alpha_n,
\]
where  (\ref{eqn:23d}) is used in the second equality.
\end{proof}

\section{$R$-transforms and $R$-diagonal $*$-distributions}

The  discussion in Section 2 is better
put into perspective when one compares it to the parallel
(more elaborate) free probability framework.
In the free probability framework, instead of $\eta$-series one works with 
{\em $R$-transforms}, and one has the concept of what it means 
for a $*$-distribution $\mu \in \Dalg (1,*)$ to be {\em $R$-diagonal}.
The class of $R$-diagonal $*$-distributions is in fact 
rather well-studied in the free probability literature.
Here we review some of their basic properties,
mostly following   \cite[Lecture 15]{NS2006}, and
with emphasis on the facts we need in the present work.

\begin{remark}    \label{rem:31}
On a combinatorial level, 
switching to the world of free probability comes to using
non-crossing partitions instead of interval partitions.
We recall that a \emph{crossing} of a partition $\pi$ of $\{1,\dots n\}$ consists 
of integers $1\le a<b<c<d\le n$ such that the set $\{a,c\}$ is 
contained in a block of $\pi$ and $\{b,d\}$ is contained in
a different block of $\pi$.  A partition is \emph{non-crossing} if it has no crossings. 
We denote by $NC(n)$ the collection of all non-crossing 
partitions of $\{1,\dots n\}$.  

Similarly to the lattice $\text{Int}(n)$, the
set $NC(n)$ is partially ordered by reverse refinement. 
The minimal and maximal elements with respect to this partial
order are denoted by $0_n$ (the partition of $\{ 1, \ldots , n \}$
into $n$ singleton blocks) and respectively $1_n$ (the partition of 
$\{ 1, \ldots , n \}$ into one block).

We record a notation and an elementary observation needed
in the final part of this section. For every 
$n \in \bN$, we denote 
by $NCE(2n)$ the collection of all the partitions $\pi \in NC(2n)$
with the property that every block of $\pi$ has even cardinality.
Observe that if $\pi \in NCE(2n)$ and if 
$V = \{ k_1 < k_2 < \cdots < k_{2m} \}$ is a block of $\pi$,
then the numbers $k_1, k_2, \ldots , k_{2m}$ have alternating 
parities.  Indeed, for every $i=1,\dots, 2m-1$, the set 
$\{ k_i +1 , k_i + 2, \ldots , k_{i+1} - 1 \}$ 
is a union of blocks of $\pi$, and hence has even cardinality, 
which implies that $k_{i+1}$ is of opposite parity from $k_i$.

For a discussion of other elementary facts
concerning $NC(n)$, we refer to \cite[Lecture 9]{NS2006}.
\end{remark}

\begin{remark}    \label{def:32}
({\em Review of $R$-transforms.})

(1) The {\em $R$-transform} of a $*$-distribution 
$\mu \in \Dalg (1,*)$ is the series $R_{\mu} \in \ncsercon$, whose 
coefficients are uniquely determined by the requirement that they 
relate to the $*$-moments of $\mu$ by the formula
\begin{equation}  \label{eqn:32a}
\cf_{w} (M_{\mu}) = \sum_{\pi \in NC(n)} 
\cf_{w ; \pi} (R_{\mu}),\quad  
w \in \cWplus \mbox{ and } n=|w|.
\end{equation}
Equation (\ref{eqn:32a}) is the free probabilistic counterpart 
of (\ref{eqn:23c}).  It is 
often referred 
to as the \emph{moment-cumulant formula} for free cumulants (see 
\cite[Lecture 11]{NS2006} for an explanation of this terminology). 

One can also define the series $R_{\mu}$ by an equation 
involving the series $M_{\mu}$ and $R_{\mu}$ themselves (rather than 
 their coefficients, as in (\ref{eqn:32a})).
More precisely, $R_{\mu}$ is the unique series in $\ncsercon$
that satisfies the functional equation
\begin{equation}  \label{eqn:32b}
R_\mu(z(1+M_\mu(z,z^*)),z^*(1+M_\mu(z,z^*))=M_\mu(z,z^*)
\end{equation}
(see \cite[Corollary 16.16]{NS2006}).
This is the free probabilistic analogue of 
 (\ref{eqn:23a}), but
now we only have an implicit functional equation rather
than an explicit formula describing the series $R_{\mu}$.

An easy inductive argument in the moment-cumulant 
formula (\ref{eqn:32a}) shows
that one can recover $M_{\mu}$ from $R_{\mu}$ and that 
(as in  Remark \ref{rem:24}) we have a bijection
\[
\Dalg (1,*) \ni \mu \mapsto R_{\mu} \in \ncsercon .
\]
In other words, one can define a distribution 
$\mu \in \Dalg (1,*)$ by specifying its $R$-transform.

\vspace{6pt}

(2) Consider the framework of Remark \ref{rem:211}, where 
we discussed the moment series and $\eta$-series 
$M_a, \eta_a \in \bC [[z]]$ associated to an element $a$ 
in a noncommutative probability 
space $( \cA , \varphi )$.  In that framework one also has
an {\em $R$-transform} associated with 
the element $a \in \cA$. This is the series $R_a \in \bC [[z]]$ 
that relates to $M_a$ by
\begin{equation}   \label{eqn:32d}
\cf_{n} (M_{a}) = \sum_{\pi \in NC(n)} 
\cf_{n ; \pi} (R_{a}),\quad  n \in \bN,
\end{equation}
and
\begin{equation}   \label{eqn:32e}
R_a ( z(1+ M_a (z)) ) = M_a (z).
\end{equation}
These formulas are analogous to
(\ref{eqn:32a}) or (\ref{eqn:32b}), respectively.
For a detailed discussion of the algebraic aspects of $R$-transforms
(covering both the series $R_{\mu}$ in part (1) of this remark
and the series $R_a$ in part (2)), see  \cite[Lecture 16]{NS2006}.
\end{remark}

\begin{definition} \label{def:33}
(1) A $*$-distribution $\mu\in\Dalg(1,*)$ is said to be 
{\em $R$-diagonal} when $\cf_w ( R_{\mu } ) = 0$ for every word 
$w \in \cWplus$ that is not alternating.

(2) If $\mu$ is $R$-diagonal, its $R$-transform is
thus of the form
\begin{align}    \label{eqn:33a}
R_{\mu} (z,z^{*}) = \sum_{n=1}^{\infty} \alpha_n (zz^{*})^n
+ \sum_{n=1}^{\infty} \beta_n (z^{*}z)^n,
\end{align}
with $\alpha_n, \beta_n \in \bC$ for $n \in \bN$.  The
sequences $( \alpha_n )_{n=1}^{\infty}$ and 
$( \beta_n )_{n=1}^{\infty}$ are called the 
{\em determining sequences} of $\mu$.
\end{definition}

\begin{remark}   \label{rem:34}
The concept of an $\eta$-diagonal $*$-distribution from 
 Section 2 obviously parallels the one of an $R$-diagonal 
$*$-distribution, with the $\eta$-series in place of the 
$R$-transform.  The basic properties of 
$\eta$-diagonal distributions proved in Section 2 are the 
counterparts of known facts concerning 
$R$-diagonal distributions, as noted below.

(1) Remark \ref{rem:211} is the 
Boolean counterpart of \cite[Corollary 15.11, p. 244]{NS2006}: 
if $\mu$ is $R$-diagonal, then $Z^{*}Z$ and $ZZ^{*}$ are freely
independent in the noncommutative probability space 
$(\mathbb{C}\langle Z,Z^*\rangle,\mu)$.  

(2) Theorem \ref{thm:28} is the counterpart 
of \cite[Theorem 1.2.1]{NSS2001} which describes $R$-diagonal 
distributions in terms of their $*$-moments. 

(3) Proposition \ref{prop:212}
is analogous to \cite[Proposition 15.6, p. 241]{NS2006} which 
gives a precise formula, first found in \cite{KS2000},
for the coefficients of the one-variable $R$-transforms
$R_{ {ZZ^*} }$ and $R_{ {Z^{*}Z} }$ in terms of the 
determining sequences
$(\alpha_n)_{n=1}^{\infty}$ and $(\beta_n)_{n=1}^{\infty}$ from 
(\ref{eqn:33a}).  This formula is more elaborate than 
the relation found 
in Proposition \ref{prop:212} for $\eta$-diagonal elements. It states that
\begin{equation}    \label{eqn:34a}
{\rm Cf}_n ( R_{{ZZ^*} } )= \sum _{
{ \pi = \{ V_1, \ldots , V_k \} \in NC(n)} }
\alpha_{|V_1|} \beta_{|V_2|} \cdots \beta_{|V_k|},
\end{equation}
where the blocks of $\pi$ are arranged so $1\in V_1$. The coefficients of 
$R_{{Z^{*}Z} }$ are obtained by interchanging the roles of 
$\alpha$ and $\beta$ in these formulas. 
For example, the first three coefficients of
$R_{{ZZ^{*}} }$ are $\alpha_1, \alpha_2 + \alpha_1 \beta_1$
and $\alpha_3 + 2 \alpha_2 \beta_1 + \alpha_1 \beta_2 
+ \alpha_1 \beta_1^2$. 

Equations (\ref{eqn:34a}) lead to the following observation: an easy 
induction on $n$ 
(where one singles out the terms indexed by the partition 
$1_n \in NC(n)$ on the right-hand sides of 
these equations) shows that the determining sequences
$( \alpha_n )_{n=1}^{\infty}$ and $(\beta_n )_{n=1}^{\infty}$
can be retrieved from the coefficients of 
$R_{ {ZZ^{*}} }$ and $R_{ {Z^{*}Z} }$. 
Hence the $R$-diagonal
$*$-distribution $\mu \in \Dalg (1,*)$ is completely determined 
by these $R$-transforms.

In Section 6  we require a reformulation of 
(\ref{eqn:34a}) in terms of operations with series rather than individual 
coefficients.  This reformulation is 
given in the next proposition.  The formulas 
(\ref{eqn:36a}) bear a striking 
resemblance to the functional equation (\ref{eqn:32e}) of the $R$-transform.
  In fact, (\ref{eqn:36a}) collapse to
$R_{{ZZ^{*}} } (z) = R_{{Z^{*}Z} } (z) = M_a (z)$ in the special case
$\alpha_n = \beta_n$, $n \in \bN$, in which case we can
take $a = b$.
\end{remark}

\begin{proposition}      \label{prop:36}
Let $\mu \in \Dalg (1,*)$ be an $R$-diagonal $*$-distribution 
with determining sequences $( \alpha_n )_{n=1}^{\infty}$ and
$( \beta_n )_{n=1}^{\infty}$.  Suppose we are given some elements 
$a$ and $b$ in a noncommutative probability space $( \cA , \varphi )$ 
such that
\[
R_a (z) = \sum_{n=1}^{\infty} \alpha_n z^n \text{ and }   
R_b (z) = \sum_{n=1}^{\infty} \beta_n z^n.   
\]
Then, in the noncommutative probability space 
$( \bC \langle Z, Z^{*} \rangle , \mu )$, we have
\begin{equation}  \label{eqn:36a}
R_{ {ZZ^{*}} } (z) 
  = R_a ( z( 1 + M_b (z)) ) \text{ and }                                 
R_{ {Z^{*}Z} } (z) 
  = R_b( z( 1 + M_a (z))) .
\end{equation}
\end{proposition}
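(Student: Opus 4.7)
My plan is to prove the formula for $R_{ZZ^{*}}$; the formula for $R_{Z^{*}Z}$ will then follow by the symmetry $\alpha \leftrightarrow \beta$, $a \leftrightarrow b$ already built into the setup (indeed, switching the roles of the two variables swaps $ZZ^{*}$ with $Z^{*}Z$ in the $R$-diagonality relation \eqref{eqn:33a}). The starting point is the explicit expression \eqref{eqn:34a} for $\mathrm{Cf}_n(R_{ZZ^{*}})$, and the goal is to recognize this as the $n$-th coefficient of $R_a(z(1+M_b(z)))$.

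First, I would reorganize the sum in \eqref{eqn:34a} according to the structure of the outer block $V_1$ containing $1$. Given $\pi = \{V_1, \ldots, V_k\} \in NC(n)$ with $V_1 = \{1 = j_1 < j_2 < \cdots < j_p\}$, the non-crossing property forces the remaining blocks to lie inside the $p$ ``gaps'' carved out by $V_1$, namely the intervals $\{j_i+1, \ldots, j_{i+1}-1\}$ for $1 \leq i \leq p-1$ and the terminal interval $\{j_p+1, \ldots, n\}$. Writing $s_i$ for the size of the $i$-th such gap (so $s_1 + \cdots + s_p = n - p$, with each $s_i \geq 0$), and letting $\pi|_i$ denote the restriction of $\pi$ to the $i$-th gap, one gets a bijection between partitions $\pi$ with $1 \in V_1$ and $|V_1|=p$ and tuples $(j_1, \ldots, j_p; \pi|_1, \ldots, \pi|_p)$ with $\pi|_i \in NC(s_i)$.

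Next, I would invoke the moment-cumulant formula \eqref{eqn:32d} applied to the element $b$: for each $s \geq 0$,
\[
\varphi(b^s) = \sum_{\sigma \in NC(s)} \prod_{V \in \sigma} \beta_{|V|} = \mathrm{Cf}_s(1 + M_b(z)),
\]
with the convention $NC(0)$ consisting of the empty partition (so the $s=0$ value is $1$). Consequently the sum over gap-fillings collapses to $\prod_{i=1}^p \mathrm{Cf}_{s_i}(1+M_b(z))$, and summing over tuples $(s_1,\ldots,s_p)$ of non-negative integers with $s_1 + \cdots + s_p = n-p$ produces exactly $\mathrm{Cf}_{n-p}\bigl((1+M_b(z))^p\bigr)$.

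Putting it all together and summing over $p$:
\[
\mathrm{Cf}_n(R_{ZZ^{*}}) = \sum_{p=1}^n \alpha_p \, \mathrm{Cf}_{n-p}\bigl((1+M_b(z))^p\bigr) = \mathrm{Cf}_n\Bigl( \sum_{p \geq 1} \alpha_p \bigl(z(1+M_b(z))\bigr)^p \Bigr) = \mathrm{Cf}_n\bigl( R_a(z(1+M_b(z))) \bigr),
\]
which holds for every $n \in \bN$ and yields the first equality of \eqref{eqn:36a}. I do not expect a serious obstacle here: the only subtle point is the non-crossing bookkeeping in the reorganization step, where one must be careful that the block $V_1$ only surrounds gap intervals to its right (between and after its elements), never to the left of $1$ — which is automatic because $1$ is the minimum of $\{1,\ldots,n\}$.
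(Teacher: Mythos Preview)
Your proof is correct and follows essentially the same approach as the paper's: both start from \eqref{eqn:34a}, reorganize the sum over $NC(n)$ according to the block $V_1$ containing $1$, use the non-crossing gap decomposition, and apply the moment--cumulant formula for $b$ to identify the $\beta$-products with moments $\varphi(b^{s_i})$. The paper phrases the final matching via an explicit bijection between tuples $(k_1,\ldots,k_m)$ and subsets $S\ni 1$, while you work directly with coefficients of $(1+M_b)^p$, but this is only a cosmetic difference.
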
 

\begin{proof} 
The argument  is analogous to the proof of (\ref{eqn:32b}) (see, for instance, the 
proof of \cite[Theorem 16.15]{NS2006}).
For the reader's convenience, we describe the basic idea.
  
By symmetry, it suffices to prove
the first equality in (\ref{eqn:36a}). We show that the coefficients of $z^n$
in the series
$R_{{ZZ^{*}} } (z)$ and $R_a ( z( 1 + M_b (z)) )$ are equal to each other for every $n\in\mathbb N$. The formal series expansion
$$R_a ( z( 1 + M_b (z)) )
= \sum_{m=1}^{\infty} \alpha_m \bigl( z(1 + M_b (z)) \bigr)^m$$
yields
$$
\cf_n  \bigl( R_a ( z( 1 + M_b (z)) ) \bigr) 
= \sum_{m=1}^n \alpha_m   \cf_{n-m} \bigl( (1+ M_b)^m \bigr),
\quad n\in\mathbb N.
$$
Recall that the coefficients of
$1 + M_b$ are moments of $b$, and expand $(1+ M_b)^m$, to obtain
\begin{equation}   \label{eqn:36b}
\cf_n [  R_a (  z(1+M_b) (z) )  ]
= \sum_{m=1}^n   \sum_{\begin{array}{c}
{\scriptstyle k_1, \ldots , k_m \geq 0}  \\
{\scriptstyle \text{with }  k_1 + \cdots + k_m = n-m}
\end{array}  } \alpha_m 
 \varphi ( b^{k_1} ) \cdots \varphi ( b^{k_m} )
 .
\end{equation}
On the other hand,  (\ref{eqn:34a}) yields
\[
\cf_n ( R_{ {ZZ^{*}} } ) = \sum_{m=1}^n
 \sum_{\begin{array}{c}
\scriptstyle S \subseteq \{ 1, \ldots , n \} 
\text{ with}
\\
\scriptstyle |S| = m \text{ and }1\in S
\end{array}  }  
\sum_{ \begin{array}{c}
{\scriptstyle \pi = \{ V_1, \ldots , V_p \} \in NC(n) } \\
{\scriptstyle \text{with } V_1 = S}
\end{array} } \alpha_m  \beta_{|V_2|} \cdots \beta_{|V_p|},\quad n\in\mathbb N.
\]
For a fixed set 
$S = \{ s_1, \ldots , s_m \} \subseteq \{ 1, \ldots , n \}$
with $1 = s_1 < s_2 < \cdots < s_m \leq n$, the collection 
of non-crossing partitions 
$\{ \pi \in NC(n) : S \mbox{ is a block of $\pi$} \}$
is naturally identified with the Cartesian product 
\[
NC( s_2 -s_1 -1) \times \cdots \times
NC( s_m -s_{m-1} -1) \times NC ( n - s_m ),
\]
in a way that converts the sum 
\[
\sum_{ \begin{array}{c}
{\scriptstyle \pi = \{ V_1, \ldots , V_p \} \in NC(n) } \\
{\scriptstyle \text{with } V_1 = S}
\end{array} } \ \beta_{|V_2|} \cdots \beta_{|V_p|}
\]
into the product
\begin{equation}   \label{eqn:36c}
\prod_{\ell=1}^{m}\left(  \sum_{\pi_\ell \in NC(s_{\ell+1} - s_{\ell} -1)} 
         \prod_{V \in \pi_\ell} \beta_{|V|}  \right),
\end{equation}
where we set $s_{m+1}=n+1$.
An application of the moment-cumulant formula (\ref{eqn:32d}) 
 shows that $\prod_{V \in \pi_\ell} \beta_{|V|}=\varphi(b^{s_{\ell+1}-s_\ell-1})$.
(See the  proof of \cite[Theorem 16.15]{NS2006} for more details.) We conclude that 
\begin{equation}\label{eqn:36d}\cf_n ( R_{{ZZ^{*}} } )=
\sum_{m=1}^n \   \sum_{ 
\begin{array}{c}
{\scriptstyle S \subseteq \{ 1, \ldots , n \} \text{ with}}  \\
{\scriptstyle |S| = m \text{ and }1\in S}
\end{array}  } \alpha_m \prod_{\ell=1}^m 
\varphi ( b^{s_{\ell+1} - s_\ell - 1} ),\quad n\in\mathbb N.
\end{equation}
Finally,  observe that for every fixed 
$m \in \{ 1, \ldots , n \}$ there is a natural bijection
between tuples $( k_1, \ldots , k_m ) \in ( \bN \cup \{ 0 \} )^m$
with $k_1 + \cdots + k_m = n-m$ (on the one hand) and subsets 
$1 \in S \subseteq  \{ 1, \ldots , n \}$ with $|S| = m$
(on the other), given by the formula
\[
(k_1, \ldots , k_m) \mapsto
S = \{ 1, k_1 + 2, k_1 + k_2 + 3, \ldots ,
k_1 + \cdots + k_{m-1} + m  \}.
\]
The inner sums on the right-hand 
sides of  (\ref{eqn:36b}) and (\ref{eqn:36d}) are 
identified term by term via this bijection, and this concludes the 
proof.
\end{proof}

In the remainder of this section we discuss the $R$-diagonal 
$*$-distributions that satisfy the KMS condition.  This is a 
special case of the class of 
$*$-distribution studied in \cite{Sh1997} (see, for instance
\cite[Remark 2.10]{Sh1997}).  The best known example of a KMS 
$R$-diagonal distribution is the one where, in the framework 
of the next definition, one sets 
$\alpha_1 = \lambda$, $\beta_1 = 1$ and
$\alpha_n = \beta_n = 0$ for $n \geq 2$; this is called the
$\lambda$-\emph{circular distribution}, and is studied in detail 
in  \cite[Section 4]{Sh1997}.

\begin{definition}    \label{def:36}
Let $\mu \in \Dalg (1,*)$ be an $R$-diagonal $*$-distribution 
with determining sequences $( \alpha_n )_{n=1}^{\infty}$ and
$( \beta_n )_{n=1}^{\infty}$, and let $t$ be a positive real number.
We say that $\mu$ satisfies the {\em KMS condition} with 
parameter $t$ if 
\begin{equation}   
\alpha_n = t \beta_n , \quad n \in \bN .
\end{equation}
\end{definition}

The following result shows that the KMS condition is a 
generalization of the trace property, where the latter property
occurs for the value $t = 1$ of the parameter.

\begin{proposition}   \label{prop:37}
Let $\mu \in \Dalg (1,*)$ be an $R$-diagonal $*$-distribution,
let $t$ be a positive real number, and suppose that $\mu$
satisfies the KMS condition with parameter $t$.  Denote by 
$$U_t : \bC \langle Z, Z^{*} \rangle 
   \to \bC \langle Z, Z^{*} \rangle$$
the unique unital algebra homomorphism such that 
\[
U_t (Z) = tZ \mbox{ and } U_t (Z^{*}) = \frac{1}{t} Z^{*}.
\]
Then 
\begin{equation}   \label{eqn:37a}
\mu ( P  Q ) = \mu ( Q  U_t (P)), \quad 
 P,Q \in \bC \langle Z, Z^{*} \rangle .
\end{equation}
\end{proposition}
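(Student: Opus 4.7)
By bilinearity, the identity (\ref{eqn:37a}) reduces to the case of monomials $P = Z^v$ and $Q = Z^w$ with $v, w \in \cWplus$. For $v = (\ell_1, \ldots, \ell_k)$, one has $U_t(Z^v) = t^{\delta(v)} Z^v$ where $\delta(v) := \#\{i : \ell_i = 1\} - \#\{i : \ell_i = *\}$; hence the goal becomes
\[
\mu(Z^{vw}) = t^{\delta(v)}\, \mu(Z^{wv}), \qquad v, w \in \cWplus .
\]
I plan to prove this by expanding both sides via the moment-cumulant formula (\ref{eqn:32a}) and setting up a coefficient-by-coefficient bijection between the two sums.

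The key tool is the cyclic rotation $\sigma : j \mapsto j + |w| \pmod{n}$ (with values in $\{1, \ldots, n\}$), where $n = |v| + |w|$. Applying $\sigma$ blockwise gives a bijection $\pi \mapsto \sigma(\pi)$ on $NC(n)$, using the standard fact that cyclic rotations preserve non-crossing partitions (they correspond to rotations of $n$ points arranged on a circle). Writing $u = vw$ and $u' = wv$, a direct check shows that for every block $B = \{b_1 < \cdots < b_{2m}\}$ of $\pi$, the sub-word $u'|\sigma(B)$ equals the cyclic shift of $u|B$ by $p$ positions, where $p := |B \cap \{1, \ldots, |v|\}|$. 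Since $\mu$ is $R$-diagonal, only partitions with all $u|B$ alternating contribute, and cyclic shifts of an alternating word are alternating, so the contributing partitions match up under $\sigma$.

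The KMS condition enters in the comparison of cumulant values: if $u|B$ has length $2m$ and type $(1,*)$, its cumulant is $\alpha_m$; a cyclic shift by even $p$ preserves the type, while a shift by odd $p$ flips it to type $(*,1)$, whose cumulant is $\beta_m = t^{-1}\alpha_m$. A short case check across the four combinations of type and parity of $p$ shows that each block contributes a multiplicative factor $t^{c_B}$, where $c_B$ is the number of $1$'s minus the number of $*$'s in the letters $(u_{b_1}, \ldots, u_{b_p})$ coming from the $v$-part of $u$. Summing over the blocks of any contributing $\pi$, one gets $\sum_{B \in \pi} c_B = \delta(v)$, because each position $j \in \{1, \ldots, |v|\}$ lies in exactly one block and contributes $+1$ or $-1$ according to whether $v_j$ equals $1$ or $*$. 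Hence each term in the expansion of $\mu(Z^{vw})$ equals $t^{\delta(v)}$ times the matching term in $\mu(Z^{wv})$, yielding the identity upon summation. The main obstacle is the exponent bookkeeping in the case analysis, but once the cyclic $p$-shift description of $u'|\sigma(B)$ is in hand this is routine.
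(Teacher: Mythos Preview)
Your proposal is correct and rests on the same core idea as the paper's proof: expand both sides via the moment--cumulant formula and match terms using the fact that a cyclic rotation of $\{1,\ldots,n\}$ induces a bijection on $NC(n)$, with the KMS relation $\alpha_m = t\beta_m$ supplying the power of $t$ when a block's restricted word flips type.

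The one organizational difference is that the paper first reduces to the case $|v|=1$ (observing that the set of words $v$ for which the identity holds is closed under concatenation), so that only a single cyclic shift $\gamma_{n+1}^{-1}$ is needed and exactly one block---the one containing position $1$---can change type, picking up a single factor of $t$ or $t^{-1}$. You instead treat a general word $v$ in one shot, rotating by $|w|$ positions; this forces you to track the exponent $c_B$ block by block and then sum to $\delta(v)$. Your route is a bit more direct but carries more bookkeeping; the paper's reduction trades that bookkeeping for an extra (easy) preliminary step. Either way the argument goes through. One small omission: you restrict to $v,w\in\cWplus$, but the bilinear reduction also requires the trivial cases $P=1$ or $Q=1$; these follow immediately since an $R$-diagonal $\mu$ vanishes on any monomial $Z^v$ with $\delta(v)\neq 0$.
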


\begin{proof}  Both sides of  (\ref{eqn:37a}) are 
bilinear in $P$ and $Q$, so it suffices to check the equation
when both $P$ and $Q$ are monomials. Using the notation $Z^\varnothing=1$, we must show that
\begin{equation}   \label{eqn:37b}
\mu ( Z^v  Z^w ) = \mu ( Z^w \, U_t ( Z^v ) ),\quad
 v,w \in \cWplus \cup \{ \varnothing \}.
\end{equation}  
Equivalently, we must show that the set
\[
\cS =\{ v \in \cWplus \cup \{ \varnothing \} : 
\mu ( Z^v  Z^w ) = \mu ( Z^w  U_t ( Z^v ) ),
 w \in \cWplus \cup \{ \phi \}  \}
\]
is equal to $\cWplus \cup \{ \varnothing \}$.  The set $\mathcal S$ is clearly closed under concatenation and contains $\varnothing$.  Therefore, it suffices to show that $\{1,*\}\subset \mathcal S$. In other words it suffices to prove that
\begin{equation}   \label{eqn:37c}
\mu ( Z  Z^w ) = t  \mu ( Z^w  Z )
\mbox{ and }
\mu ( Z^{*}  Z^w ) = \frac{1}{t}  \mu ( Z^w  Z^{*} ),
\quad w \in \cWplus \cup \{ \varnothing \}.
\end{equation}
We  only prove the first equality in (\ref{eqn:37c});
the verification of the second one is analogous.  The case 
$w = \varnothing$ follows from the fact (incorporated in the definition 
of an $R$-diagonal distribution) that $\mu (Z) = 0$.  For the 
remainder part of the proof we fix a word 
$w  = ( \ell_1, \ldots , \ell_n ) \in \cWplus$, for 
which we prove that 
$\mu ( Z  Z^w ) = t \, \mu ( Z^w  Z )$.
Observe that
\[
\mu ( Z  Z_w ) = \cf_{w_1} (M_{\mu}), \quad 
\mu ( Z_w  Z ) = \cf_{w_2} (M_{\mu}),  
\]
where $w_1 := (1, \ell_1, \ldots , \ell_n )$ and 
$w_2 := ( \ell_1, \ldots , \ell_n, 1 )$.  It is convenient to 
view $w_1$ and $w_2$ as functions from $\{ 1, \ldots , n+1 \}$ 
to $\{ 1, * \}$ and to record the fact that 
\begin{equation}   \label{eqn:37d}
w_2 = w_1 \circ \gamma_{n+1},
\end{equation}
where $\gamma_{n+1}$ is the cyclic permutation 
$1 \mapsto 2 \mapsto \cdots \mapsto n+1 \mapsto 1$ of 
$\{ 1, \ldots , n+1 \}$.

For every partition 
$\pi = \{ V_1, \ldots , V_k \} \in NC(n+1)$ we denote
by $\gamma_{n+1}^{-1}( \pi)$ the partition (still in 
$NC(n+1)$) whose blocks are
$\gamma_{n+1}^{-1} (V_1), \ldots , \gamma_{n+1}^{-1} (V_k)$.
The desired conclusion
$\cf_{w_1} ( M_{\mu} ) = t  \cf_{w_2} ( M_{\mu} )$ is obtained  from
\begin{equation}   \label{eqn:37e}
\cf_{w_1 ; \pi} ( R_{\mu} ) 
= t  \cf_{w_2 ; \gamma_{n+1}^{-1}( \pi)} ( R_{\mu} ),
\quad \pi \in NC(n+1),
\end{equation}  
using the moment-cumulant formula. Indeed, sum both sides of 
(\ref{eqn:37e}) over $\pi \in NC(n+1)$   and invoke  
(\ref{eqn:32a}) applied to the words $w_1$ and $w_2$. The sums thus obtained 
are precisely $\cf_{w_1} (M_{\mu})$ on the left side and
$t \cf_{w_2} (M_{\mu})$ on the right.

Thus, it remains to prove (\ref{eqn:37e}). Fix a partition 
$\pi = \{ V_1, \ldots , V_k \}\in NC (n+1)$ such that $1\in V_1$.
Then $\gamma_{n+1}^{-1} ( \pi) = \{ W_1, \ldots , W_k \}$,
where $W_j = \gamma_{n+1}^{-1} (V_j)$, $1 \leq j \leq k$, 
and  $n+1\in W_1$.  It follows from (\ref{eqn:37d}) that
$w_1 \mid V_j = w_2 \mid W_j \in \cWplus$ if $2\le j\le k$, and thus
\begin{equation}   \label{eqn:37f} 
\cf_{w_1 \mid V_j} (R_{\mu}) = \cf_{w_2 \mid W_j} (R_{\mu}),
\quad 2 \leq j \leq k
\end{equation}
For the remaining block, we show that 
\begin{equation}   \label{eqn:37g} 
\cf_{w_1 \mid V_1} (R_{\mu}) 
= t \cdot \cf_{w_2 \mid W_1} (R_{\mu}).
\end{equation}
Indeed, suppose that $V_1 =  \{ j_1, \ldots , j_m \}$
with $1 = j_1 < j_2 < \cdots < j_m$, and therefore  
$W_1 = \{ j_2 -1 , \ldots , j_m - 1, n+1 \}$.  Both sides of 
(\ref{eqn:37g}) are $0$ if $m$ is odd or if $m$ is even but
$w_1 | V_1$ is not an alternating word. 
If $m$ is even and $w_1|V_1$ is alternating, then we find 
that $w_1 | V_1 = \{ 1,* \}^{m/2}$ and 
$w_2 | V_2 = \{ *,1 \}^{m/2}$, which implies that 
$\cf_{w_1 \mid V_1} (R_{\mu}) = \alpha_{m/2}$ and 
$\cf_{w_2 \mid W_1} (R_{\mu}) = \beta_{m/2}$. In this case, 
(\ref{eqn:37g}) follows from the KMS hypothesis.

Finally, for the partition $\pi$ fixed in the preceding 
paragraph we write:
\begin{align*}
\cf_{w_1 ; \pi }  (R_{\mu})
& = \cf_{w_1 \mid V_1} (R_{\mu})
\prod_{j=2}^k \cf_{w_1 \mid V_j} (R_{\mu})         \\
& = t  \cf_{w_2 \mid W_1} (R_{\mu})
\prod_{j=2}^k \cf_{w_2 \mid W_j} (R_{\mu})     
\mbox{ (by (\ref{eqn:37f}) and (\ref{eqn:37g})) }  \\
& = t  \cf_{w_2 ; \gamma_{n+1}^{-1} ( \pi )} (R_{\mu}),
\end{align*}
thus concluding the proof of (\ref{eqn:37e}).
\end{proof}

\begin{remark}    \label{rem:38}
The converse of Proposition \ref{prop:37} is also true.  More 
precisely, every $R$-diagonal distribution $\mu \in \Dalg (1,*)$ 
that satisfies (\ref{eqn:37a}) for some $t\in (0,+\infty)$ must 
also satisfy the KMS condition for the same value of $t$. To see 
this, let  $( \alpha_n )_{n=1}^{\infty}$ and 
$( \beta_n )_{n=1}^{\infty}$ be the determining sequences of 
$\mu$. Equation (\ref{eqn:37a}) yields, in particular, the identity 
\begin{equation}  \label{eqn:38a}
\mu ( (ZZ^{*})^n ) 
= t \mu ( (Z^{*}Z)^n  ), \quad n\in\mathbb N.
\end{equation}
This identity implies $\alpha_n =t\beta_n$, $n\in\mathbb N$,
by induction on $n$. For the induction step one invokes the 
moment-cumulant formula in order to expand both
sides of (\ref{eqn:38a}) as sums over $NC(2n)$;
then the action of the cyclic permutation $\gamma_{2n}^{-1}$ on 
$NC(2n)$ can be used in the same way as it was done in the proof 
of Proposition \ref{prop:37}.
\end{remark}

\section{The framework of $\cD_c (1,*)$ and $\cD_c (k)$,
BBP bijections}

We now introduce the analytic framework which is 
of interest for the present paper. 

\begin{definition}  \label{def:41}
(1) Let $( \cA , \varphi )$ be a $C^{*}$-probability space
(which means that $\cA$ is a unital $C^{*}$-algebra,  
$\varphi : \cA \to \bC$ is a positive linear functional, and 
 $\varphi (1) = 1$), and let $a\in\cA$.  The {\em $*$-distribution of $a$} is the
functional $\mu \in \Dalg (1,*)$  determined by the 
requirement that 
\[
\mu (Z^w ) =  \varphi  (a^w ),\quad w\in\mathcal{W}^+.
\]

(2) We denote by $\cD_c (1,*)$ the set of all 
elements of $\Dalg (1,*)$ that are equal to the $*$-distribution of some element in a $C^{*}$-probability space.

(3) Free additive (respectively, multiplicative) convolution is a binary operation on $\cD_c (1,*)$  
denoted by $\boxplus$ (respectively, $\boxtimes$). This operation is uniquely determined by the following property: given elements $a, a '$ in a $C^{*}$-probability space 
$( \cA , \varphi )$ such that $\{ a ,a^{*} \}$ is free from 
$\{ a', ( a')^{*} \}$, the $*$-distribution of $a+a'$ (respectively, $aa'$) is the free additive (respectively, multiplicative) convolution of the $*$-distributions of $a$ and $a'$.  See \cite[Lectures 5 and 7]{NS2006} for more details.

(4) An element $\mu\in\Dc (1,*)$ is said to be $\boxplus$-\emph{infinitely 
divisible} if for every $n\in\mathbb N$ there exists 
$\mu_n\in \Dc (1,*)$ such that 
\[
\mu= \underbrace{\mu_n \boxplus \cdots \boxplus \mu_n}_{\mbox{ $n$  times}}.
\]
The set of all $\boxplus$-infinitely divisible distributions in $\Dc (1,*)$ 
is denoted by $\Dcinfdiv (1,*)$.
\end{definition}

The main result of this section is the following theorem.  The 
series $R_\nu$ and $\eta_\mu$ appearing in the statement of the 
theorem are as defined in Sections 2 and 3.

\begin{theorem}   \label{thm:42}
{\em (BBP bijection on $\cD_c (1,*)$.) }
There exists a bijection 
$$\bB_{(1,*)}:\Dc (1,*)\to\Dcinfdiv (1,*),$$
determined by the requirement that
\begin{equation}    \label{eqn:42a}
R_{\bB_{(1,*)}(\mu)}=\eta_\mu, \quad \mu \in \Dc (1,*).
\end{equation}
More precisely, for every $\mu \in \cD_c (1,*)$ there exists a unique $*$-distribution 
$\nu \in \Dcinfdiv (1,*)$ such that 
$R_{\nu} = \eta_{\mu}$, and we define $\bB_{(1,*)} ( \mu ) := \nu$.
\end{theorem}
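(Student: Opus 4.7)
The plan is to reduce the statement to the BBP bijection for joint distributions of two selfadjoint variables, proved in \cite{BN2008}. The bridge is the $*$-algebra isomorphism $\Phi : \ncpoltwo \to \ncpolcon$ given by $\Phi(X_1) := (Z+Z^{*})/2$ and $\Phi(X_2) := (Z-Z^{*})/(2i)$, where $X_1, X_2$ are declared selfadjoint. Pullback along $\Phi^{-1}$ induces a bijection $\Phi_{*} : \Dalg(2) \to \Dalg(1,*)$ at the algebraic level, and this restricts to a bijection between $\cD_c(2)$ and $\cD_c(1,*)$: indeed, given $a$ in a $C^{*}$-probability space, its ``real and imaginary parts'' $a_1 := (a+a^{*})/2$ and $a_2 := (a-a^{*})/(2i)$ are selfadjoint, and conversely $a_1 + i a_2$ recovers $a$ from any such pair. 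The same correspondence, applied to the defining relation of infinite divisibility, shows that $\Phi_{*}$ restricts further to a bijection $\Dcinfdiv(2) \to \Dcinfdiv(1,*)$.

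Next, I would verify the dictionary for $R$-transforms, $\eta$-series, and $\boxplus$ under $\Phi_{*}$. For $\boxplus$: the pairs $\{a_1, a_2\}$ and $\{b_1, b_2\}$ are free in $(\cA,\varphi)$ if and only if $\{a, a^{*}\}$ and $\{b, b^{*}\}$ are free, because they generate the same pair of $*$-subalgebras; since $(a+b)_1 = a_1 + b_1$ and $(a+b)_2 = a_2 + b_2$, free additive convolution corresponds on the two sides. For the $R$-transform and $\eta$-series, the key observation is that both are defined purely by algebraic manipulations of the moment series, namely (\ref{eqn:32a})--(\ref{eqn:32b}) for the former and (\ref{eqn:23a}) for the latter. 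Both manipulations commute with any linear change of non-commuting indeterminates; consequently, under the induced substitution $z \leftrightarrow x_1 + i x_2$ and $z^{*} \leftrightarrow x_1 - i x_2$ at the series level, the series $R_{\Phi_{*}(\rho)}$ and $\eta_{\Phi_{*}(\rho)}$ are identified with $R_\rho$ and $\eta_\rho$ respectively, for every $\rho \in \Dalg(2)$.

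Granting this dictionary, I would invoke the main theorem of \cite{BN2008}, specialized to $k=2$, to produce a bijection $\bB_2 : \cD_c(2) \to \Dcinfdiv(2)$ uniquely characterized by $R_{\bB_2(\rho)} = \eta_\rho$. Defining $\bB_{(1,*)} := \Phi_{*} \circ \bB_2 \circ \Phi_{*}^{-1}$, the characterizing identity $R_{\bB_{(1,*)}(\mu)} = \eta_\mu$ and the bijection onto $\Dcinfdiv(1,*)$ are both immediate from the first two paragraphs; uniqueness of $\nu$ with $R_\nu = \eta_\mu$ follows from injectivity of $\nu \mapsto R_\nu$ on $\Dalg(1,*)$, noted in Remark \ref{def:32}(1). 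The substantive obstacle — inherited from \cite{BN2008} rather than produced here — is the existence half of $\bB_2$: given $\rho \in \cD_c(2)$, one must exhibit an honest $\boxplus$-infinitely divisible pair of selfadjoint elements in a $C^{*}$-probability space whose two-variable $R$-transform equals $\eta_\rho$. This is where the ``combinatorial and analytic methods'' alluded to in the introduction enter; the present theorem then translates that construction into the $*$-distribution language via $\Phi_{*}$.
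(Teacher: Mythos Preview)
Your proposal is correct and follows essentially the same approach as the paper: reduce to the $k=2$ case of the Belinschi--Nica bijection $\bB_2$ from \cite{BN2008} via the real/imaginary part change of coordinates, verify that this change of variables intertwines moment series, $R$-transforms, $\eta$-series, and $\boxplus$, and then conjugate $\bB_2$ by it. The only cosmetic differences are that the paper packages the change of variables as maps $D$ and $\widetilde{D}$ (Proposition~\ref{prop:46} and Definition~\ref{def:47}), proves the $R$/$\eta$ dictionary via multilinearity of cumulant functionals (Lemma~\ref{lemma:48}), and checks the $\boxplus$-compatibility via $R$-transforms (Lemma~\ref{lemma:49}) rather than directly via equality of generated subalgebras as you do.
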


\begin{defrem}   \label{def:43}
({\em Framework of $\cD_c (k)$.})
We  reduce Theorem \ref{thm:42}  to an analogous 
theorem proved in \cite{BN2008} for the space, denoted 
by $\cD_c (2)$, of joint distributions of pairs of 
selfadjoint elements in a $C^{*}$-probability space.  
The passage from $\cD_c (1,*)$ to $\cD_c (2)$ is natural, and 
essentially amounts to the change of variables
\[
(a, a^{*} ) \mapsto 
\left( \frac{a+a^{*}}{2}, \frac{a-a^{*}}{2i} \right),   
\]
for $a$ in a $C^{*}$-probability space $( \cA , \varphi )$. In order to clarify this idea, we  
review briefly the framework 
of $\cD_c (k)$. Fix $k\in\mathbb N$.

(1) We denote by $\ncpolk$ the algebra of  polynomials 
in the non-commuting indeterminates $X_1,\ldots,X_k$. 

(2)  Let $( \cA , \varphi )$ be a $C^{*}$-probability space and let
$b_1, \ldots , b_k\in\cA$ be selfadjoint.  The {\em joint
distribution} of $b_1, \ldots , b_k$ is the linear functional 
$\lambda : \ncpolk \to \bC$ which is determined by the requirement 
that $\lambda (1) = 1$ and 
\[
\lambda ( X_{i_1} \cdots X_{i_n} ) = \varphi ( b_{i_1} \cdots b_{i_n} ),
  \quad n \in \bN, 
i_1, \ldots , i_n \in \{ 1, \ldots , k \} .
\]

(3) We denote by $\cD_c (k)$ the set of all linear 
functionals $\lambda : \ncpolk \to \bC$ that can arise as joint 
distributions of $k$-tuples of selfadjoint elements in some 
$C^*$-probability space.

(4) Free additive convolution is a binary operation on $\cD_c (k)$  
denoted
\footnote{It is customary to always denote free additive 
convolution by ``$\boxplus$''.  The setting in which the symbol 
$\boxplus$ is used should be clear, in each case, from the context.}
by 
$\boxplus$. This operation is uniquely determined by the following 
property: given selfadjoint elements $a_1,\dots,a_k$ and 
$b_1,\dots,b_k$ in a $C^{*}$-probability space $( \cA , \varphi )$ 
such that $\{a_1,\dots,a_k\}$ is free from $\{b_1,\dots,b_k\}$, the 
joint distribution of $a_1+b_1,\dots,a_k+b_k$ is the free additive 
convolution of the joint distributions of $a_1,\dots,a_k$ and 
$b_1,\dots,b_k$. 
 The  concept of $\boxplus$-infinite 
divisibility in $\cD_c (k)$ is introduced as in Definition \ref{def:41}(4).
The set of $\boxplus$-infinitely 
divisible  distributions in $\cD_c (k)$ is denoted by $\Dcinfdiv (k)$.

(5) We denote by $\ncserk$ the space of those
formal power series with complex coefficients in $k$ 
non-commuting indeterminates $x_1, \ldots , x_k$ whose 
constant term is equal to $0$.  We denote by $\cf_{ (i_1, \ldots , i_n) } (f)$ 
the coefficient of $x_{i_1} \cdots x_{i_n}$ in a series $f \in \ncserk$.
Every joint distribution $\lambda \in \cD_c (k)$ has 
a moment series $M_{\lambda}$, an $R$-transform $R_{\lambda}$ and an 
$\eta$-series $\eta_{\lambda}$.  These are elements of $\ncserk$, 
and their definitions are analogous to Definitions 
\ref{def:23} and \ref{def:32}.  A detailed description of these 
power series and of the relations between their coefficients can be 
found in \cite[pp. 14-17]{BN2008}.
\end{defrem}

The proof of  Theorem \ref{thm:42} will be reduced to the following 
result from \cite{BN2008} (see also \cite{BP1999} for the case $k=1$).

\begin{theorem}   \label{thm:44}
{\em (BBP bijection on $\cD_c (k)$.)}
Let $k$ be a positive integer.
There exists a bijection $\bB_k : \Dc (k) \to\Dcinfdiv (k)$,
determined by the requirement that
\begin{equation}    \label{eqn:44a}
R_{\bB_k (\lambda)}=\eta_{\lambda}, \, \quad \lambda \in \Dc (k).
\end{equation}
More precisely, for every $\lambda \in \cD_c (k)$ there exists a unique $\nu \in \Dcinfdiv (k)$  such that 
$R_{\nu} = \eta_{\lambda}$, and we define $\bB_k ( \lambda ) := \nu$.
\hfill $\square$
\end{theorem}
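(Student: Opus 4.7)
The plan is to prove the theorem by establishing the set-equality
\[
\{\eta_\mu : \mu \in \cD_c(k)\} = \{R_\nu : \nu \in \Dcinfdiv(k)\}
\]
inside $\ncserk$. Combined with the injectivity of both maps $\mu \mapsto \eta_\mu$ and $\nu \mapsto R_\nu$, this equality will immediately yield the claimed bijection $\bB_k$.

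First I would establish injectivity of the two maps. The map $\mu \mapsto \eta_\mu$ on $\cD_c(k)$ is injective by the $k$-variable analogue of Remark~\ref{rem:24}, since $\eta_\mu$ determines $M_\mu$ via $M_\mu = \eta_\mu(1-\eta_\mu)^{-1}$ and $M_\mu$ determines $\mu$. The map $\nu \mapsto R_\nu$ on $\cD_c(k)$ is injective because the multivariate moment-cumulant formula (the analogue of (\ref{eqn:32a})) expresses each coefficient of $M_\nu$ as the corresponding coefficient of $R_\nu$ plus a polynomial in strictly lower-order coefficients of $R_\nu$; this triangular system inverts explicitly. Consequently, once the displayed set-equality is known, specifying $\eta_\lambda$ for any $\lambda \in \cD_c(k)$ picks out a unique $\nu \in \Dcinfdiv(k)$ with $R_\nu = \eta_\lambda$, so $\bB_k(\lambda) := \nu$ is a well-defined bijection.

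The substantive content is the set-equality itself. I would prove it through a L\'evy-Hin\v{c}in-type characterization, identifying both classes with those series $f \in \ncserk$ admitting a common representation in terms of a positive linear functional on an auxiliary extension of $\ncpolk$. For $\eta$-series the representation follows essentially from the defining relation $M_\mu = \eta_\mu(1-\eta_\mu)^{-1}$, since the positivity of $\mu$ (coming from its $C^*$-realization) translates into a positivity constraint on $\eta_\mu$ encodable by such a functional. For $R$-transforms of $\boxplus$-infinitely divisible distributions, one proves the analogous representation in two directions: every $\nu \in \Dcinfdiv(k)$ has an $R$-transform admitting positive data, obtained by passing to a limit in the rescaled free cumulants of the $n$-th convolution roots $\nu_n$ (where $R_{\nu_n} = R_\nu/n$); and conversely every $f \in \ncserk$ with such data is realizable as $R_\nu$ for a genuine $\nu \in \Dcinfdiv(k)$, via an explicit operator model on a full Fock space producing a $k$-tuple of self-adjoint operators whose norms are controlled by the representing datum.

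The principal obstacle is this L\'evy-Hin\v{c}in representation for multivariate $\boxplus$-infinite divisibility. For $k=1$ the representation is the main result of \cite{BP1999}, established via the Nevanlinna theory of analytic self-maps of the upper half-plane and the structure theory of Cauchy transforms. For arbitrary $k$ the one-variable analytic function theory is unavailable, and the multivariate argument in \cite{BN2008} instead realizes each $\nu \in \Dcinfdiv(k)$ concretely as the joint distribution of a $k$-tuple of self-adjoint operators built on a full Fock space, with creation, annihilation and gauge-operator weights dictated by the positive representing datum; this operator-algebraic and combinatorial construction is the substitute for the one-variable analytic methods and is the heart of the proof.
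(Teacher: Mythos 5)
The paper does not prove Theorem \ref{thm:44} at all --- it is imported without proof from \cite{BN2008} (with \cite{BP1999} covering $k=1$), which is why the statement ends in a $\square$; so there is no in-paper argument to compare yours against. Your outline is nonetheless a faithful summary of how the cited source proceeds: establish the set-equality $\{\eta_\mu : \mu \in \cD_c(k)\} = \{R_\nu : \nu \in \Dcinfdiv(k)\}$ via a common L\'evy--Hin\v{c}in-type positivity characterization of both classes of series, with a full Fock space operator model standing in for the one-variable Nevanlinna-theoretic methods of \cite{BP1999}, and then combine this with the injectivity of $\mu \mapsto \eta_\mu$ and $\nu \mapsto R_\nu$ (both of which you justify correctly by the triangular moment--cumulant relations). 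As a standalone proof your text is only a skeleton --- the set-equality, which you rightly identify as carrying essentially all the content, is described rather than established, and in particular the positivity characterization of multivariate $\eta$-series and the convergence of the rescaled cumulants of the $n$-th convolution roots would each need a genuine argument --- but nothing in the outline is wrong, and it accurately reflects the structure of the proof in \cite{BN2008} that this paper relies on.
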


\begin{remark}   \label{rem:45}
We only need Theorem \ref{thm:44} for $k=1$ and $k=2$. 
When 
$k=1$, the space $\cD_c (1)$ is naturally identified with the space 
$\cP_c$ of compactly supported Borel probability measures on $\bR$.  
Indeed, given $b=b^{*}$ in a $C^{*}$-probability space
$( \cA , \varphi )$, 
Definition \ref{def:43}(2) produces a linear functional 
$\lambda : \bC \langle x_1 \rangle \to \bC$ which becomes, via 
the  Riesz representation theorem, a Borel probability measure 
supported on the spectrum of $b$.  The original BBP bijection from 
\cite{BP1999} was defined on $\cP_c$ (and on the larger set $\cP$ 
of all Borel probability measures on $\mathbb R$).  In Section 6, 
we will simply talk about $\bB ( \sigma )$ for $\sigma \in \cP_c$. 
In other words if $\lambda$ denotes the functional in $\cD_c (1)$ 
corresponding to $\sigma$, then $\bB ( \sigma ) \in \cP_c$  
denotes the probability measure corresponding to $\bB_1 ( \lambda )$.

The following result creates bijections $C$ and $D$ that we use in conjunction with the case $k=2$ of Theorem \ref{thm:44}. (The letters $C$ and $D$ are meant to suggest  
\emph{complexification} and \emph{decomplexification}.)
The proof is immediate, 
and therefore omitted.
\end{remark}

\begin{proposition}   \label{prop:46}
There exists a bijection
$D : \cD_c (1,*) \to \cD_c (2 )$ defined as follows.
Given  $\mu \in \cD_c ( 1,*)$ that is the $*$-distribution of
an element $a$ in a $C^{*}$-probability space 
$( \cA , \varphi )$, $D(\mu)$ is the joint distribution of the pair
$$
\left(\frac{a+a^{*}}{2}, \frac{a-a^{*}}{2i}\right).
$$
The inverse of $D$ is the bijection 
$C : \cD_c (2) \to \cD_c (1,*)$ defined as follows. Given  $\lambda \in \cD_c (2)$ that is the joint distribution of a pair
 $b_1, b_2$ of selfadjoint elements in a 
$C^{*}$-probability space $( \cA , \varphi )$, $C(\lambda)$ is the 
$*$-distribution of $b_1+ib_2$.
\hfill $\square$
\end{proposition}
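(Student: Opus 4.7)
The plan is to reduce the statement to a direct verification with four ingredients: well-definedness of $D$, well-definedness of $C$, correctness of the codomains, and mutual inversion. No analytic input is required beyond the fact that if $a$ lies in a $C^{*}$-probability space $(\cA,\varphi)$ then so do $(a+a^{*})/2$, $(a-a^{*})/(2i)$, and if selfadjoint $b_1,b_2$ lie in $(\cA,\varphi)$ then so does $b_1+ib_2$; these observations already place the candidate outputs in $\cD_c(2)$ and $\cD_c(1,*)$ respectively.

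For well-definedness of $D$, I would fix $\mu\in\cD_c(1,*)$ realized as the $*$-distribution of $a\in(\cA,\varphi)$ and set $b_1:=(a+a^{*})/2$, $b_2:=(a-a^{*})/(2i)$. The key step is to show that the joint distribution of $(b_1,b_2)$ depends only on $\mu$. This follows from the purely algebraic observation that the substitution $X_1\mapsto (Z+Z^{*})/2$, $X_2\mapsto (Z-Z^{*})/(2i)$ expands every monomial $X_{i_1}\cdots X_{i_n}\in\ncpoltwo$ as a complex linear combination of $*$-monomials $Z^w\in\ncpolcon$; applying $\varphi$ after evaluation at $a$ therefore writes each joint moment $\varphi(b_{i_1}\cdots b_{i_n})$ as the same linear combination of the $*$-moments $\mu(Z^w)$, which is intrinsic to $\mu$.

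Well-definedness of $C$ is the mirror argument: the inverse substitution $Z\mapsto X_1+iX_2$, $Z^{*}\mapsto X_1-iX_2$ expresses every $*$-monomial $Z^w$ as a linear combination of monomials in $X_1,X_2$, so for selfadjoint $b_1,b_2$ the $*$-moments $\varphi((b_1+ib_2)^w)$ are determined by the joint distribution $\lambda$ of $(b_1,b_2)$. The mutual inversion $C\circ D=\mathrm{id}$ and $D\circ C=\mathrm{id}$ then follows from the classical decomposition identities: for any $a$ one has $a=(a+a^{*})/2+i\,(a-a^{*})/(2i)$, giving $C(D(\mu))=\mu$, and for any selfadjoint $b_1,b_2$ one has $((b_1+ib_2)+(b_1+ib_2)^{*})/2=b_1$ and $((b_1+ib_2)-(b_1+ib_2)^{*})/(2i)=b_2$, giving $D(C(\lambda))=\lambda$.

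There is no substantive obstacle; the one point worth care is that the substitutions in the first two steps are carried out in the appropriate polynomial algebras rather than merely at the level of elements, so that the resulting linear combinations of moments are canonical and independent of any particular operator realization. This is precisely why the authors declare the proof immediate and omit it.
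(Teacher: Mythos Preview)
Your proposal is correct and is exactly the routine verification the paper has in mind; the authors explicitly say the proof is immediate and omit it, so there is nothing to compare against beyond your own expansion of that omission. Your care in noting that the substitutions should be carried out at the level of the polynomial algebras $\ncpolcon$ and $\ncpoltwo$ (so that well-definedness is manifest) is the only point requiring any thought, and you handled it properly.
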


\begin{defrem}   \label{def:47}
In addition to the transformations $C$ and $D$, the proof of Theorem \ref{thm:42} requires the corresponding change of variables for power series. Denote by
\begin{equation}   \label{eqn:47a}
t_{1,1} = t_{1,*} = \frac{1}{2}  \mbox{ and } 
t_{2,1} = \frac{1}{2i},  t_{2,*} = - \frac{1}{2i},
\end{equation}
the coefficients of the linear transformation 
$b_1 = (a+ a^{*})/{2}$, $b_2 = (a - a^{*})/{2i}$. This 
transformation can now be written more compactly as
\begin{equation}    \label{eqn:47b}
b_i = \sum_{ \ell \in \{ 1,* \} }
 t_{i, \ell} a^{\ell},   \mbox{ for $i = 1,2$.}
\end{equation}
Using the coefficients $t_{i, \ell}$ we define a map $\widetilde{D} : 
\bC_0 \langle \langle z,z^{*} \rangle \rangle \to
\bC_0 \langle \langle x_1, x_2 \rangle \rangle$
as follows: given a series 
$f \in \bC_0 \langle \langle z,z^{*} \rangle \rangle$,
the coefficients of the  series $g = \widetilde{D} (f) \in
\bC_0 \langle \langle x_1, x_2 \rangle \rangle$
are given by
\begin{equation}   \label{eqn:47c}
\cf_{ (i_1, \ldots , i_n) } (g) 
:= \sum_{\ell_1, \ldots , \ell_n \in \{ 1,* \} }  
t_{i_1, \ell_1} \cdots  t_{i_n , \ell_n} 
\cf_{ ( \ell_1, \ldots , \ell_n ) } (f),\quad n \in \bN,i_1, \ldots , i_n \in \{ 1,2 \}.  
\end{equation}
The map $\widetilde{D}$ is clearly linear and bijective.  Its 
inverse $\widetilde{C} :
\bC_0 \langle \langle x_1, x_2 \rangle \rangle
\to \bC_0 \langle \langle z,z^{*} \rangle \rangle$ is
defined by a formula analogous to (\ref{eqn:47c}),
but with $[t_{i, \ell}]$  replaced by the inverse matrix
\[
t'_{1,1} = 1,  t'_{1,2} =  i \mbox{ and }
t'_{*,1} = 1,  t'_{*,2} = -i.
\]
\end{defrem}

\begin{lemma}  \label{lemma:48}
For every $*$-distribution 
$\mu \in \cD_c (1,*)$, we have
\begin{equation}    \label{eqn:48a}
M_{D ( \mu ) } = \widetilde{D} ( M_{\mu} ),
R_{D ( \mu ) } = \widetilde{D} ( R_{\mu} ),
\text{ and }
\eta_{D ( \mu ) } = \widetilde{D} ( \eta_{\mu} ).
\end{equation}
\end{lemma}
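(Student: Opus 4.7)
The plan is to first recognize $\widetilde{D}$ as the unique unital algebra homomorphism
$\bC \langle \langle z, z^{*} \rangle \rangle \to \bC \langle \langle x_1, x_2 \rangle \rangle$
that sends $z \mapsto t_{1,1} x_1 + t_{2,1} x_2$ and
$z^{*} \mapsto t_{1,*} x_1 + t_{2,*} x_2$ (with $\widetilde{D}(1) = 1$ on the unital extension). This identification is verified by applying the defining formula (\ref{eqn:47c}) to a monomial $z^{\ell_1} \cdots z^{\ell_n}$ and recognizing its image as the corresponding product in $\bC \langle \langle x_1, x_2 \rangle \rangle$.  In particular, $\widetilde{D}$ is multiplicative. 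Once this interpretation is in place, the three equalities in (\ref{eqn:48a}) follow from three separate short arguments.

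For the moment series, I will compute $\cf_{(i_1, \ldots, i_n)}(M_{D(\mu)})$ directly from the definitions: it equals $\varphi(b_{i_1} \cdots b_{i_n})$ with $b_i = \sum_{\ell} t_{i, \ell} a^{\ell}$. Expanding the product and using linearity of $\varphi$ produces exactly the right-hand side of (\ref{eqn:47c}) applied to $f = M_{\mu}$, which is $\cf_{(i_1, \ldots, i_n)}(\widetilde{D}(M_{\mu}))$.  The $\eta$-series identity is then an immediate consequence of the multiplicativity of $\widetilde{D}$: starting from $\eta_{\mu} = M_{\mu}(1+M_{\mu})^{-1}$ and using the first identity $\widetilde{D}(M_{\mu}) = M_{D(\mu)}$, we get $\widetilde{D}(\eta_{\mu}) = M_{D(\mu)}(1+M_{D(\mu)})^{-1} = \eta_{D(\mu)}$.

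For the $R$-transform identity, I will verify that $\widetilde{D}(R_{\mu})$ satisfies the moment-cumulant formula (\ref{eqn:32a}) for $D(\mu)$ and then invoke the uniqueness recorded in Remark \ref{def:32}.  That is, I will show
\[
\sum_{\pi \in NC(n)} \cf_{(i_1, \ldots, i_n); \pi}(\widetilde{D}(R_{\mu}))
= \cf_{(i_1, \ldots, i_n)}(M_{D(\mu)}).
\]
Since $\cf_{w; \pi}$ factors as a product over blocks of $\pi$, each factor $\cf_{(i_1, \ldots, i_n)|B}(\widetilde{D}(R_{\mu}))$ unpacks via (\ref{eqn:47c}) as a sum of products of $t_{i, \ell}$'s times a coefficient of $R_{\mu}$; distributing this expansion across all blocks of $\pi$ and interchanging the sums allows one to pull the matrix entries outside the partition sum.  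The expression then collapses to
\[
\sum_{\ell_1, \ldots, \ell_n \in \{1,*\}} t_{i_1, \ell_1} \cdots t_{i_n, \ell_n}
\sum_{\pi \in NC(n)} \cf_{(\ell_1, \ldots, \ell_n); \pi}(R_{\mu}),
\]
and applying (\ref{eqn:32a}) to $\mu$ turns the inner sum into $\cf_{(\ell_1, \ldots, \ell_n)}(M_{\mu})$; the remaining outer sum is then exactly $\cf_{(i_1, \ldots, i_n)}(\widetilde{D}(M_{\mu})) = \cf_{(i_1, \ldots, i_n)}(M_{D(\mu)})$, as desired.

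The main obstacle is this $R$-transform step, where one has to swap the order of three nested sums (over $\pi \in NC(n)$, over blocks $B \in \pi$, and over letter assignments $\ell_1, \ldots, \ell_n \in \{1,*\}$) so that the matrix $(t_{i, \ell})$ can be factored out of the partition sum.  This is purely a bookkeeping matter and reflects the multilinearity of free cumulants under the change of variables $b_i = \sum_{\ell} t_{i, \ell} a^{\ell}$.  The other two identities, as well as the initial recognition of $\widetilde{D}$ as a unital algebra homomorphism, are essentially routine.
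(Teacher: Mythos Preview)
Your proof is correct. The moment-series step is identical to the paper's. For the $R$-transform, the paper invokes the multilinearity of the free cumulant functionals $\kappa_n$ on $(\cA,\varphi)$ (citing \cite[Proposition 16.12]{NS2006}), whereas you carry out that multilinearity argument explicitly at the level of the moment-cumulant formula~(\ref{eqn:32a}); these are the same computation, yours being self-contained rather than by citation. The genuine difference is in the $\eta$-series step: the paper repeats the cumulant-multilinearity argument a third time, now with Boolean cumulant functionals, while you instead observe that $\widetilde{D}$ extends to a unital algebra homomorphism and apply it directly to the identity $\eta_{\mu} = M_{\mu}(1+M_{\mu})^{-1}$. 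Your route here is shorter and avoids importing the Boolean cumulant machinery from \cite{L2004}; the paper's route has the virtue of treating all three transforms uniformly.
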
 

\begin{proof} Suppose that  $\mu \in \cD_c (1,*)$ is the $*$-distribution of an element $a$ in a $C^{*}$-probability 
space $( \cA , \varphi )$, and set $b_i=\sum_{\ell\in\{1,*\}}t_{i,\ell}a^\ell$ for $i=1,2$. By definition, the joint distribution of $b_1, b_2$ is $D( \mu ) \in \cD_c (2)$. To verify the first identity in (\ref{eqn:48a}), fix $n \in \bN$ and 
$i_1, \ldots , i_n \in \{ 1,2 \}$, and calculate directly
\begin{align*}
\cf_{ (i_1, \ldots , i_n) } ( M_{D( \mu )} )
& = \varphi ( b_{i_1} \cdots b_{i_n} )                   \\
& = \varphi \Bigl( (t_{i_1,1} a^1 + t_{i_1,*} a^{*} )
\cdots (t_{i_n,1} a^1 + t_{i_n,*} a^{*} )  \Bigr)        \\
& = \sum_{ \ell_1, \ldots , \ell_n \in \{ 1,* \} } \
     t_{i_1, \ell_1} \cdots t_{i_n, \ell_n} \
    \varphi ( a^{\ell_1} \cdots a^{\ell_n} )             \\
& = \sum_{ \ell_1, \ldots , \ell_n \in \{ 1,* \} } \
     t_{i_1, \ell_1} \cdots t_{i_n, \ell_n} 
     \cf_{ ( \ell_1 , \ldots , \ell_n ) } (M_{\mu})   \\
& = \cf_{ (i_1, \ldots , i_n) } \widetilde{D} ( M_{\mu} ).
\end{align*}
The second equality in (\ref{eqn:48a})
follows from a similar multilinearity argument, using the fact that the $C^{*}$-probability space 
$( \cA , \varphi )$ carries a family of multilinear 
functionals $( \kappa_n : \cA^n \to \bC )_{n=1}^{\infty}$, 
called free cumulant functionals, such that
\[
   \begin{array}{l}
\cf_{ (i_1, \ldots , i_n) } ( R_{D( \mu )} )
= \kappa_n ( b_{i_1}, \ldots , b_{i_n} ),   
\quad n \in \bN ,
\, i_1, \ldots , i_n \in \{ 1,2 \} ,   \mbox{ and }   \\
                                                        \\
\cf_{ ( \ell_1 , \ldots , \ell_n ) } (R_{\mu})  
= \kappa_n ( a^{\ell_1}, \ldots , a^{\ell_n} ),   
\quad n \in \bN ,
\, \ell_1, \ldots , \ell_n \in \{ 1,* \} . 
\end{array}   
\]
(This multilinearity argument is precisely the one used
to describe the behavior of the $R$-transform under linear 
transformations  \cite[Proposition 16.12]{NS2006}.)

The third equality (\ref{eqn:48a})
follows from a similar multilinearity argument, using the Boolean
cumulant functionals $( \beta_n : \cA^n \to \bC )_{n=1}^{\infty}$ (for 
a discussion of Boolean cumulants see, for instance,
\cite[Section 4.6]{L2004}).
\end{proof}

\begin{lemma}  \label{lemma:49}
Let $D : \cD_c (1,*) \to \cD_c (2)$ be the 
bijection defined in Proposition \emph{\ref{prop:46}}. Then:
\begin{enumerate}

\item[\rm(1)] $D( \mu \boxplus \mu ' ) = D( \mu  ) \boxplus D ( \mu ' )$ for every $\mu, \mu ' \in \cD_c (1,*)$.

\item[\rm(2)] $D( \Dcinfdiv (1,*)) = \Dcinfdiv (2)$.
\end{enumerate}
\end{lemma}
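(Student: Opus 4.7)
My plan is to deduce both parts from Lemma \ref{lemma:48}, exploiting the fact that the $R$-transform linearizes $\boxplus$ in both the $\cD_c(1,*)$ and the $\cD_c(2)$ settings, together with the linearity of the auxiliary map $\widetilde{D}$.

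For part (1), I would start by recalling the general principle that for $\mu, \mu' \in \cD_c(1,*)$ one has $R_{\mu \boxplus \mu'} = R_\mu + R_{\mu'}$, and similarly in $\cD_c(2)$. Applying Lemma \ref{lemma:48} to $\mu \boxplus \mu'$ and using the linearity of $\widetilde D$ gives
\[
R_{D(\mu \boxplus \mu')} = \widetilde{D}(R_{\mu \boxplus \mu'}) = \widetilde{D}(R_\mu) + \widetilde{D}(R_{\mu'}) = R_{D(\mu)} + R_{D(\mu')} = R_{D(\mu) \boxplus D(\mu')}.
\]
Since the bijection $\cD_c(2) \ni \lambda \mapsto R_\lambda$ is injective (Theorem \ref{thm:44} implicitly uses this, but it is also a standard feature of the $R$-transform machinery in $\cD_c(2)$), the equality $R_{D(\mu\boxplus\mu')} = R_{D(\mu)\boxplus D(\mu')}$ forces $D(\mu \boxplus \mu') = D(\mu) \boxplus D(\mu')$. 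An alternative and more conceptual route would be to realize $\mu$ and $\mu'$ by free elements $a, a'$ in a common $C^*$-probability space and note that $\{a,a^*\}$ free from $\{a',(a')^*\}$ implies $\{(a+a^*)/2, (a-a^*)/(2i)\}$ free from $\{(a'+{a'}^*)/2, (a'-{a'}^*)/(2i)\}$, because the relevant elements lie in the $*$-subalgebras generated by $a$ and $a'$ respectively; either approach works.

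For part (2), I would combine part (1) with the fact that $D$ is a bijection with inverse $C$ (Proposition \ref{prop:46}). By symmetry, $C$ also satisfies $C(\lambda \boxplus \lambda') = C(\lambda) \boxplus C(\lambda')$; this can either be read off from the same multilinearity argument as in part (1) applied to $\widetilde{C}$, or deduced abstractly from the fact that $C$ is the set-theoretic inverse of a bijection that intertwines $\boxplus$. Then: if $\mu \in \Dcinfdiv(1,*)$ and $\mu = \mu_n^{\boxplus n}$ with $\mu_n \in \cD_c(1,*)$, applying $D$ and using part (1) gives $D(\mu) = D(\mu_n)^{\boxplus n}$, so $D(\mu) \in \Dcinfdiv(2)$. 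Conversely, if $\lambda \in \Dcinfdiv(2)$ and $\lambda = \lambda_n^{\boxplus n}$, applying $C$ gives $C(\lambda) = C(\lambda_n)^{\boxplus n} \in \Dcinfdiv(1,*)$, so $\lambda = D(C(\lambda)) \in D(\Dcinfdiv(1,*))$.

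There is no serious obstacle here; the only point that merits care is verifying that $D$ (or equivalently $C$) really does preserve $\boxplus$, and among the two natural approaches the $R$-transform route via Lemma \ref{lemma:48} is the cleanest, since it avoids having to rebuild a joint $C^*$-probability space and invoke the freeness/subalgebra observation. Once part (1) is in hand, part (2) is a purely formal consequence of $D$ being a $\boxplus$-preserving bijection with a $\boxplus$-preserving inverse.
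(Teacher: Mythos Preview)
Your proposal is correct and follows essentially the same approach as the paper: part (1) is proved by showing that the $R$-transforms of both sides agree via Lemma \ref{lemma:48} and the linearity of $\widetilde{D}$, and part (2) is then a formal consequence of (1) together with the bijectivity of $D$. The paper's proof is slightly terser (it simply says part (2) ``follows immediately from (1) and from the definition of $\boxplus$-infinite divisibility''), but your more explicit treatment of both inclusions is fine.
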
 

\begin{proof} (1) Since every $\lambda \in \cD_c (2)$ is 
uniquely determined by its $R$-transform, it suffices to 
verify that $D( \mu \boxplus \mu ' )$ and 
$D( \mu  ) \boxplus D ( \mu ' )$ have the same $R$-transform.  
Indeed,
\begin{align*}
R_{D( \mu \boxplus \mu ')}
& = \widetilde{D} ( R_{\mu \boxplus \mu '} )    
    \mbox{ (by Lemma \ref{lemma:48})}                        \\
& = \widetilde{D} ( R_{\mu} ) + \widetilde{D} ( R_{ \mu '} ) 
    \mbox{ (since $R_{\mu \boxplus \mu '} = R_{\mu} + R_{\mu '}$ 
           and $\widetilde{D}$ is linear) }                    \\
& = R_{ D( \mu ) } + R_{ D( \mu ') }
    \mbox{ (by Lemma \ref{lemma:48})}                         \\
& = R_{ D( \mu ) \boxplus D( \mu ') } .
\end{align*}

Part (2) follows immediately from (1) and from the definition of $\boxplus$-infinite divisibility.
\end{proof}

\begin{proof}  [Proof of Theorem \emph{\ref{thm:42}}]
We define the required bijection $\bB_{(1,*)}$ so that the diagram
\[
\begin{matrix}
\Dc (1,*) & \stackrel{\bB_{(1,*)}  }{{\mbox{\huge{$\longrightarrow{}$}}}} & 
\Dcinfdiv (1,*) \cr
& & & \cr
{D}{\mbox{\huge{$\downarrow$}}} & & 
{{\mbox{\huge{$\downarrow$}}}{D}} \cr
& & & \cr
\Dc(2)   & \stackrel{\bB_2  
}{{\mbox{\huge{$\longrightarrow{}$}}}} & \Dcinfdiv(2)
\end{matrix}
\]
is commutative, where $D$ is defined in Proposition \ref{prop:46} and
 $\bB_2$ is provided by Theorem \ref{thm:44} for $k=2$.  More precisely, let $D_0 : \Dcinfdiv (1,*) \to \Dcinfdiv (2)$ be the restriction of $D$; this is a bijection by Lemma \ref{lemma:49}(2)). Then define
\[
\bB_{(1,*)} := D_0^{-1} \circ \bB_2 \circ D.
\]
Pick an arbitrary $\mu \in \cD_c (1,*)$, denote 
$\bB_{(1,*)} ( \mu ) = \nu$. We prove that 
$R_{\nu} = \eta_{\mu}$.  Since $\widetilde{D}$ is injective, 
it suffices to verify that 
$\widetilde{D} ( R_{\nu} ) = \widetilde{D} (\eta_{\mu} )$.
Indeed, the definition of $\bB_{(1,*)}$ implies 
$\bB_2 (  D( \mu )  ) = D( \nu )$, and the definition 
of $\bB_2$ yields $R_{ D( \nu ) } = \eta_{ D( \mu ) }$. 
Thus 
$$
\widetilde{D} ( R_{\nu} ) = R_{D ( \nu )} 
= \eta_{ D( \mu ) } 
= \widetilde{D} ( \eta_{\mu} ) ,
$$
as required.
\end{proof}

\section{Parametrization of $\eta$-diagonal 
distributions in $\cD_c (1,*)$}

We show that an $\eta$-diagonal 
$*$-distribution $\mu \in \cD_c (1,*)$ is naturally
parametrized by the pair of compactly supported 
probability measures on $[0, \infty )$ that arise as the
distributions of $ZZ^{*}$ and of $Z^{*}Z$ in the 
noncommutative probability space
$( \bC \langle Z, Z^{*} \rangle, \mu )$. 

\begin{defrem}   \label{def:51}
Suppose that  $\mu\in\cD_c (1,*)$ is the $*$-distribution of an element $a$ in a 
$C^{*}$-probability space $( \cA , \varphi )$. Basic considerations on positive elements in a 
$C^{*}$-probability space (see, for instance,  \cite[Propositions 
3.13 and 3.6]{NS2006}) show the existence of 
compactly supported Borel probability measures $\sigma_1, \sigma_2$  
on $[ 0, \infty )$ such that 
\[
\varphi (  (aa^{*})^n  ) 
= \int_0^{\infty} t^n \, d \sigma_1 (t) \mbox{ and }
\varphi (  (a^{*}a)^n  )
= \int_0^{\infty} t^n \, d \sigma_2 (t), \quad n \in \bN .
\]
Thus $\sigma_1$ and $\sigma_2$ satisfy
\begin{equation}   \label{eqn:51a}
\int_0^\infty t^n\, d\sigma_1(t) = \mu((ZZ^*)^n),
\quad n \in \bN
\end{equation}
and
\begin{equation}   \label{eqn:51b}
\int_0^\infty t^n\, d\sigma_2(t) = \mu((Z^*Z)^n) 
\quad n \in \bN .
\end{equation} 
Moreover, $\sigma_1$ and $\sigma_2$ are uniquely 
determined by (\ref{eqn:51a}) and (\ref{eqn:51b}) since a 
compactly supported probability measure on $\bR$ is determined
by its moments.  We refer to  $\sigma_1$ and $\sigma_2$ as 
 the {\em distributions} of $ZZ^{*}$ and 
 $Z^{*}Z$, respectively, in the $*$-probability space 
$( \bC \langle Z, Z^{*} \rangle , \mu )$.
\end{defrem}

The following theorem provides the parametrization announced in the title of the section.

\begin{theorem}   \label{thm:51}
Let $\cPplus_c$ denote the set of all compactly supported Borel 
probability measures on $[ 0, \infty )$.  There is bijective 
map
\[
\Phi: \cPplus_c \times \cPplus_c \to
\{ \mu \in \cD_c (1,*) : \mu \mbox{ is $\eta$-diagonal} \} 
\]
described as follows: given $\sigma_1, \sigma_2 \in \cP^+_c$, 
 $\Phi ( \sigma_1, \sigma_2 )$ is the unique $\eta$-diagonal
$*$-distribution $\mu \in \cD_c (1,*)$ 
such that the distributions of $ZZ^{*}$ and $Z^{*}Z$ in 
$( \bC \langle Z, Z^{*} \rangle , \mu )$
are equal to $\sigma_1$ and $\sigma_2$, respectively.
\end{theorem}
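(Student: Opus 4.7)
The plan has three parts: an algebraic construction of $\mu$ from $( \sigma_1, \sigma_2 )$, a moment-matching argument that simultaneously yields the correct marginals and the injectivity of $\Phi$, and an operator-theoretic lift showing $\mu \in \cD_c (1,*)$.

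First I would extract scalars $\alpha_n := \mathrm{Cf}_n ( \eta_{\sigma_1} )$ and $\beta_n := \mathrm{Cf}_n ( \eta_{\sigma_2} )$ from the one-variable $\eta$-series of $\sigma_i$, as in Remark \ref{rem:211b} and equation (\ref{eqn:211a}). By Remark \ref{rem:24} there exists a unique $\mu \in \Dalg (1,*)$ with
\[
\eta_{\mu} (z, z^{*}) = \sum_{n=1}^{\infty} \alpha_n (z z^{*})^n + \sum_{n=1}^{\infty} \beta_n (z^{*} z)^n ,
\]
and by construction this $\mu$ is $\eta$-diagonal with determining sequences $( \alpha_n )$, $( \beta_n )$. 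Proposition \ref{prop:212}, applied in $( \bC \langle Z, Z^{*} \rangle , \mu )$, then says that the one-variable $\eta$-series of $ZZ^{*}$ and of $Z^{*}Z$ equal $\eta_{\sigma_1}$ and $\eta_{\sigma_2}$, so the one-variable moment series agree with those of $\sigma_1, \sigma_2$; since compactly supported measures on $\bR$ are determined by their moments, the distributions of $ZZ^{*}, Z^{*}Z$ in the sense of Remark \ref{def:51} equal $\sigma_1, \sigma_2$ (assuming $\mu$ does lie in $\cD_c (1,*)$). Running this chain in reverse (distribution $\leftrightarrow$ moments $\leftrightarrow$ one-variable $\eta$-series $\leftrightarrow$ determining sequences $\leftrightarrow$ $\eta_{\mu}$ $\leftrightarrow$ $\mu$) simultaneously gives uniqueness, hence injectivity of $\Phi$.

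The main obstacle is the promotion of $\mu$ from $\Dalg (1,*)$ to $\cD_c (1,*)$, i.e.\ exhibiting a concrete operator model. The construction I would propose is Boolean in flavour. For $i = 1, 2$, take $\cH_i = L^2 ( [0, \infty ), d \sigma_i )$ with cyclic unit vector $\xi_i = 1$ and let $B_i \in B ( \cH_i )$ be multiplication by $t^{1/2}$, so that $B_i^2$ has vector-state distribution $\sigma_i$. Form the Hilbert space amalgamated at the vacuum, $\cK = \bC \Omega \oplus \cH_1^{\circ} \oplus \cH_2^{\circ}$, where $\cH_i^{\circ} := \cH_i \ominus \bC \xi_i$ and each $\cH_i$ sits inside $\cK$ via $\xi_i \mapsto \Omega$, and equip $\cK$ with the vector state $\varphi ( \cdot ) = \langle \, \cdot \, \Omega , \Omega \rangle$. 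On $\cK$ one then builds $a$ by combining $B_1, B_2$ with a partial isometry that moves vectors between $\cH_1^{\circ}$ and $\cH_2^{\circ}$, arranged so that $aa^{*}$ behaves as $B_1^{2}$ on $\cH_1$ and kills $\cH_2^{\circ}$, and symmetrically for $a^{*}a$. Using Theorem \ref{thm:28}, verifying that this $a$ has $*$-distribution $\mu$ reduces to two checks: that all non-mixed-alternating $*$-moments of $a$ under $\varphi$ vanish, which should follow from the way $a$ shuttles between the blocks $\cH_1^{\circ}$ and $\cH_2^{\circ}$; and that mixed-alternating ones factor as required by ($\eta$DM2), which should follow from the Boolean independence of $aa^{*}$ and $a^{*}a$ already built into the amalgamation at $\Omega$. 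Arranging $a$ carefully enough that both $aa^{*}$ and $a^{*}a$ realize the prescribed (possibly distinct) measures $\sigma_1$ and $\sigma_2$ simultaneously is the delicate technical core of the argument.
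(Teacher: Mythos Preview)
Your algebraic reductions are correct and line up with the paper: uniqueness, injectivity, and surjectivity all follow from Theorem~\ref{thm:28} and Proposition~\ref{prop:212} in exactly the way you describe. The only substantive task is the operator model, and here your sketch runs into a genuine obstruction. If $aa^{*}$ is to act as $B_1^{2}$ on $\cH_1 \hookrightarrow \cK$ and annihilate $\cH_2^{\circ}$, while $a^{*}a$ acts as $B_2^{2}$ on $\cH_2$ and annihilates $\cH_1^{\circ}$, then the restrictions of $aa^{*}$ and $a^{*}a$ to the complements of their kernels are (unitarily equivalent to) multiplication by $t$ on $L^2((0,\infty),\sigma_1)$ and on $L^2((0,\infty),\sigma_2)$, respectively. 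But for any bounded $a$ these two restrictions must be unitarily equivalent, and that fails for generic $\sigma_1 \neq \sigma_2$ (take $\sigma_1 = \delta_1$, $\sigma_2 = \delta_2$). So no operator $a$ on $\bC\Omega \oplus \cH_1^{\circ} \oplus \cH_2^{\circ}$ can satisfy the relations you ask for; the ``delicate technical core'' you flag is in fact an impasse for this particular ansatz, not merely a detail to be filled in.

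The paper sidesteps this by working on a tensor product rather than a direct sum. One builds a single selfadjoint $X$ on a space $\cH$ realizing the symmetric square roots $\widetilde{\sigma}_1, \widetilde{\sigma}_2$ with respect to two \emph{orthogonal} unit vectors $\xi_1, \xi_2$, together with the rank-one partial isometry $Y$ sending $\xi_1$ to $\xi_2$. On $\cK = \cH \otimes \cH$ with vacuum $\xi = \xi_1 \otimes \xi_2$ one then sets $A = V(Y \otimes X)$, where $V$ is the tensor flip. The identities $AA^{*} = X^{2} \otimes YY^{*}$ and $A^{*}A = Y^{*}Y \otimes X^{2}$ show that $AA^{*}$ and $A^{*}A$ are the \emph{same} pair of operators up to a swap of tensor factors, so there is no spectral mismatch; yet the vector state at $\xi_1 \otimes \xi_2$ sees $X^{2}$ through $\xi_1$ in one case and through $\xi_2$ in the other, producing the two distinct measures $\sigma_1$ and $\sigma_2$. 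The verification of ($\eta$DM1) and ($\eta$DM2) is then a direct computation (Lemmas~\ref{lemma:53}--\ref{lem:42}). The moral is that the asymmetry between $\sigma_1$ and $\sigma_2$ has to live in the \emph{state}, not in the operator.
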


The point of Theorem \ref{thm:51} is that the map $\Phi$ 
is defined on all of $\cPplus_c \times \cPplus_c$. In other words, for
every $\sigma_1, \sigma_2 \in \cPplus_c$ there exists an 
$\eta$-diagonal $*$-distribution $\mu \in \cD_c (1,*)$ such 
that  (\ref{eqn:51a}) and (\ref{eqn:51b}) hold.  We  
prove this by
producing an {\em operator model} 
for $\mu$:  starting from $\sigma_1$ and $\sigma_2$ we  
construct explicitly an operator $A$ on a Hilbert space $\cK$
such that the $*$-distribution of $A$ with respect to a
suitably chosen functional on $B( \cK )$ is the required $\eta$-diagonal
distribution.  The bulk of this section is 
devoted to the description of the operator model. At the end, we complete the proof of Theorem \ref{thm:51}.  The construction of the operator 
model is described in the next remark.

\begin{remnotation}   \label{rem:52}
({\em Description of the operator model.})
Fix $\sigma_1,\sigma_2\in\cPplus_c$ which we take as the input for our construction of an $\eta$-diagonal operator.
In the description of the construction, it is convenient to 
 use the \emph{symmetric square roots} of $\sigma_1$ and 
$\sigma_2$.  These are the symmetric compactly supported Borel 
probability measures $\widetilde{\sigma}_1$ and 
$\widetilde{\sigma}_2$ on $\bR$ with moments given by 
the formula
\[
\int_{-\infty}^{\infty} t^n \, d \widetilde{\sigma_j} (t) =
\begin{cases} 0,&n\text{ odd}  \\
\int_0^{\infty} t^{n/2} \, d \sigma_j (t) ,&n\text{ even,
}
\end{cases} 
\]
for $j = 1,2$. Our construction of an $\eta$-diagonal operator proceeds in three steps.

\noindent{\bf Step 1.} We construct a Hilbert space $\cH$, an operator $X\in\cB(\cH)$, and vectors $\xi_1,\xi_2\in\cH$  with the following properties:
\begin{enumerate}
	\item[(1a)] $||\xi_1||=||{\xi}_2||=1$,
	\item[(1b)] $\langle \xi_1, \xi_2 \rangle=0$,
	\item[(1c)] $\langle X^k\xi_1,{\xi}_2\rangle=\langle X^k{\xi}_2,\xi_1 \rangle=0$ for $k\in\mathbb{N}$,
	\item [(1d)]$\langle X^{2k-1}\xi_j,\xi_j\rangle=0$ and $\langle X^{2k}\xi_j,\xi_j\rangle=\int_{0}^{\infty} t^{k} \, d \sigma_j (t)$ for $k\in\mathbb N$ and $j=1,2.$
\end{enumerate}
In other words, property (1d) says that $X$ has distribution 
$\widetilde{\sigma}_1$ with respect to the vector state 
defined by $\xi_1$, and distribution $\widetilde{\sigma}_2$ with 
respect to the one defined by $\xi_2$, on the operator algebra
 $B( \cH )$. For the actual construction of $X$ consider, 
for $j=1,2$, Hilbert spaces $\cM_j$, operators $T_j\in\cB(\cM_j)$,
and unit vectors $\eta_j\in\cM_j$, such that the distribution of 
$T_j$ with respect to the vector state $\eta_j$ is 
$\widetilde{\sigma}_{j}$.  Then set $\cH=\cM_1\oplus{\cM}_2$, 
$X=T_1\oplus{T}_2$,  $\xi_1=\eta_1 \oplus 0$, and 
${\xi}_2=0\oplus{\eta_2}$. Properties (1a)--(1d) are then easily 
verified.  (In subsequent steps we only use the properties 
(1a)--(1d). The precise description of  $\mathcal{H},X,\xi_1$, 
and ${\xi}_2$ is not necessary.)

\noindent{\bf Step 2.} Define a rank-one partial isometry $Y\in\cB(\cH)$ by setting $Y (\zeta)=\langle \zeta , {\xi}_1 \rangle\xi_2$ for  $\zeta\in \cH$. We have $Y{\xi}_1=\xi_2$,   $Y^*\zeta=\langle \zeta , \xi_2 \rangle{\xi}_1$ for $\zeta\in\cH$, and
\begin{equation}   \label{eqn:52a}
YY^*\zeta  = \langle \zeta , \xi_2 \rangle\xi_2,
\ \ Y^*Y\zeta  = \langle \zeta , {\xi}_1 \rangle{\xi}_1 .
\end{equation}
Thus $YY^{*}$ and $Y^{*}Y$ are the orthogonal projections onto 
the $1$-dimensional spaces generated by 
$\xi_2$ and ${\xi}_1$, respectively.

\noindent{\bf Step 3.} Consider the Hilbert space 
$\mathcal{K}=\cH\otimes\cH$ and the unit vector 
$\xi=\xi_1 \otimes{\xi}_2\in\mathcal{K}$, then consider the
$C^{*}$-probability space $(\cB(\mathcal{K}),\varphi_\xi)$, 
where $\varphi_\xi(T)=\langle T\xi,\xi\rangle$ for 
$T\in\cB(\mathcal{K})$.  Let $V \in B( \cH \otimes \cH )$ be the
flip operator determined by the requirement that
by $V(\zeta\otimes\zeta^\prime)=\zeta^\prime\otimes\zeta$, $\zeta,\zeta'\in\mathcal H$.
Note that $V$ is a symmetry (that is, it is self-adjoint and  
$V^2=I$).  Finally, define  $A=V(Y \otimes X)$.

This concludes the construction of the variable $A$ in 
$( B ( \mathcal{K} ),\varphi_\xi)$. 
\end{remnotation}

We now take on the proof that the operator $A$ constructed above has
the desired $\eta$-diagonal distribution with respect to the 
functional $\varphi_{\xi}$.  We start by recording 
some easily verified identities satisfied by $A$, the proof of
which is left to the reader.

\begin{lemma}    \label{lemma:53}
Consider the framework of Remark \emph{\ref{rem:52}}.  We have
\begin{equation}   \label{eqn:53a}
AA^* = X^2 \otimes YY^*, \ \ A^*A = Y^*Y \otimes X^2,
\end{equation}
and 
\begin{equation}   \label{eqn:53b}
A^2 = XY \otimes YX, \ \ \left(A^*\right)^2 = Y^*X \otimes XY^*.
\end{equation}
\hfill $\square$
\end{lemma}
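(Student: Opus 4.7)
The plan is to reduce everything to two elementary facts about the ingredients of the construction. First, the operator $X = T_1 \oplus T_2$ is selfadjoint, because each $T_j$ is selfadjoint (having a real-valued distribution $\widetilde{\sigma}_j$ supported on $\bR$ as a vector state). Second, $V$ is a symmetry, so $V^{*}=V$ and $V^{2}=I$, and its defining property gives the flip identity
\[
V (S \otimes T) V = T \otimes S, \quad S,T \in B( \cH ).
\]
Using $X = X^{*}$ and $V^{*} = V$, we first record that
$A^{*} = ( Y \otimes X )^{*} V^{*} = (Y^{*} \otimes X) V$.

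For the identities in (\ref{eqn:53a}), I would simply compute, using $V^{2}=I$ and the flip rule:
\[
AA^{*} = V(Y \otimes X)(Y^{*} \otimes X) V
       = V( YY^{*} \otimes X^{2} ) V
       = X^{2} \otimes YY^{*},
\]
and
\[
A^{*}A = (Y^{*} \otimes X) V \cdot V (Y \otimes X)
       = (Y^{*} \otimes X)(Y \otimes X)
       = Y^{*}Y \otimes X^{2}.
\]

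For (\ref{eqn:53b}), the trick is to move a flip operator past the tensor in the middle. Namely,
\[
A^{2} = V (Y \otimes X) V (Y \otimes X)
      = \bigl( V (Y \otimes X) V \bigr)( Y \otimes X )
      = (X \otimes Y)(Y \otimes X)
      = XY \otimes YX ,
\]
and similarly
\[
(A^{*})^{2} = (Y^{*} \otimes X) V (Y^{*} \otimes X) V
            = (Y^{*} \otimes X)(X \otimes Y^{*})
            = Y^{*}X \otimes XY^{*}.
\]
There is no real obstacle here; the only thing one has to remember to check is the selfadjointness of $X$, which is immediate from the construction in Step 1.
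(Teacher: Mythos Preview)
Your proof is correct and is precisely the straightforward verification the paper intended; note that the paper omits the proof entirely (the $\square$ indicates it is left to the reader), so there is no alternative approach to compare against. Your observation that the selfadjointness of $X$ is the one point requiring a moment's thought is well taken, since the paper's Step~1 only specifies that $T_j$ has distribution $\widetilde{\sigma}_j$ and does not state $T_j=T_j^*$ explicitly---but this is implicit in the standard meaning of an operator having a given probability measure on $\bR$ as its distribution.
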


The following lemma establishes the distributions of $AA^*$ and $A^*A$ along with a few non-alternating $*$-moments of $A$.

\begin{lemma}   \label{lemma:54}
Let $A$ be as above, then for any integer $k\geq 0$ we have
\begin{enumerate}
\item[\rm(1)]	$\varphi_\xi((AA^*)^k)=
                 \int_0^{\infty} t^k \, d \sigma_1 (t)$,
\item[\rm(2)]	$\varphi_\xi(A(AA^*)^k)=  0$, 
\item[\rm(3)]	$\varphi_\xi(A^*(AA^*)^k)=  0$,
\item[\rm(4)]	$\varphi_\xi((A^*A)^k)=
                 \int_0^{\infty} t^k \, d \sigma_2 (t)$,
\item[\rm(5)]	$\varphi_\xi(A(A^*A)^k)=  0$,
\item[\rm(6)]	$\varphi_\xi(A^*(A^*A)^k)=  0$.
\end{enumerate}
\end{lemma}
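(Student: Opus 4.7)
The plan is to reduce everything to the tensor-product formulas in Lemma \ref{lemma:53} together with the inner-product data (1a)--(1d). The key structural input about $Y$ is that, by its definition and (\ref{eqn:52a}), the operators $YY^{*}$ and $Y^{*}Y$ are the rank-one orthogonal projections onto $\bC\xi_2$ and $\bC\xi_1$ respectively; in particular $(YY^{*})^k=YY^{*}$ and $(Y^{*}Y)^k=Y^{*}Y$ for every $k\ge 1$.

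For (1) and (4), I would raise the formulas $AA^{*}=X^2\otimes YY^{*}$ and $A^{*}A=Y^{*}Y\otimes X^2$ to the $k$th power, obtaining $(AA^{*})^k=X^{2k}\otimes YY^{*}$ and $(A^{*}A)^k=Y^{*}Y\otimes X^{2k}$ for $k\ge 1$. Evaluating $\varphi_{\xi}$ on $\xi=\xi_1\otimes\xi_2$ then splits each expression as a product of two inner products on $\cH$: since $YY^{*}\xi_2=\xi_2$ and $Y^{*}Y\xi_1=\xi_1$ (each of norm $1$), the $Y$-factors contribute $1$, while property (1d) supplies $\langle X^{2k}\xi_1,\xi_1\rangle=\int_0^\infty t^k\,d\sigma_1$ for (1) and $\langle X^{2k}\xi_2,\xi_2\rangle=\int_0^\infty t^k\,d\sigma_2$ for (4). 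The $k=0$ case in each identity is the trivial $\varphi_{\xi}(I)=1$.

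For (2), (3), (5), (6), I would multiply these tensor-product expressions on the left by $A=V(Y\otimes X)$ or by $A^{*}=(Y^{*}\otimes X)V$ (using $X=X^{*}$ and $V=V^{*}$), commuting $V$ through pure tensors via the identity $V(P\otimes Q)V=Q\otimes P$. After the projections $YY^{*}$, $Y^{*}Y$ are absorbed by $\xi_1$, $\xi_2$ and the simplifications $Y^{*}YY^{*}=Y^{*}$, $YY^{*}Y=Y$ are performed, each resulting vector takes the form $(\text{scalar})\cdot \zeta\otimes \zeta'$ whose inner product with $\xi=\xi_1\otimes\xi_2$ contains at least one factor of the shape $\langle X^m\xi_i,\xi_j\rangle$ with $i\neq j$ and $m\geq 1$; all such factors vanish by (1c). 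The remaining cases $k=0$ reduce to $\varphi_{\xi}(A)=\varphi_{\xi}(A^{*})=0$, which follows from the same (1c) after a one-line expansion of $V(Y\xi_1\otimes X\xi_2)$.

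The whole calculation is mechanical; the only point demanding a little care is keeping the two tensor factors correctly aligned each time $V$ is commuted past a pure tensor, but this is an organizational obstacle rather than a conceptual one.
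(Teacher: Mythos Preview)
Your proposal is correct and follows essentially the same route as the paper: both arguments expand $(AA^{*})^k$ and $(A^{*}A)^k$ via Lemma~\ref{lemma:53}, absorb the rank-one projections $YY^{*}$, $Y^{*}Y$ into $\xi_2$, $\xi_1$, and then identify a factor $\langle X^{m}\xi_i,\xi_j\rangle$ with $i\neq j$ (vanishing by (1c)) in each of the odd-length cases. The paper carries out (1)--(3) explicitly and leaves (4)--(6) to symmetry, while you give a uniform description covering all six; the underlying computation is the same.
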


\begin{proof}
We verify only the first three equations. The proof of (4)--(6) is similar. We have
	$$(AA^*)^k=X^{2k}\otimes(YY^*)^k=X^{2k}\otimes(YY^*),$$ so
	$$\varphi_\xi((AA^*)^k)=\langle (X^{2k}\otimes(YY^*))\xi,\xi\rangle=\langle X^{2k}\xi_1,\xi_1\rangle,$$
and (1) follows from property (1d) in Step 1 of the construction of 
$A$. To prove (2), we calculate
	$$ A(AA^*)^k=V(Y\otimes X)(X^{2k}\otimes YY^*),$$
	thus
	\begin{align*}
	\varphi_\xi(A(AA^*)^k)&=\langle (Y\otimes X)(X^{2k}\otimes YY^*){\xi}_1\otimes{\xi_2},V{\xi}_1\otimes{\xi}_2\rangle\\
	&=\langle(Y\otimes X)(X^{2k}{\xi_1}\otimes{\xi}_2 ),{\xi}_2\otimes{\xi}_1\rangle\\
	&=\langle YX^{2k}{\xi_1},{\xi}_2\rangle\langle X{\xi}_2,\xi_1\rangle=0,
	\end{align*}
	because $\langle   X {\xi}_2 , \xi_1 \rangle=0$, thereby concluding the proof of (2).
	Similarly,
	$$A^*(AA^*)^k=(X\otimes Y^*) V ( X^{2k} \otimes YY^* ),$$ so
	\begin{align*}
	\varphi_\xi(A^*(AA^*)^k)=&\langle ((X\otimes Y^*) V ( X^{2k} \otimes YY^* )\xi,\xi\rangle=\langle X\otimes Y^*(\xi_2\otimes X^{2k}\xi_1),\xi_1\otimes \xi_2\rangle\\=&\langle  X\xi_2,{\xi}_1\rangle\langle Y^*X^{2k}{\xi}_1,\xi_2\rangle=0
	\end{align*}
by property (1c) in Step 1 of the construction of $A$.
\end{proof}

The next lemma gives some  properties of the operator $A$ that 
are useful in verifying its  $\eta$--diagonality.

\begin{lemma}
\label{lem:42}
Let $A$ be as above, then for any integer $k\geq 0$ we have
\begin{enumerate}
\item[\rm(1)]$A^2(AA^*)^k\xi=0$,
\item[\rm(2)]$(A^*)^2(AA^*)^k\xi=0$, 
\item[\rm(3)]$(A^*A)(AA^*)^k\xi 
    = \int_0^{\infty} t^k \, d \sigma_1 (t) \cdot A^*A \xi$,
\item[\rm(4)]$A^2(A^*A)^k\xi=0$,
\item[\rm(5)]$(A^*)^2(A^*A)^k{\xi}_=0$,
\item[\rm(6)]$(AA^*)(A^*A)^k{\xi}
    = \int_0^{\infty} t^k \, d \sigma_2 (t) \cdot AA^* \xi$.
\end{enumerate}
\end{lemma}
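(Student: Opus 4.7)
The plan is to reduce all six identities to short direct computations by first simplifying the powers $(AA^*)^k$ and $(A^*A)^k$ using Lemma \ref{lemma:53} and the fact (from Step 2 of Remark \ref{rem:52}) that $YY^*$ and $Y^*Y$ are (rank-one) orthogonal projections. Indeed, since $X$ commutes with scalars and $(YY^*)^k = YY^*$, one has
\[
(AA^*)^k = X^{2k} \otimes YY^*, \qquad (A^*A)^k = Y^*Y \otimes X^{2k}, \quad k \geq 1,
\]
with the natural conventions for $k=0$. Combined with the formulas $A^2 = XY \otimes YX$ and $(A^*)^2 = Y^*X \otimes XY^*$ from \eqref{eqn:53b}, each of the six expressions becomes a single elementary tensor operator applied to $\xi = \xi_1 \otimes \xi_2$.

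Next, I would apply these to $\xi$ using the two key absorbing identities $YY^*\xi_2 = \xi_2$ and $Y^*Y\xi_1 = \xi_1$ (since $\xi_2$ and $\xi_1$ are the unit vectors spanning the ranges of $YY^*$ and $Y^*Y$, respectively). This reduces each of the six left-hand sides to a product of two explicit scalars times a simple vector. Specifically, using the rank-one formulas $Y\zeta = \langle \zeta, \xi_1\rangle \xi_2$ and $Y^*\zeta = \langle \zeta, \xi_2\rangle \xi_1$, one finds that the four vanishing statements (1), (2), (4), (5) each collapse to a tensor product in which one of the two factors carries an inner product of the form $\langle X^n \xi_1, \xi_2\rangle$ or $\langle X^n \xi_2, \xi_1\rangle$, which is zero by property (1c) in Step 1. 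For example, for (1),
\[
A^2(AA^*)^k\xi = XYX^{2k}\xi_1 \otimes YX\,YY^*\xi_2 = XYX^{2k}\xi_1 \otimes \langle X\xi_2,\xi_1\rangle \xi_2 = 0,
\]
and the other vanishing cases follow by the same pattern.

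For the two non-vanishing identities (3) and (6), the same simplification yields scalar inner products of the form $\langle X^{2k}\xi_j, \xi_j\rangle$, which by property (1d) equal $\int_0^\infty t^k\, d\sigma_j(t)$. It then remains to recognize that the surviving vector factor matches $A^*A\,\xi = \xi_1 \otimes X^2\xi_2$ in case (3) and $AA^*\xi = X^2\xi_1 \otimes \xi_2$ in case (6). Concretely, for (3):
\[
(A^*A)(AA^*)^k\xi = Y^*YX^{2k}\xi_1 \otimes X^2 YY^*\xi_2 = \langle X^{2k}\xi_1, \xi_1\rangle \xi_1 \otimes X^2\xi_2,
\]
and the coefficient is exactly $\int_0^\infty t^k \, d\sigma_1(t)$; (6) is analogous with the roles of the two tensor factors swapped.

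There is no real obstacle here; the only bookkeeping concern is correctly tracking which of $\xi_1,\xi_2$ sits in which tensor slot when the flip $V$ appears, but this was already absorbed into the identities \eqref{eqn:53a}--\eqref{eqn:53b} of Lemma \ref{lemma:53}, so once those are invoked the argument is purely mechanical. The whole proof can be presented as six near-identical three-line calculations, with cases (1), (2), (4), (5) grouped together (vanishing via (1c)) and cases (3), (6) grouped together (non-vanishing via (1d)).
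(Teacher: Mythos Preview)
Your proposal is correct and follows essentially the same approach as the paper: both proofs expand the relevant operators as elementary tensors via Lemma \ref{lemma:53}, apply them to $\xi=\xi_1\otimes\xi_2$, and then invoke properties (1c) and (1d) from Step 1 exactly as you describe. The paper only writes out cases (1)--(3) explicitly and declares (4)--(6) similar, while you organize the six cases into the two groups (vanishing via (1c), non-vanishing via (1d)); the computations themselves are identical.
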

\begin{proof}
As in the previous proof we only verify (1)--(3). We have	
	$$(A)^2(AA^*)^k=(XY\otimes YX)(X^{2k}\otimes YY^*)=XYX^{2k}\otimes YXYY^*,$$
	and using the fact that $YY^*{\xi}_2={\xi}_2$ we see that
	$	YXYY^*{\xi}_2=\langle X\xi_2,{\xi}_1\rangle\xi_2=0$ by property (1c).
	Similarly,
	$$
	(A^*)^2(AA^*)^k=(Y^*X\otimes XY^*)(X^{2k}\otimes YY^*)=Y^*X^{2k+1}\otimes XY^*,
	$$
	and (2) follows because
	$Y^*X^{2k+1}\xi_1=\langle X^{2k+1} \xi_1, {\xi}_2 \rangle \xi_1=0$
	by  (1c).
	Finally, (\ref{eqn:53a}) yields
	\[
	(A^*A)(AA^*)^k=(Y^*Y\otimes X^2) (X^{2k}\otimes YY^*)=Y^*YX^{2k}\otimes X^2 YY^*.
	\]
Observe that $Y^*YX^{2k}\xi_1
=\langle X^{2k}\xi_1 , \xi_1 \rangle \xi_1
= \int_0^{\infty} t^k \, d \sigma_1 (t) \cdot \xi_1$ by (1d), 
while $YY^*{\xi}_2={\xi}_2$.  Therefore
\[
(A^*A)(AA^*)^k
= \int_0^{\infty} t^k \, d \sigma_1 (t) \cdot
(\xi_1\otimes X^2 {\xi}_2)
= \int_0^{\infty} t^k \, d \sigma_1 (t) \cdot A^*A \xi ,
\]
thus proving (3).
\end{proof}

\begin{corollary}\label{mixt-alt} 
Let $W=W_1W_2\cdots W_d$ be a mixed-alternating word in $A$ and $A^*$, factored as in 
	\emph{\eqref{eqn:27b}} with $d\ge2$. Then
	$$W\xi=\varphi_\xi(W_2)\varphi_\xi(W_3)\cdots\varphi_\xi(W_d)W_1\xi.$$
\end{corollary}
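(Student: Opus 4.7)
The plan is to prove the identity by induction on $d$, using Lemma~\ref{lem:42}(3),(6) as the engine that ``collapses'' an alternating block applied to $\xi$, and Lemma~\ref{lemma:54}(1),(4) to recognize the collapse factors as values of $\varphi_\xi$. The action of $A^*A$ on $(AA^*)^k\xi$ (and symmetrically of $AA^*$ on $(A^*A)^k\xi$) already contains the whole mechanism: it kills the power on the right and replaces it by the scalar $\int t^k d\sigma_1(t)=\varphi_\xi((AA^*)^k)$.

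For the base case $d=2$, the hypothesis forces two symmetric situations. Suppose first that $W_2=(AA^*)^{m_2}$ and $W_1=(A^*A)^{m_1}$. I would rewrite
\[
W_1 W_2 \,\xi = (A^*A)^{m_1-1}\bigl((A^*A)(AA^*)^{m_2}\xi\bigr),
\]
apply Lemma~\ref{lem:42}(3) to the inner vector to obtain $\int_0^\infty t^{m_2}\,d\sigma_1(t)\cdot A^*A\,\xi$, and identify the scalar with $\varphi_\xi((AA^*)^{m_2})=\varphi_\xi(W_2)$ via Lemma~\ref{lemma:54}(1). Pulling the scalar out gives $W_1 W_2\,\xi=\varphi_\xi(W_2)\,W_1\xi$. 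The opposite case $W_2=(A^*A)^{m_2}$, $W_1=(AA^*)^{m_1}$ is handled identically, with Lemma~\ref{lem:42}(6) and Lemma~\ref{lemma:54}(4) in place of their counterparts.

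For the inductive step with $d\ge 3$, the key observation is that $W_2W_3\cdots W_d$ is itself mixed-alternating, and its canonical factorization is precisely $W_2,W_3,\dots,W_d$, because the requirement that consecutive factors of $W$ have different alternating types is inherited. By the inductive hypothesis,
\[
W_2 W_3\cdots W_d\,\xi=\varphi_\xi(W_3)\cdots\varphi_\xi(W_d)\,W_2\xi.
\]
Applying $W_1$ to both sides and then using the base case on $W_1 W_2\,\xi$ gives
\[
W\xi=\varphi_\xi(W_3)\cdots\varphi_\xi(W_d)\,W_1W_2\,\xi=\varphi_\xi(W_2)\varphi_\xi(W_3)\cdots\varphi_\xi(W_d)\,W_1\xi,
\]
which is the claim. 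There is no real obstacle here; the only bookkeeping issue is making sure the base case covers both parities of the last block, which it does by symmetry between Lemma~\ref{lem:42}(3) and (6).
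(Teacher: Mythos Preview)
Your proof is correct and follows exactly the approach of the paper: the base case $d=2$ via Lemma~\ref{lem:42}(3),(6) (with Lemma~\ref{lemma:54}(1),(4) identifying the scalars), and then a straightforward induction on $d$. The paper's own proof says precisely this, only more tersely.
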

\begin{proof}
	Parts (3) and (6) of the preceding lemma yield the conclusion when $d=2$.  The general case follows easily by induction on $d$.
\end{proof}

\begin{proposition}  \label{prop:56}
Let $\sigma_1$ and $\sigma_2$ be probability measures in 
$\cPplus_c$, and let the operator $A$ in 
$(\mathcal{B}(\mathcal{K}),\varphi_\xi)$ be constructed as
in Remark \emph{\ref{rem:52}}.  Then the $*$-distribution of
$A$ is $\eta$-diagonal.  Moreover, the distributions of  
$AA^*$ and $A^*A$ are  $\sigma_1$ and ${\sigma}_2$, respectively.
\end{proposition}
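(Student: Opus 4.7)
The plan is to invoke Theorem \ref{thm:28}: it suffices to verify the two moment conditions ($\eta$DM1) and ($\eta$DM2) for the $*$-distribution of $A$ with respect to $\varphi_\xi$. The claim about the distributions of $AA^*$ and $A^*A$ is the easy half, following directly from parts (1) and (4) of Lemma \ref{lemma:54}: these give $\varphi_\xi((AA^*)^k) = \int_0^{\infty} t^k \, d \sigma_1 (t)$ and $\varphi_\xi((A^*A)^k) = \int_0^{\infty} t^k \, d \sigma_2 (t)$, and since $AA^*$ and $A^*A$ are positive elements of $\cB(\cK)$, their distributions are compactly supported Borel probability measures on $[0,\infty)$, hence determined by their moments.

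Condition ($\eta$DM2) is an immediate consequence of Corollary \ref{mixt-alt}. For a mixed-alternating word $w$ with canonical factorization $w = w_1 \cdots w_d$, the corollary gives $A^w\xi = \varphi_\xi(A^{w_2}) \cdots \varphi_\xi(A^{w_d})\, A^{w_1}\xi$; taking inner product with $\xi$ yields $\varphi_\xi(A^w) = \prod_{i=1}^d \varphi_\xi(A^{w_i})$, which is exactly ($\eta$DM2). (The case $d=1$, where $w$ is itself alternating, is trivial.)

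The heart of the proof is ($\eta$DM1): for a non-mixed-alternating $w = (\ell_1,\ldots,\ell_n)$, one must show $\varphi_\xi(A^w)=0$. My plan is to associate to $w$ its \emph{maximal mixed-alternating suffix} of length $2k_{\max}$ (allowing $k_{\max}=0$, i.e., the empty suffix), and to set $j_0 = n - 2k_{\max}$. Since $w$ is not mixed-alternating, $j_0 \ge 1$. The key combinatorial observation is that if $j_0 \ge 2$, then maximality of $k_{\max}$ forces $\ell_{j_0-1} = \ell_{j_0}$: otherwise the distinct-letter pair $(\ell_{j_0-1},\ell_{j_0})$ could be prepended to extend the mixed-alternating suffix. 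Setting $\ell := \ell_{j_0-1} = \ell_{j_0}$, Corollary \ref{mixt-alt} (or the trivial subcases of an alternating or empty suffix) identifies $A^{(\ell_{j_0+1},\ldots,\ell_n)}\xi$ as a scalar multiple of $(AA^*)^m\xi$ or $(A^*A)^m\xi$ for some $m \ge 0$, and items (1), (2), (4), (5) of Lemma \ref{lem:42} then annihilate this via $(A^\ell)^2$, giving $A^w\xi = 0$. In the remaining case $j_0 = 1$ (so $n = 2k_{\max}+1$ is odd), the same reduction produces $A^w\xi = c \cdot A^{\ell_1} (AA^*)^m\xi$ or $c \cdot A^{\ell_1} (A^*A)^m\xi$, and items (2), (3), (5), (6) of Lemma \ref{lemma:54} supply $\varphi_\xi(A^w) = 0$.

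The main obstacle is combinatorial: isolating the right position in $w$ so that to its right one has a mixed-alternating tail (which Corollary \ref{mixt-alt} collapses to a convenient vector) while at the seam one finds a repeated letter (which $(A^\ell)^2$ annihilates via Lemma \ref{lem:42}). Once the maximal mixed-alternating suffix is identified as the correct invariant, everything reduces to the identities already established.
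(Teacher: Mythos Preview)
Your proof is correct and follows essentially the same approach as the paper's own proof: identify the maximal mixed-alternating suffix of a non-mixed-alternating word, collapse it via Corollary~\ref{mixt-alt} to a scalar multiple of $(AA^*)^m\xi$ or $(A^*A)^m\xi$, and then kill the result using either Lemma~\ref{lemma:54} (when a single letter remains on the left) or Lemma~\ref{lem:42} (when a repeated letter $(A^\ell)^2$ sits at the seam). Your treatment of the edge cases (empty suffix, alternating suffix with $d=1$) is in fact slightly more explicit than the paper's.
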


\begin{proof}
	The second assertion follows from parts (1) and (4) of Lemma  \ref{lemma:54}.
	It remains  to prove that the distribution of $A$ is  $\eta$-diagonal, and to do this we verify the conditions in 
Theorem \ref{thm:28}.  Let $W=W_1W_2\cdots W_d$ be a mixed-alternating word in $A$ and $A^*$, factored as in 
	\emph{\eqref{eqn:27b}}. Corollary \ref{mixt-alt} yields
	$$\varphi_\xi(W)=\varphi_\xi(W_2)\varphi_\xi(W_3)\cdots\varphi_\xi(W_d)\langle W_1\xi,\xi\rangle=\prod_{j=1}^d\varphi_\xi(W_j),$$
	thus verifying condition ($\eta$DM2). Finally we verify condition ($\eta$DM1). Suppose that  $V$ is a word in $A$ and $A^*$ that is not mixed-alternating, and choose a mixed-alternating word $W$ of maximum length with the property that $V$ can be written as $V=UW$ for some non-empty word $U$. Also, write $W=W_1W_2\cdots W_d$ as in 
	\emph{\eqref{eqn:27b}}. We have  
	$$
	\varphi_\xi(V)=\langle V\xi,\xi\rangle=\prod_{j=2}^d\varphi_\xi(W_j)\langle UW_1\xi,\xi\rangle
	$$
	by Corollary \ref{mixt-alt}. If $|U|=1$, the equality
	$\varphi_\xi(V)=0$ follows from Lemma \ref{lemma:54}. If $|U|\ge2$, then $U$ is of the form $U'AA$ or $U'A^*A^*$ for some (possibly empty) word $U'$.  In this case,  $\varphi_\xi(V)=0$ by Lemma \ref{lem:42}.
\end{proof}

We conclude the discussion of the parametrization 
announced at the beginning of the section.

\begin{proof}  [Proof of Theorem \emph{\ref{thm:51}}]

We first note that the map $\Phi$ is well-defined.  Indeed, let 
$\sigma_1, \sigma_2 \in \cPplus_c$ be given.  The existence of an
$\eta$-diagonal $*$-distribution $\mu \in \cD_c (1,*)$ which 
fulfils the conditions (\ref{eqn:51b}) is ensured by Proposition 
\ref{prop:56}.  The uniqueness of $\mu$ follows from the fact
that an $\eta$-diagonal $*$-distribution is completely 
determined by its alternating $*$-moments, as we saw in 
Theorem \ref{thm:28}.

The surjectivity of $\Phi$ is immediate from its definition: 
every $\eta$-diagonal $*$-distribution $\mu \in \cD_c (1,*)$ can 
be written as $\Phi ( \sigma_1, \sigma_2 )$, where 
$\sigma_1 , \sigma_2 \in \cPplus_c$ are the distributions of 
$ZZ^{*}$ and respectively $Z^{*}Z$ in 
$( \bC \langle Z, Z^{*} \rangle , \mu )$, in the sense discussed
in Definition \ref{def:51}.

Finally, the injectivity of $\Phi$ is immediate as well.  Indeed, 
if $\Phi ( \sigma_1 , \sigma_2 ) = \mu$, then the moments of
$\sigma_1$ and $\sigma_2$ can be retrieved as alternating moments
of $\mu$, and compactly supported probability measures on $\bR$
are determined by their moments.
\end{proof}

\section{ Parametrization of infinitely divisble
$R$-diagonal distributions}

In this section we use the BBP method to characterize
$\boxplus$-infinitely divisible $R$-diagonal distributions.
The parametrization mentioned in the 
title of the section arises naturally, in the way indicated in the 
following remark.

\begin{remnotation}   \label{rem:61}
Let $\Rcinfdiv$ denote the set of all the $R$-diagonal 
distributions in $\cD_c (1,*)$ that are $\boxplus$-infinitely
divisible.  It is immediate that the 
bijection $\bB_{(1,*)}$ from Theorem \ref{thm:42} induces 
a bijection (still denoted $\bB_{(1,*)}$)
\begin{equation}   \label{eqn:61a}
\bB_{(1,*)} : \{ \mu \in \cD_c (1,*) :
\mu \mbox{ is $\eta$-diagonal} \} \to \cR_c^{\mathrm{(inf-div)}}.
\end{equation}
On the other hand,  Theorem \ref{thm:51} provides 
a natural bijection 
\begin{equation}   \label{eqn:61b}
\Phi: \cPplus_c \times \cPplus_c \to
\{ \mu \in \cD_c (1,*) : \mu \mbox{ is $\eta$-diagonal} \}. 
\end{equation}
The map
\begin{equation}   \label{eqn:61c}
\Psi := \bB_{(1,*)} \circ \Phi
: \cPplus_c \times \cPplus_c 
\to \cR_c^{\mathrm{(inf-div)}}
\end{equation}
is therefore a bijection as well.  We refer to  $\Psi$ 
as the {\em BBP parametrization of $\Rcinfdiv$}. Every choice of parameters $\sigma_1, \sigma_2 \in \cPplus_c$
yields a distribution 
$\nu = \Psi ( \sigma_1, \sigma_2 ) \in \Rcinfdiv$ , and every
$\nu \in \Rcinfdiv$ arises from a unique pair $\sigma_1, \sigma_2$.

We emphasize that the bijection $\Psi$ works in a really 
straightforward way -- the coefficients of the $\eta$-series 
of $\sigma_1$ and $\sigma_2$ give the determining sequences 
of $\nu = \Psi ( \sigma_1 , \sigma_2 )$.
It is actually worth recording a direct consequence of 
this fact, as follows.
\end{remnotation}

\begin{notation}  \label{def:62a} 
We denote by $\cEplus_c$ the collection of those series 
$f \in \bC [[z]]$ with the property that $f = \eta_{\sigma}$ for 
some $\sigma \in \cPplus_c $ (where $\sigma$ is, a fortiori, 
uniquely determined).
\end{notation}

\begin{proposition}   \label{prop:62b}
Let $\nu \in \cD_c (1,*)$ be an $R$-diagonal 
distribution, and let $( \alpha_n )_{n=1}^{\infty}$
and $( \beta_n )_{n=1}^{\infty}$ be its determining 
sequences. Set   
$f(z)  = \sum_{n=1}^{\infty} \alpha_n z^n$ and
$g(z)  = \sum_{n=1}^{\infty} \beta_n z^n$. Then 
$\nu$ is $\boxplus$-infinitely divisible if and only if 
both $f$ and $g$ belong to $\cEplus_c$. 
\end{proposition}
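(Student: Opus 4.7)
The plan is to reduce the claim to the BBP parametrization
$\Psi = \bB_{(1,*)} \circ \Phi : \cPplus_c \times \cPplus_c \to \Rcinfdiv$
from Remark \ref{rem:61}, by identifying explicitly how the determining
sequences of $\nu \in \Rcinfdiv$ are encoded in its parametrizing pair
$(\sigma_1, \sigma_2)$. The key identity I would establish first is the
following: if $\nu_0 = \Psi(\sigma_1, \sigma_2)$, then the $R$-diagonal
determining sequences of $\nu_0$ are the coefficient sequences of the
one-variable series $\eta_{\sigma_1}$ and $\eta_{\sigma_2}$.

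To prove this identity I would chase the definition of $\Psi$ through its
two constituent bijections. First, let $\mu = \Phi(\sigma_1, \sigma_2)$.
By Theorem \ref{thm:51}, the measures $\sigma_1, \sigma_2$ are the distributions
of $ZZ^{*}$ and $Z^{*}Z$ in $(\bC \langle Z, Z^{*} \rangle, \mu)$, so
Proposition \ref{prop:212} identifies the $\eta$-diagonal determining sequences
of $\mu$ as the coefficient sequences of $\eta_{\sigma_1}$ and $\eta_{\sigma_2}$.
Second, Theorem \ref{thm:42} gives $R_{\nu_0} = \eta_\mu$, and comparing the
diagonal forms of $R_{\nu_0}$ (as an $R$-diagonal distribution) and $\eta_\mu$
(as an $\eta$-diagonal distribution) transfers those same sequences verbatim
into the $R$-diagonal determining sequences of $\nu_0$.

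With this identity in hand, both implications follow at once. For the forward
direction, given $\nu \in \Rcinfdiv$ I take
$(\sigma_1, \sigma_2) = \Psi^{-1}(\nu)$; the identity then forces
$f = \eta_{\sigma_1}$ and $g = \eta_{\sigma_2}$, and both series lie in
$\cEplus_c$ by Notation \ref{def:62a}. For the reverse direction, I pick
$\sigma_1, \sigma_2 \in \cPplus_c$ realizing $f = \eta_{\sigma_1}$ and
$g = \eta_{\sigma_2}$, and set $\nu_0 = \Psi(\sigma_1, \sigma_2) \in \Rcinfdiv$.
By the identity, $\nu_0$ is $R$-diagonal with the same determining sequences
$(\alpha_n), (\beta_n)$ as $\nu$; hence $R_{\nu_0} = R_\nu$, and since
$\mu \mapsto R_\mu$ is injective on $\Dalg(1,*)$ (Remark \ref{def:32}(1)),
we conclude $\nu = \nu_0 \in \Rcinfdiv$.

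There is no serious obstacle here; the proposition is essentially the statement
that the parametrization $\Psi$ is readable off the $R$-transform of $\nu$. The
one bookkeeping point worth writing out carefully is the first identity, which
threads the $R = \eta$ intertwining built into $\bB_{(1,*)}$ through the
explicit description of $\Phi$ from Section 5 and the one-variable $\eta$-series
formulas of Proposition \ref{prop:212}.
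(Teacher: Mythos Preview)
Your proposal is correct and follows essentially the same approach as the paper's proof: both directions go through the BBP parametrization $\Psi$, using that the determining sequences of $\Psi(\sigma_1,\sigma_2)$ are precisely the coefficient sequences of $\eta_{\sigma_1}$ and $\eta_{\sigma_2}$. The paper's proof takes this last fact for granted (it is recorded in Remark \ref{rem:61}), whereas you spell out its justification via Proposition \ref{prop:212} and the intertwining $R_{\bB_{(1,*)}(\mu)} = \eta_\mu$; otherwise the arguments are the same.
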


\begin{proof}  If $\nu \in \Rcinfdiv$, then 
$\nu = \Psi ( \sigma_1 , \sigma_2 )$ for some 
$\sigma_1 , \sigma_2 \in \cPplus_c$, hence 
$f = \eta_{\sigma_1}$ and $g = \eta_{\sigma_2}$, and so 
$f,g \in \cEplus_c$.
Conversely, suppose that $f,g \in \cEplus_c$, so
$f = \eta_{\sigma_1}$ and $g = \eta_{\sigma_2}$ for 
some $\sigma_1 , \sigma_2 \in \cPplus_c$.  Then the distribution 
$\widetilde{\nu} := \Psi ( \sigma_1 , \sigma_2 )$ belongs to
$\Rcinfdiv$, and the definition of $\Psi$ shows that 
$\widetilde{\nu}$ has the same determining sequences as $\nu$.  
This forces $\nu = \widetilde{\nu}$, hence $\nu \in \Rcinfdiv$.
\end{proof}

The criterion provided by Proposition \ref{prop:62b} is 
useful because one can (following the work in \cite{BB2005})
characterize the series from $\cEplus_c$ in terms of the associated 
analytic functions.  We will follow up on this in the application 
presented in Section 7.

Since we are dealing with free probabilistic structures, it is 
natural to ask what is the description of the BBP parametrization 
$\Psi$ in terms of $R$-transforms.  Recall (Remark \ref{rem:34}) 
that an $R$-diagonal $*$-distribution $\nu \in \cD_c ( 1,*)$ is 
uniquely determined by the $R$-transforms
$R_{ {ZZ^{*}} }, R_{ {Z^{*}Z} } \in \bC [[z]]$.
The following result thus provides  an alternative 
characterization of what is $\Psi ( \sigma_1, \sigma_2 )$.

\begin{theorem}   \label{thm:62}
Let $\sigma_1, \sigma_2\in\cPplus_c$ and set 
$\nu = \Psi ( \sigma_1, \sigma_2 )$. Then the $R$-transforms of 
$ZZ^{*}$ and of $Z^{*}Z$ in the $*$-probability
space $( \bC \langle Z, Z^{*} \rangle, \nu )$ are 
described as follows:
\begin{equation}   \label{eqn:62a}
R_{ {ZZ^{*}} } (z) 
= R_{ \bB ( \sigma_1 ) }
\left(  z( 1 + M_{ \bB ( \sigma_2 ) } (z))  \right) 
,\quad  
R_{ {Z^{*}Z} } (z) 
= R_{ \bB ( \sigma_2 ) }
\left( z( 1 + M_{ \bB ( \sigma_1 ) } (z)) \right),
\end{equation}
where $\bB(\sigma_1)$ and $\bB(\sigma_2)$ indicate
the original  BBP bijection \emph{(}as discussed in 
Remark \emph{\ref{rem:45})}.
\end{theorem}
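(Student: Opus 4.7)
The plan is to assemble the result from three ingredients already at our disposal: Proposition \ref{prop:212}, which identifies the determining sequences of an $\eta$-diagonal distribution with the coefficients of the one-variable $\eta$-series of $ZZ^{*}$ and $Z^{*}Z$; the defining property $R_{\bB_{(1,*)}(\mu)} = \eta_{\mu}$ of the bijection $\bB_{(1,*)}$, which transfers determining sequences from an $\eta$-diagonal distribution to the corresponding $R$-diagonal one; and Proposition \ref{prop:36}, which expresses the $R$-transforms of $ZZ^{*}$ and $Z^{*}Z$ for an $R$-diagonal $\nu$ in terms of the one-variable $R$-transforms of a pair of auxiliary variables $a,b$ whose $R$-transforms encode the determining sequences. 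The key observation that ties everything together is that, by the classical BBP bijection on $\cP_c$, the natural choice of $a$ and $b$ are variables whose distributions are $\bB(\sigma_1)$ and $\bB(\sigma_2)$, respectively.

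First I would unwind the definition $\nu = \Psi(\sigma_1, \sigma_2) = \bB_{(1,*)}(\Phi(\sigma_1, \sigma_2))$. Let $\mu = \Phi(\sigma_1, \sigma_2)$; by Theorem \ref{thm:51} this is the unique $\eta$-diagonal distribution in $\cD_c(1,*)$ for which the distributions of $ZZ^{*}$ and $Z^{*}Z$ under $\mu$ coincide with $\sigma_1$ and $\sigma_2$. Let $(\alpha_n)_{n=1}^{\infty}$ and $(\beta_n)_{n=1}^{\infty}$ denote the determining sequences of $\mu$. Proposition \ref{prop:212} then gives
\[
\sum_{n=1}^{\infty} \alpha_n z^n \;=\; \eta_{{}_{ZZ^{*}}}(z) \;=\; \eta_{\sigma_1}(z),
\qquad
\sum_{n=1}^{\infty} \beta_n z^n \;=\; \eta_{{}_{Z^{*}Z}}(z) \;=\; \eta_{\sigma_2}(z).
\]
By the defining equation (\ref{eqn:42a}) of $\bB_{(1,*)}$, we have $R_{\nu} = \eta_{\mu}$, and since $\eta_{\mu}$ is diagonal with coefficients $(\alpha_n)$ and $(\beta_n)$, so is $R_{\nu}$. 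Thus $\nu$ is $R$-diagonal with the same determining sequences as $\mu$.

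Next I would apply Proposition \ref{prop:36} to $\nu$. The conclusion of that proposition requires a pair of elements $a,b$ in some noncommutative probability space whose one-variable $R$-transforms are $\sum_{n=1}^\infty \alpha_n z^n$ and $\sum_{n=1}^\infty \beta_n z^n$, respectively. By the preceding paragraph these series are $\eta_{\sigma_1}$ and $\eta_{\sigma_2}$. Recalling the defining property $R_{\bB(\sigma)} = \eta_{\sigma}$ of the classical BBP bijection $\bB : \cP_c \to \cP_c^{(\mathrm{inf\text{-}div})}$ (Remark \ref{rem:45}), we can take $a$ and $b$ to be selfadjoint elements in a $C^{*}$-probability space whose distributions are $\bB(\sigma_1)$ and $\bB(\sigma_2)$, so that $R_a = R_{\bB(\sigma_1)}$ and $R_b = R_{\bB(\sigma_2)}$, and correspondingly $M_a = M_{\bB(\sigma_1)}$ and $M_b = M_{\bB(\sigma_2)}$.

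Substituting these identifications into the formulas of Proposition \ref{prop:36} yields
\[
R_{{}_{ZZ^{*}}}(z) = R_{\bB(\sigma_1)}\!\bigl(z(1 + M_{\bB(\sigma_2)}(z))\bigr),
\quad
R_{{}_{Z^{*}Z}}(z) = R_{\bB(\sigma_2)}\!\bigl(z(1 + M_{\bB(\sigma_1)}(z))\bigr),
\]
which are precisely the equalities of (\ref{eqn:62a}). There is no serious obstacle in this argument: each step is a direct invocation of a previously established result. The only point that requires care is the transition in Proposition \ref{prop:36} from the formal statement (which assumes the existence of variables $a,b$ with prescribed $R$-transforms) to the concrete realization via the classical BBP bijection on $\cP_c$; this is immediate because $\bB(\sigma_1)$ and $\bB(\sigma_2)$ lie in $\cP_c^{(\mathrm{inf\text{-}div})} \subset \cP_c$ and hence are represented by selfadjoint elements of some $C^{*}$-probability space.
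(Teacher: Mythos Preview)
Your proof is correct and follows essentially the same approach as the paper: both set $\mu = \Phi(\sigma_1,\sigma_2)$, use Proposition~\ref{prop:212} to identify the determining sequences of $\mu$ (and hence of $\nu$, via $R_\nu = \eta_\mu$) with the coefficients of $\eta_{\sigma_1}$ and $\eta_{\sigma_2}$, then invoke the defining property $R_{\bB(\sigma_j)} = \eta_{\sigma_j}$ of the one-variable BBP bijection to realize the auxiliary variables $a,b$ of Proposition~\ref{prop:36} with distributions $\bB(\sigma_1)$ and $\bB(\sigma_2)$. The structure and the key ingredients match the paper's argument step for step.
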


\begin{proof}  
We set $\mu := \Phi ( \sigma_1, \sigma_2 )$, so $\nu$ is 
$R$-diagonal, $\mu$ is $\eta$-diagonal, and 
$\bB_{(1,*)} ( \mu ) = \nu$.  Thus
\[
R_{\nu} (z,z^{*})
= \eta_{\mu} (z,z^{*})
= \sum_{n=1}^{\infty} \alpha_n (zz^{*})^n
+ \sum_{n=1}^{\infty} \beta_n (z^{*}z)^n ,
\]
where $( \alpha_n )_{n=1}^{\infty}$ and $( \beta_n )_{n=1}^{\infty}$
are the (common) determining sequences for $\mu$ and for $\nu$.
By the definition of the bijection $\Phi$ in Theorem \ref{thm:51},
$\sigma_1$ has the same moments as the element $ZZ^{*}$ in the 
noncommutative probability space 
$( \bC \langle Z, Z^{*} \rangle , \mu )$.  This implies that 
$\eta_{\sigma_1} = \eta_{_ {ZZ^{*}} }$, and then Proposition 
\ref{prop:212} gives us the formula
$\eta_{\sigma_1} (z) = \sum_{n=1}^{\infty} \alpha_n z^n$.
In a similar way we find that 
$\eta_{\sigma_2} (z) = \sum_{n=1}^{\infty} \beta_n z^n$.

Consider now the probability measures 
$\bB ( \sigma_1 ), \bB ( \sigma_2 ) \in \cP_c$.  The 
definition of  $\bB$ implies
\[
R_{\bB ( \sigma_1 )} (z) 
= \eta_{\sigma_1} (z) = \sum_{n=1}^{\infty} \alpha_n z^n ,
\ \ 
R_{\bB ( \sigma_2 )} (z) 
= \eta_{\sigma_2} (z) = \sum_{n=1}^{\infty} \beta_n z^n .
\]
But then Proposition \ref{prop:36} applies to the $R$-diagonal
$*$-distribution $\nu$ and yields (\ref{eqn:62a}).
\end{proof}

As a consequence of Theorem \ref{thm:62}, we obtain a 
natural connection between the notions of $\boxplus$-infinite 
divisibility in $\cD_c (1,*)$ and in $\cP_c$.  This is stated
in the next corollary.  The converse of the corollary fails 
even in the tracial framework (see Remark \ref{rem:67} below).

\begin{corollary}   \label{cor:63}
Let $\nu\in\cD_c (1,*)$ be $R$-diagonal 
and let $\tau_1, \tau_2 \in \cPplus_c$ be the distributions 
of $ZZ^{*}$ and $Z^{*}Z$ in the $*$-probability space 
$( \bC \langle Z, Z^{*} \rangle , \nu )$ \emph{(}as discussed in 
Definition \emph{\ref{def:51})}.  If $\nu$ is 
$\boxplus$-infinitely divisible in $\cD_c (1,*)$, then  $\tau_1$ and $\tau_2$ are 
$\boxplus$-infinitely divisible in $\cP_c$.
\end{corollary}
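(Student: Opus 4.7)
The plan is to combine the BBP parametrization $\Psi$ (Remark \ref{rem:61}) with the explicit formula of Theorem \ref{thm:62}, and then to invoke the analytic half-plane characterization of $\Pcinfdiv$ going back to Bercovici-Voiculescu. Since $\nu\in\Rcinfdiv$, there exist $\sigma_1,\sigma_2\in\cPplus_c$ with $\nu=\Psi(\sigma_1,\sigma_2)$. Write $\pi_j:=\bB(\sigma_j)$, which belongs to $\Pcinfdiv$ for $j=1,2$ by the very definition of the scalar BBP bijection (Remark \ref{rem:45}). The hypothesis and conclusion are symmetric in $\tau_1$ and $\tau_2$, so it suffices to treat $\tau_1$. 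Theorem \ref{thm:62} furnishes
\[
R_{\tau_1}(z)=R_{\pi_1}\bigl(z(1+M_{\pi_2}(z))\bigr).
\]

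Next I would recast this composition in terms of a Cauchy transform. A short manipulation of the integral representations $M_\mu(z)=\int \tfrac{tz}{1-tz}\,d\mu(t)$ and $G_\mu(w)=\int \tfrac{1}{w-t}\,d\mu(t)$ yields the identity $z(1+M_\mu(z))=G_\mu(1/z)$ for every $\mu\in\cP_c$, valid both as formal power series in a neighborhood of $0$ and as analytic functions wherever both sides are defined. Substituting gives the key structural identity
\[
R_{\tau_1}(z)=R_{\pi_1}\bigl(G_{\pi_2}(1/z)\bigr).
\]

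The conclusion then follows by tracking the image of $\bC^+$ through this composition. Recall the analytic characterization: a measure $\omega\in\cP_c$ lies in $\Pcinfdiv$ exactly when $R_\omega$ admits an analytic extension from a neighborhood of $0$ to the upper half plane $\bC^+$ with range contained in $\bC^+\cup\bR$ (see, e.g., \cite{BP1999,BB2005}). The map $z\mapsto 1/z$ is a biholomorphism from $\bC^+$ onto $\bC^-$; the Cauchy transform $G_{\pi_2}$ is Herglotz, so $G_{\pi_2}(\bC^-)\subseteq\bC^+$; and since $\pi_1\in\Pcinfdiv$, $R_{\pi_1}$ has an analytic extension to $\bC^+$ with range in $\bC^+\cup\bR$. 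The composition $R_{\pi_1}(G_{\pi_2}(1/z))$ is therefore analytic on $\bC^+$ with range in $\bC^+\cup\bR$, and it agrees with the formal power series $R_{\tau_1}(z)$ in a disk about $0$; the converse direction of the criterion then delivers $\tau_1\in\Pcinfdiv$. The main obstacle is the careful invocation of this analytic characterization in the precise form stated and the verification that the composition on $\bC^+$ is genuinely an analytic continuation of $R_{\tau_1}$; once these points are in place, the chain of half-plane preservations is routine.
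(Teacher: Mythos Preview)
Your proposal is correct and follows essentially the same route as the paper's proof: both use the parametrization $\nu=\Psi(\sigma_1,\sigma_2)$, Theorem \ref{thm:62}, and the analytic half-plane characterization of $\boxplus$-infinite divisibility (the paper cites \cite[Theorem 4.3]{V1986}) to exhibit $R_{\tau_1}$ as a composition of maps preserving $\bC^+$. The only cosmetic difference is that where the paper invokes \cite[Proposition 6.1]{BV1993} directly for the implication $z\in\bC^+\Rightarrow z(1+M_{\bB(\sigma_2)}(z))\in\bC^+$, you make this explicit via the identity $z(1+M_\mu(z))=G_\mu(1/z)$ and the Herglotz property of the Cauchy transform---which is exactly how that proposition is proved.
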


\begin{proof}  By symmetry, it suffices to show that  $\tau_1$ is $\boxplus$-infinitely divisible.
According to \cite[Theorem 4.3]{V1986},
a compactly supported Borel probability measure on 
$\bR$ is $\boxplus$-infinitely divisible if and only if its 
$R$-transform can be extended to an analytic self-map of the 
upper half-plane $\bC^{+}$. Suppose that $\nu=\Phi(\sigma_1,\sigma_2)$, where
 $\sigma_1, \sigma_2 \in \cPplus_c$.  The $R$-transform 
$R_{\tau_1}$, which is the same as $R_{ {ZZ^{*}} }$, is given 
by the first Equation (\ref{eqn:62a}).  By  \cite[Proposition 
6.1]{BV1993},  the moment series of the 
probability measure $\bB ( \sigma_2 )$ can be extended analytically 
to $\bC^{+}$ and this extension satisfies
\[
z \in \bC^{+} \Rightarrow
z ( 1 + M_{ \bB ( \sigma_2 )} (z) ) \in \bC^{+}.
\]
Finally, since $\bB ( \sigma_1 )$ is $\boxplus$-infinitely 
divisible, \cite[Theorem 4.3]{V1986} assures us 
that $R_{\bB ( \sigma_1 )}$ extends analytically to a self-map 
of $\bC^{+}$.  We conclude that for every $z  \in \bC^{+}$, 
$R_{\bB ( \sigma_1 )}$ is defined at 
$z ( 1 + M_{ \bB ( \sigma_2 )} (z) )$,  and that
\[
z \mapsto R_{\bB ( \sigma_1 )} 
\bigl( \, z ( 1 + M_{ \bB ( \sigma_2 )} ) (z) \, \bigr)
\]
is an analytic self-map on $\bC^{+}$, as required.
\end{proof}

In the remainder of this section, we discuss the KMS 
example.  In this special case one can process further the 
formulas from Theorem \ref{thm:62} and arrive at explicit
formulas (stated in Proposition \ref{prop:66}) for the 
distributions of $ZZ^{*}$ and of $Z^{*}Z$ 
in terms of the probability measures $\sigma_1, \sigma_2$ 
that parametrize $\nu$.  These formulas call
on some commonly used operations from the free harmonic 
analysis of $\cPplus_c$, that are reviewed in the following 
remark.

\begin{remark}   \label{rem:64}
({\em Some elements of free harmonic analysis on $\cPplus_c$.})
(1) Measures $\sigma\in\cP_c$ have 
{\em free additive convolution powers} with real exponent $t \in [ 1, \infty )$. More precisely, for every 
$\sigma \in \cP_c$ and $t \in [ 1, \infty )$, there exists a unique measure
$\tau \in \cP_c$ such that 
$R_{\tau} = t R_{\sigma}$ (see   \cite[pp. 228-231]{NS2006}).  This measure $\tau$ is denoted 
$\sigma^{\boxplus t}$. When $t$ is an integer, $\sigma^{\boxplus t}$
is simply the $t$-fold convolution 
$\sigma \boxplus \cdots \boxplus \sigma$.  The argument in 
\cite[pp. 228-231]{NS2006} also 
shows that $\sigma^{\boxplus t} \in \cPplus_c$ for all $t \in [1, \infty)$ if $\sigma \in \cPplus_c$.

The analogous result for Boolean convolution provides 
for every $\sigma \in \cP_c$ and $t \in ( 0, \infty )$ a
\emph{Boolean convolution power} $\sigma^{\uplus t}\in \cP_c$ such that 
$\eta_{\sigma^{\uplus t}} = t \eta_{\sigma}$ 
(see \cite[Theorem 3.6]{SW1997}). As in the free case, 
$\sigma^{\uplus t}\in \cPplus_c$ for every $t \in ( 0, \infty )$ 
if $\sigma \in \cPplus_c$ (see, for instance, the operator model 
constructed in \cite[Proposition 4.8]{BN2008}).

(2) The original BBP bijection 
$\bB : \cP_c \to \Pcinfdiv$ (Remark \ref{rem:45}) can be 
expressed using convolution powers, by the formula
\[
\bB ( \sigma ) 
= \left( \sigma^{\boxplus 2} \right)^{\uplus 1/2},
\quad \sigma \in \cP_c,
\]
which was proved in  \cite[Theorem 1.2]{BN2008b}).  The facts 
reviewed in (1) 
above imply that $\bB ( \sigma )\in\cPplus_c$ for every 
$\sigma \in \cPplus_c$. 

(3) \emph{Free multiplicative convolution} $\boxtimes$ is another binary operation defined on the set $\cPplus_c$. This operation corresponds to the product of free random variables.  Quite remarkably, 
$\bB|\cPplus_c$ was shown in \cite[Remark 3.9]{BN2008b} to be a homomorphism for $\boxtimes$, that is,
\[
\bB ( \sigma \boxtimes \sigma ' ) 
= \bB ( \sigma ) \boxtimes \bB ( \sigma ' ), \quad \sigma, \sigma ' \in \cPplus_c.
\]

(4) The free counterpart of the standard Poisson distribution is the
\emph{Marchenko-Pastur distribution} $\Pi_1$ (also known as the \emph{the free Poisson distribution}). This distribution is supported on the interval $[0,4]$ and it is Lebesgue absolutely continuous with density
\[
d \Pi_1 (t)/dt = \frac{1}{2 \pi} \sqrt{ (4-t)/t } ,
\quad 0 \leq t \leq 4.
\]
Its $R$-transform is
\[
R_{ {\Pi_1} } (z) = z/(1-z),  
\]
and a simple calculation using the definition of $\bB$
shows that 
\begin{equation}   \label{eqn:64a}
\Pi_1 = \bB \left( \frac{1}{2} ( \delta_0 + \delta_2 ) \right).
\end{equation}
A useful property of $\Pi_1$ is that it converts moment
series into $R$-transforms via the formula 
\begin{equation}   \label{eqn:64b}
R_{\sigma \boxtimes \Pi_1} = M_{\sigma}, \quad \sigma \in \cPplus_c.
\end{equation}
See, for instance, 
\cite[Propositions 17.2 and 17.4]{NS2006}.
\end{remark}

\begin{remark}    \label{rem:65}
Let $\sigma\in\cPplus_c$ and let $t >0$ be a real 
number.  The determining sequences $( \alpha_n )_{n=1}^{\infty}$ and
$( \beta_n )_{n=1}^{\infty}$ of the infinitely divisible $R$-diagonal 
$*$-distribution 
$\nu := \Psi ( \sigma^{\uplus t} , \sigma ) \in \Rcinfdiv$ satisfy
\[
\sum_{n=1}^{\infty} \beta_n z^n = \eta_{\sigma} (z),\quad
\sum_{n=1}^{\infty} \alpha_n z^n = 
\eta_{\sigma^{\uplus t}} (z) = t \eta_{\sigma} (z). 
\]
 Thus $\nu$ satisfies the KMS condition with 
parameter $t$:  $\alpha_n = t \beta_n$, 
$n \in \bN$ (Definition \ref{def:36}).
\end{remark}

\begin{proposition}   \label{prop:66}
With the notation of the preceding remark,  let 
$\tau_1, \tau_2 \in \cPplus_c$ be the distributions of 
$ZZ^{*}$ and of $Z^{*}Z$ in the noncommutative probability 
space $( \bC \langle Z, Z^{*} \rangle, \nu )$  \emph{(}as 
discussed in Definition \emph{\ref{def:51})}.  Then
\begin{equation}  \label{eqn:66a}
\tau_1 = \left(  \bB ( \sigma ) 
         \boxtimes \Pi_1  \right)^{\boxplus t}
\mbox{ and }
\tau_2 = \left(  \bB ( \sigma )^{\boxplus t} 
         \boxtimes \Pi_1 \right)^{\boxplus 1/t}.
\end{equation}
\end{proposition}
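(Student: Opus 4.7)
The plan is to reduce everything to the explicit $R$-transform formulas provided by Theorem \ref{thm:62}, using the KMS shape of $\nu$ and the functional equation for the $R$-transform. First I will record a key identity: since $R_{\bB(\sigma^{\uplus t})} = \eta_{\sigma^{\uplus t}} = t\eta_{\sigma} = t R_{\bB(\sigma)}$, the bijection $\bB$ converts Boolean into free convolution powers, i.e.\ $\bB(\sigma^{\uplus t}) = \bB(\sigma)^{\boxplus t}$. For brevity I will write $\rho := \bB(\sigma) \in \Pcinfdiv$. Note that $\rho$, being $\boxplus$-infinitely divisible, admits $\boxplus$-convolution powers $\rho^{\boxplus s}$ for every $s > 0$; similarly $\rho \boxtimes \Pi_1$ and $\rho^{\boxplus t} \boxtimes \Pi_1$ are $\boxplus$-infinitely divisible (their $R$-transforms are $M_\rho$ and $M_{\rho^{\boxplus t}}$, which by \cite[Proposition 6.1]{BV1993} extend to analytic self-maps of $\bC^+$, hence the criterion of \cite[Theorem 4.3]{V1986} applies). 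This ensures that the right-hand sides of \eqref{eqn:66a} are well defined.

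For $\tau_1$ I specialize Theorem \ref{thm:62} with $\sigma_1 = \sigma^{\uplus t}$ and $\sigma_2 = \sigma$. Using $\bB(\sigma^{\uplus t}) = \rho^{\boxplus t}$, I get
\[
R_{\tau_1}(z) = R_{\rho^{\boxplus t}}\bigl(z(1+M_\rho(z))\bigr) = t\,R_\rho\bigl(z(1+M_\rho(z))\bigr).
\]
Now the functional equation \eqref{eqn:32e} applied to $\rho$ collapses the inner expression: $R_\rho(z(1+M_\rho(z))) = M_\rho(z)$. Combining this with the Marchenko--Pastur identity \eqref{eqn:64b}, namely $M_\rho = R_{\rho \boxtimes \Pi_1}$, yields
\[
R_{\tau_1}(z) = t\,M_\rho(z) = t\,R_{\rho \boxtimes \Pi_1}(z) = R_{(\rho \boxtimes \Pi_1)^{\boxplus t}}(z),
\]
which gives the first equality in \eqref{eqn:66a}.

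For $\tau_2$ the same specialization of Theorem \ref{thm:62} gives
\[
R_{\tau_2}(z) = R_\rho\bigl(z(1+M_{\rho^{\boxplus t}}(z))\bigr) = \tfrac{1}{t}\,R_{\rho^{\boxplus t}}\bigl(z(1+M_{\rho^{\boxplus t}}(z))\bigr),
\]
where in the second step I use $R_{\rho^{\boxplus t}} = tR_\rho$ in reverse. Applying \eqref{eqn:32e} to the measure $\rho^{\boxplus t}$ and then \eqref{eqn:64b} to that same measure gives
\[
R_{\tau_2}(z) = \tfrac{1}{t}\,M_{\rho^{\boxplus t}}(z) = \tfrac{1}{t}\,R_{\rho^{\boxplus t} \boxtimes \Pi_1}(z) = R_{(\rho^{\boxplus t}\boxtimes \Pi_1)^{\boxplus 1/t}}(z),
\]
proving the second equality. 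The only real obstacle is bookkeeping: one must recognize that the apparent asymmetry in \eqref{eqn:66a} (the exponent $t$ versus $1/t$) is forced by whether the ``outer'' $R$-transform in Theorem \ref{thm:62} already carries the factor $t$ from the Boolean convolution power, or whether that factor must be extracted from the ``inner'' moment series via the identity $R_{\rho^{\boxplus t}} = tR_\rho$, and one must confirm well-definedness of $(\,\cdot\,)^{\boxplus 1/t}$ by appealing to infinite divisibility of $\rho^{\boxplus t} \boxtimes \Pi_1$.
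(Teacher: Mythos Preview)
Your proof is correct and follows essentially the same approach as the paper: specialize Theorem \ref{thm:62}, convert $\bB(\sigma^{\uplus t})$ into $\bB(\sigma)^{\boxplus t}$ via the defining relation $R_{\bB(\cdot)}=\eta_{(\cdot)}$, collapse using the functional equation \eqref{eqn:32e}, and finish with the Marchenko--Pastur identity \eqref{eqn:64b}. The paper only writes out the $\tau_1$ case and declares $\tau_2$ similar, whereas you carry out both and also justify the well-definedness of the fractional power $(\cdot)^{\boxplus 1/t}$ via infinite divisibility; these are welcome details but not a different route.
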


\begin{proof}  The two formulas  in (\ref{eqn:66a}) have 
similar proofs. We only verify the first one.
Since $\tau_1$ is the distribution of $ZZ^{*}$, we have
$R_{\tau_1} = R_{{ZZ^{*}} }$, and Theorem \ref{thm:62} yields
\begin{equation}   \label{eqn:66b}
R_{\tau_1} (z) = R_{ \bB ( \sigma_1 ) }
\left(  z( 1 + M_{ \bB ( \sigma_2 ) } (z)) \right) ,
\end{equation}
where $\sigma_1 = \sigma^{\uplus t}$ and 
$\sigma_2 = \sigma$.
The relation $\sigma_1 = \sigma^{\uplus t}$ and  (\ref{eqn:1a}) imply
$\bB ( \sigma_1 ) = \bB ( \sigma )^{\boxplus t}$, and hence
$R_{\bB ( \sigma_1 )}  = t \, R_{\bB ( \sigma )}$.  The equality 
(\ref{eqn:66b})  can be continued as follows:
\begin{align*}
R_{\tau_1} (z)
& = t \cdot R_{\bB ( \sigma )} \left(
    z( 1 + M_{\bB ( \sigma )} (z) ) \right)                  \\
& = t \cdot M_{\bB ( \sigma )} (z)              
    \mbox{ (by (\ref{eqn:32e})) }                             \\
& = t \cdot R_{\bB ( \sigma ) \boxtimes \Pi_1} (z)         
    \mbox{ (by (\ref{eqn:64b})) }   \\
& = R_{( \bB ( \sigma ) \boxtimes \Pi_1)^{\boxplus t}} (z).
\end{align*}
Thus the probability measures $\tau_1$ and 
$( \bB ( \sigma ) \boxtimes \Pi_1)^{\boxplus t}$ are equal because they have the same $R$-transform.
\end{proof}

\begin{remark}   \label{rem:67}
({\em Tracial case.})
In the special case when $t = 1$,  the preceding proposition reduces to
\begin{equation}   \label{eqn:67a}
\tau_1  = \tau_2 = \bB ( \sigma ) \boxtimes \Pi_1 .
\end{equation}
Using (\ref{eqn:64a})
and  invoking the multiplicativity of $\mathbb B$ (Remark \ref{rem:64}(3)), we can rewrite 
(\ref{eqn:67a}) as
\begin{equation}   \label{eqn:67b}
\tau_1  = \tau_2 = \bB \left( \sigma \boxtimes 
\frac{1}{2} ( \delta_0 + \delta_2 ) \right).
\end{equation}
This confirms the fact (Corollary \ref{cor:63})
that $\tau_1$ and $\tau_2$ are $\boxplus$-infinitely divisible
in $\cP_c$.

We conclude with an argument showing that the converse of 
Corollary \ref{cor:63} does not hold.
Choose a distribution
$\widetilde{\sigma} \in \cPplus_c$ that cannot be written as 
$\sigma \boxtimes \frac{1}{2} ( \delta_0 + \delta_2 )$ for any $\sigma\in\cPplus_c$. (For instance, $\widetilde{\sigma} =\frac13(\delta_0+\delta_1+\delta_2)$ is such a distribution.) Let $\nu \in \cD_c (1,*)$
be the tracial $R$-diagonal $*$-distribution defined by the 
requirement that the common distribution of $ZZ^{*}$ and 
$Z^{*}Z$ in $( \bC \langle Z, Z^{*} \rangle, \nu )$ is equal 
to $\bB ( \, \widetilde{\sigma} \, )$ (see  \cite[Proposition 15.13]{NS2006} for an argument that $\nu$ exists). 
The distributions of $ZZ^{*}$ and $Z^{*}Z$ are 
$\boxplus$-infinitely divisible in $\cP_c$, by construction.  
We show that $\nu$ is not $\boxplus$-infinitely divisible in 
$\cD_c (1,*)$.  Suppose, to get a contradiction, that $\nu$ is $\boxplus$-infinitely divisible. Then $\nu=\Psi(\sigma,\sigma)$ for some 
$\sigma \in \cPplus_c$. Since 
$\tau_1 = \tau_2 = \bB (  \widetilde{\sigma}  )$,
 (\ref{eqn:67b}) yields
$\bB (  \widetilde{\sigma}  ) = \bB \left( \sigma 
\boxtimes \frac{1}{2} ( \delta_0 + \delta_2 ) \right)$, and thus
$\widetilde{\sigma} = \sigma 
\boxtimes \frac{1}{2} ( \delta_0 + \delta_2 )$ because 
$\mathbb B$ is injective, contrary to the choice of 
$\widetilde{\sigma}$.
\end{remark}

\begin{example}   \label{ex:68}
({\em $\lambda$-circular distribution.})
Let $\lambda > 0$ be a parameter.  If in the setting of 
Remark \ref{rem:65} and Proposition \ref{prop:66} we take 
$\sigma = \delta_1$ (Dirac mass at $1$) and $t = \lambda$, 
then the resulting $*$-distribution 
$\nu \in \Rcinfdiv$ is the $\lambda$-circular distribution 
mentioned right before Definition \ref{def:36}.  Indeed, it 
is immediate that in this case the series
$\sum_{n=1}^{\infty} \beta_n z^n$ and
$\sum_{n=1}^{\infty} \alpha_n z^n$ from Remark \ref{rem:65}
are reduced to $\eta_{ \delta_1 } (z) = z$ and 
respectively to $t \eta_{ \delta_1 } (z) = \lambda z$; 
hence we have $\alpha_1 = \lambda, \beta_1 = 1$ and 
$\alpha_n = \beta_n = 0$ for all $n \geq 2$, as required in 
the definition of the $\lambda$-circular distribution.

In this example, the formulas indicated in Proposition 
\ref{prop:66} for the distributions of $ZZ^{*}$ and of 
$Z^{*}Z$ give free Poisson distributions.  In order to 
make this precise, we need to review another bit of notation:
for any two parameters $p,q > 0$ one has a 
{\em free Poisson distribution of rate $p$ and jump size $q$},
which we will denote as $\Pi_{p;q}$, and which appears in the
free analogue of the Poisson limit theorem (see e.g. 
Proposition 12.11 in \cite{NS2006}).  The Marchenko-Pastur
distribution reviewed in Remark \ref{rem:64}(4) corresponds 
to $p=q=1$ (so ``$\Pi_1$'' from there becomes ``$\Pi_{1;1}$'').  
For general $p, q > 0$, the formula given in 
Remark \ref{rem:64}(4) for the $R$-transform of $\Pi_1$ extends 
to
\[
R_{ \Pi_{p;q} } (z) = \frac{pqz}{1 - qz}.
\]

Returning to the example of the $\lambda$-circular 
distribution, an immediate processing of the formulas
(\ref{eqn:62a}) from Theorem \ref{thm:62} gives us that the 
$R$-transforms of $ZZ^{*}$ and of $Z^{*}Z$ in the 
noncommutative probability space 
$( \bC \langle Z, Z^{*} \rangle, \nu)$ are
\[
R_{ZZ^{*}} (z) = \frac{\lambda z}{1-z}, \ \ 
R_{Z^{*}Z} (z) = \frac{z}{1- \lambda z}. 
\]
For our example, this shows that the distributions 
$\tau_1$ and $\tau_2$ appearing in  (\ref{eqn:66a})
 (Proposition \ref{prop:66}) are free Poisson distributions:
\[
\tau_1 = \Pi_{\lambda ; 1}  \mbox{ and }
\tau_2 = \Pi_{1 / \lambda ; \lambda}.
\]
\end{example}

$\ $

\section{Stability of $\Rcinfdiv$ under free 
multiplicative convolution}

\begin{remark}  \label{rem:71} 
In this section we consider the operation $\boxtimes$ on 
$\cD_c (1,*)$, which follows the {\em multiplication} of 
$*$-free random variables (cf. Definition \ref{def:41}(3)).  
One has the remarkable fact that whenever 
$\mu, \mu' \in \cD_c (1,*)$ and at least one of $\mu, \mu'$ 
is $R$-diagonal, it follows that $\mu \boxtimes \mu'$ is 
$R$-diagonal as well (see \cite[Proposition 15.8]{NS2006}).  
If we make the 
additional assumption that both $\mu$ and $\mu '$ are $R$-diagonal,
then we have explicit formulas for the determining sequences 
of $\mu \boxtimes \mu '$ in terms 
of the determining sequences of $\mu$ and of $\mu '$.  To be 
precise, denote the determining sequences of $\mu$
by $(\alpha_n)_{n=1}^\infty$, $(\beta_n)_{n=1}^\infty$, and 
those of $\mu ' $ by $(\alpha^\prime_n)_{n=1}^\infty$, 
$(\beta^\prime_n)_{n=1}^\infty$. Tthen the 
determining sequences 
$(\widehat{\alpha}_n)_{n=1}^\infty$, 
$(\widehat{\beta}_n)_{n=1}^\infty$ of $\mu \boxtimes \mu '$
are given by:
\begin{align}    \label{eqn:71a}
\begin{cases}
\widehat{\alpha}_n=\displaystyle\sum_{
	\substack{\pi \sqcup \rho\in NC(2n)\\
	\pi=\{V_1,\ldots,V_p\} \in NC(1,3,\ldots,2n-1)\\
         \text{with }1\in V_1 , \text{ and}                      \\
	\rho=\{W_1,\ldots,W_r \}\in NC(2,4,\ldots,2n)}}
	\alpha_{|V_1|}\beta_{|V_2|}\cdots\beta_{|V_p|} 
        \alpha^\prime_{|W_1|}\cdots\alpha^\prime_{|W_r|},
		\\ 
                                                                 \\
\widehat{\beta}_n=\displaystyle\sum_{
	\substack{\pi \sqcup \rho\in NC(2n)\\
	\pi=\{V_1,\ldots,V_p\} \in NC(1,3,\ldots,2n-1)\\
         \text{with }1\in V_1 , \text{ and}                      \\
	\rho=\{W_1,\ldots,W_r \}\in NC(2,4,\ldots,2n)}}
	\beta^\prime_{|V_1|}\alpha^\prime_{|V_2|}\cdots
        \alpha^\prime_{|V_p|}\beta_{|W_1|}\cdots\beta_{|W_r|}.
	\end{cases}
	\end{align}

The formulas (\ref{eqn:71a}) were proved in 
\cite[Proposition 3.9]{KS2000}.  They can also be rephrased in 
terms of equations for power series, as shown in the next 
proposition.  The formulas (\ref{eqn:72a}) in the
proposition have appeared before (but only as a conjecture, 
without proof), in \cite[Section 5.3]{NSS2001}.  For the 
reader's convenience, we include the proof of how (\ref{eqn:72a})
is derived out of (\ref{eqn:71a}).
\end{remark}

\begin{proposition} \label{prop:72}
With the notation of Remark \emph{\ref{rem:71}}, suppose that we have 
elements $a,b,a^\prime,b^\prime$ in a noncommutative probability 
space $(\cA,\varphi)$ such that $\{a,b\}$ is free from 
$\{a^\prime,b^\prime\}$ and such that 
\begin{align*}
R_a(z)    & =\sum_{n=1}^{\infty}\alpha_n z^n, \, 
            R_b(z)=\sum_{n=1}^{\infty}\beta_n z^n,  \\
R_{a'}(z) & =\sum_{n=1}^{\infty}\alpha^\prime_n z^n, 
     \, R_{b'}(z)=\sum_{n=1}^{\infty}\beta^\prime_n z^n.
\end{align*}	
Assume moreover that $\beta_1\ne 0\ne\alpha_1^\prime$, so 
the series $R_b$ and $R_{a'}$ have inverses $R_b^{\langle -1 \rangle}$ and
$R_{a'}^{\langle -1 \rangle}$ relative to composition.
Then:
\begin{align} \label{eqn:72a}
\begin{cases}
\sum_{n=1}^\infty \widehat{\alpha}_n z^n
  & =\left(R_a \circ R_b^{\langle -1 \rangle} 
               \circ M_{ba^\prime}\right) (z),
\\
\sum_{n=1}^\infty \widehat{\beta}_n z^n
  & =\left(R_{b^{\prime}} \circ R_{a^\prime}^{\langle -1 \rangle} 
                          \circ M_{a^\prime b}\right) (z).
\end{cases}
\end{align}
\end{proposition}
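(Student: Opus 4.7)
My plan is to derive the two identities in (\ref{eqn:72a}) combinatorially from (\ref{eqn:71a}) via a \emph{segment decomposition} that isolates the block containing position $1$ and treats the remainder as an independent product of contributions along the gaps of that block.  The starting observation is that $\widehat{\alpha}_n$ has almost the same combinatorial structure as $\varphi((ba')^n)$: by the moment-cumulant formula combined with the vanishing of mixed free cumulants (since $b$ is free from $a'$), one has
\[
\varphi((ba')^n) = \sum_{\pi_O \sqcup \pi_E \in NC(2n)} \prod_{V \in \pi_O} \beta_{|V|} \prod_{W \in \pi_E} \alpha'_{|W|},
\]
where $\pi_O$ partitions the odd positions (carrying $b$) and $\pi_E$ the even ones (carrying $a'$).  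Comparing with (\ref{eqn:71a}), the only difference from $\widehat{\alpha}_n$ is the weight on the block $V_1 \in \pi_O$ containing position $1$: $\widehat{\alpha}_n$ uses $\alpha_{|V_1|}$ there, while $\varphi((ba')^n)$ uses $\beta_{|V_1|}$.

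For a fixed block $V_1 = \{s_1 = 1 < s_2 < \cdots < s_m\}$ of odd positions, non-crossingness forces every remaining block of $\pi_O \setminus \{V_1\}$ or $\pi_E$ to be contained in one of $m$ contiguous \emph{segments} of $\{1,\ldots, 2n\}$: the intervals $(s_\ell, s_{\ell+1})$ for $\ell = 1,\ldots, m-1$ together with the tail $(s_m, 2n]$.  Each segment starts and ends with an even position, so for some $k_\ell \geq 0$ it contains $k_\ell$ odd positions and $k_\ell + 1$ even positions; we have $\sum_\ell k_\ell = n - m$, and conversely the tuple $(k_1,\ldots, k_m)$ determines $V_1$ uniquely.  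The sub-partitions inside different segments are mutually independent, and the weighted sum of $\prod \beta_{|V|} \prod \alpha'_{|W|}$ over non-crossing sub-partitions within a segment of parameter $k$ is a polynomial $g_k$ in $\beta, \alpha'$ depending only on $k$.  Writing $G(z) := \sum_{k \geq 0} g_k z^k$, the segment decomposition yields the twin identities
\[
\varphi((ba')^n) = \sum_{m=1}^n \beta_m \sum_{k_1 + \cdots + k_m = n - m} g_{k_1} \cdots g_{k_m}, \qquad \widehat{\alpha}_n = \sum_{m=1}^n \alpha_m \sum_{k_1 + \cdots + k_m = n - m} g_{k_1} \cdots g_{k_m},
\]
where the inner sum runs over tuples $(k_1,\ldots,k_m)$ of non-negative integers with prescribed sum.

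Setting $T(z) := z G(z)$ and summing over $n$, the first identity becomes $M_{ba'}(z) = \sum_m \beta_m T(z)^m = R_b(T(z))$; since $\beta_1 \neq 0$, $R_b$ has a compositional inverse, so $T(z) = R_b^{\langle -1 \rangle}(M_{ba'}(z))$.  The second identity then reads $\sum_n \widehat{\alpha}_n z^n = \sum_m \alpha_m T(z)^m = R_a(T(z)) = R_a(R_b^{\langle -1 \rangle}(M_{ba'}(z)))$, which is exactly the first formula in (\ref{eqn:72a}).  The second formula of (\ref{eqn:72a}) follows by the mirror-image argument: compare $\widehat{\beta}_n$ with $\varphi((a'b)^n)$, whose expansion differs only in the weight on the block containing position $1$ ($\beta'_{|V_1|}$ versus $\alpha'_{|V_1|}$); the segment kernel becomes some $h_k$ built from the sequences $\alpha', \beta$, and the same manipulation yields $M_{a'b}(z) = R_{a'}(z H(z))$ and $\sum_n \widehat{\beta}_n z^n = R_{b'}(z H(z)) = R_{b'}(R_{a'}^{\langle -1 \rangle}(M_{a'b}(z)))$, using $\alpha'_1 \neq 0$ to invoke invertibility of $R_{a'}$.

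The main technical obstacle I anticipate lies in the bookkeeping of the segment decomposition --- specifically, verifying that any non-$V_1$ block straddling two segments necessarily creates a crossing with $V_1$, and checking that the segment kernel $g_k$ really is insensitive to the segment's location in $\{1, \ldots, 2n\}$ (so that internal segments and the tail segment contribute identically).  Once these points are in hand, the passage from the combinatorial identities to (\ref{eqn:72a}) via the generating function $G$ is essentially automatic.
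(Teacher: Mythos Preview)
Your proof is correct and follows essentially the same route as the paper's: fix the block $V_1$ containing position $1$, factor the remaining sum over the segments between consecutive elements of $V_1$, and read off $M_{ba'} = R_b \circ T$ and $\sum_n \widehat{\alpha}_n z^n = R_a \circ T$. The only cosmetic difference is that the paper identifies your abstract kernel $g_k$ explicitly as the moment $\varphi(a'(ba')^k)$ (so that your $T(z)$ is $\sum_{n\ge 1}\varphi(a'(ba')^{n-1})z^n$), whereas you leave $g_k$ unnamed and simply observe that the same kernel appears in both the $\widehat{\alpha}$ and the $M_{ba'}$ expansions.
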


\begin{proof} The second equation in (\ref{eqn:72a}) follows from 
the first one if we substitute $b', a', b$ for $a,b,a'$, respectively. 
To prove the first equation, we fix an $n \in \bN$ and we suitably 
structure the formula for $\widehat{\alpha}_n$ provided in 
(\ref{eqn:71a}).  Let us also momentarily fix an $m\le n$ and a 
set $V_1=\{2i_1 -1,\dots,2i_m-1\}$, where $1=i_1<\cdots <i_m\le n$. 
Denote $n_k=i_{k+1}-i_k$, $k=1,\dots,m$, where $i_{m+1}=n+1$. Note 
that $n_1+\cdots+n_m=n$ and that $V_1$ can recovered from 
$n_1,\dots,n_m$. Use the moment-cumulant formula as in the proof 
of Proposition \ref{prop:36} (using the cumulant functionals and 
the fact that the mixed cumulants of $a'$ and $b$ vanish on account 
of freeness) to obtain
\[
\sum_{
	\substack{\pi \sqcup \rho\in NC(2n)\\
	\pi=\{V_1,\ldots,V_p\} \in NC(1,3,\ldots,2n-1)\\
	\rho=\{W_1,\ldots,W_r \}\in NC(2,4,\ldots,2n)}}
	\alpha_{|V_1|}\beta_{|V_2|}\cdots\beta_{|V_p|} 
        \alpha^\prime_{|W_1|}\cdots\alpha^\prime_{|W_r|}=\alpha_m\prod_{k=1}^m\varphi(a'(ba')^{n_k-1}).
\]
Letting $V_1$ vary,  (\ref{eqn:71a}) yields, for the $n \in \bN$ that 
we had fixed:
\[
\widehat{\alpha}_n=\sum_{m=1}^n\alpha_m\sum_{n_1+\cdots+n_m=n}
\prod_{k=1}^m\varphi(a'(ba')^{n_k-1}).
\]
We now let $n$ vary in $\bN$, and get that
\begin{equation}   \label{eqn:72b}
\sum_{n=1}^\infty \widehat{\alpha}_n z^n=\sum_{n=1}^\infty \alpha_m(g(z))^m=R_a(g(z)),
\end{equation}
where
\begin{equation}   \label{eqn:72c}
g(z) = \sum_{n=1}^{\infty} 
\varphi (  a' (ba')^{n-1}  ) z^n \in \bC [[z]].
\end{equation}

It remains to show that the series $g$ introduced in 
(\ref{eqn:72c}) is equal to 
$R_b^{\langle -1 \rangle} \circ M_{ba^\prime} $ or, equivalently, 
that one has $R_b\circ g= M_{ba^\prime} $.
To see this, apply again the moment-cumulant formula (using, as in \cite[Theorem 14.4]{NS2006} the fact that $b$ is free from $a'$) to obtain
\begin{equation}   \label{eqn:72d}
\varphi (  (ba')^n  ) 
= \displaystyle\sum_{
	\substack{\pi \sqcup \rho\in NC(2n)\\
	\pi=\{V_1,\ldots,V_p \} \in NC(1,3,\ldots,2n-1)\\
	\rho=\{W_1 ,\ldots,W_r \}\in NC(2,4,\ldots,2n)}}
	\beta_{|V_1|} \cdots \beta_{|V_p|} 
        \alpha^\prime_{|W_1|}\cdots \alpha^\prime_{|W_r|}.
\end{equation}
In (\ref{eqn:72d}) we can list the blocks of $\pi$ such that 
$1\in V_1$.  A similar argument to the one used above to structure
the formula for $\widehat\alpha_n$ shows now that
\[
\varphi (  (ba')^n  )=
\sum_{m=1}^n\beta_m\sum_{n_1+\cdots+n_m=n}\prod_{k=1}^m\varphi(a'(ba')^{n_k-1}),\quad n\in\mathbb N,
\]
and this implies the desired relation $M_{ba'}  = R_b \circ g$.
\end{proof}

\begin{remark}\label{remarca}
With the notation of the preceding proposition, suppose that $\beta_n=0$ for every $n\in\mathbb N$. Then the only non-zero term in the first equality in (\ref{eqn:71a}) corresponds to  $\pi=1_n$  and $\rho=0_n$, and therefore $\widehat\alpha_n=\alpha_n(\alpha'_1)^n$.
\end{remark}

The next corollary presents a reformulation of 
(\ref{eqn:72a}) which has the advantage that it introduces in 
discussion two power series $F$ and $\widetilde{F}$, related 
with the subordination results of \cite{B1998}.

\begin{corollary} \label{cor:73}
In the framework of Proposition \emph{\ref{prop:72}}, we have
	\begin{align} \label{eqn:73a}
	\begin{cases}
	\sum_{n=1}^\infty \widehat{\alpha}_n z^n&=R_a \left( F(z) \left(1+M_b(F(z))\right) \right)
	\\ 
	\sum_{n=1}^\infty \widehat{\beta}_n z^n&=R_{b^\prime} \left( \widetilde{F}(z) \left(1+M_{a^\prime}(\widetilde{F}(z))\right) \right),
	\end{cases}
	\end{align}
where $F=M_b^{\langle -1 \rangle} \circ M_{ba^\prime}$ and 
$\widetilde{F}=M_{a^\prime}^{\langle -1 \rangle} 
\circ M_{a^\prime b}$.
\end{corollary}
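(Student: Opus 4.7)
The plan is to derive Corollary~\ref{cor:73} from Proposition~\ref{prop:72} via a one-line manipulation using the functional equation of the $R$-transform recorded in (\ref{eqn:32e}). By the symmetry between the two formulas in (\ref{eqn:73a}) (obtained by exchanging the roles of $(a,b)$ and $(b',a')$, and correspondingly of $F$ and $\widetilde{F}$), I would only treat the first equality.

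Comparing the right-hand sides of the first formulas in (\ref{eqn:72a}) and (\ref{eqn:73a}), I would reduce the task to establishing the single identity
$$
R_b^{\langle -1 \rangle} \bigl( M_{ba'}(z) \bigr)
= F(z) \bigl( 1 + M_b(F(z)) \bigr)
$$
in $\bC [[z]]$. Indeed, applying $R_a$ on the outside of this identity and invoking the first line of (\ref{eqn:72a}) yields precisely the first line of (\ref{eqn:73a}).

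To establish this identity, first I would note that the assumption $\beta_1 \ne 0$ of Proposition~\ref{prop:72} ensures that both $R_b$ and $M_b$ have compositional inverses in $\bC [[z]]$. Substituting $z \mapsto M_b^{\langle -1 \rangle}(w)$ into the functional equation $R_b \bigl( z(1+M_b(z)) \bigr) = M_b(z)$ from (\ref{eqn:32e}) gives
$$
R_b \bigl( M_b^{\langle -1 \rangle}(w)\,(1+w) \bigr) = w,
$$
hence $R_b^{\langle -1 \rangle}(w) = M_b^{\langle -1 \rangle}(w)\,(1+w)$. Setting $w = M_{ba'}(z)$ and observing that $M_b(F(z)) = M_b \circ M_b^{\langle -1 \rangle} \circ M_{ba'}(z) = M_{ba'}(z)$ by the very definition of $F$, I can rewrite $(1+w) = 1 + M_b(F(z))$, yielding the displayed identity.

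There is no genuine obstacle here; the argument is pure verification with formal power series, the only bookkeeping being the existence of $R_b^{\langle -1 \rangle}$ and $M_b^{\langle -1 \rangle}$, which is exactly what the nondegeneracy hypothesis $\beta_1 \ne 0 \ne \alpha_1'$ of Proposition~\ref{prop:72} provides.
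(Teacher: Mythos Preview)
Your proof is correct and essentially identical to the paper's own argument: both reduce, by symmetry and Proposition~\ref{prop:72}, to the identity $R_b^{\langle -1 \rangle}(M_{ba'}(z)) = F(z)\bigl(1+M_b(F(z))\bigr)$, derive $R_b^{\langle -1 \rangle}(w) = (1+w)\,M_b^{\langle -1 \rangle}(w)$ from the functional equation (\ref{eqn:32e}), and finish by substituting $w = M_{ba'}(z)$ together with $M_b(F(z)) = M_{ba'}(z)$.
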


\begin{proof}  By symmetry, it suffices to prove the first of the two equations. Using (\ref{eqn:72a}), we see that  we must verify the identity
\begin{equation}   \label{eqn:73b}
( R_b^{\langle -1 \rangle} \circ M_{ba'} ) (z)
= F(z) \bigl( 1 + M_b(F(z)) ).
\end{equation}
Recalling the assumption that $\varphi (b) \neq 0$, the functional equation
$M_b(z)=R_b(z(1+M_b(z)))$ can be rewritten as
\begin{equation}    \label{eqn:73c}
R_b^{\langle -1 \rangle} (w) 
= (1+w) M_b ^{\langle -1 \rangle} (w)
\end{equation}
(see  \cite[Remark 16.18]{NS2006}).
Substitute $M_{ba'}$ for $w$ in (\ref{eqn:73c}) to find that 
\[
\bigl( R_b^{\langle -1 \rangle} \circ M_{ba'} \bigr) (z) 
= (1+ M_{ba'} (z) ) \cdot 
\bigl( M_b ^{\langle -1 \rangle} \circ M_{ba'} \bigr) (z)
= (1+ M_{ba'} (z) ) \cdot F(z).
\]
Finally, the definition of $F$ implies that 
$M_{ba'} (z) = M_b (F(z))$, and using this equality in the right hand side of the preceding equality yields (\ref{eqn:73b}).
\end{proof}

The following lemma is an immediate consequence of the 
definition of $\Psi$ (Remark \ref{rem:61}). 

\begin{lemma}   \label{lemma:77}
Consider a distribution $\nu \in \Rcinfdiv$, and let 
$( \alpha_n )_{n=1}^{\infty}$ and 
$( \beta_n )_{n=1}^{\infty}$ be its determining sequences.
There exist positive elements $a,b$ in a 
$C^{*}$-probability space $( \cA , \varphi )$ such that 
\[
R_a (z) = \sum_{n=1}^{\infty} \alpha_n z^n, 
\quad\text{and}\quad R_b (z) = \sum_{n=1}^{\infty} \beta_n z^n.
\]
Moreover, the distributions of $a$ and $b$ are 
$\boxplus$-infinitely divisible.
\end{lemma}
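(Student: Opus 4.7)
The plan is to construct $a$ and $b$ by pulling back along the BBP bijection applied to the measures that parametrize $\nu$ via $\Psi$.

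First, since $\nu \in \Rcinfdiv$, Proposition \ref{prop:62b} (or equivalently the surjectivity of $\Psi$ in Remark \ref{rem:61}) produces measures $\sigma_1, \sigma_2 \in \cPplus_c$ such that
\[
\eta_{\sigma_1} (z) = \sum_{n=1}^{\infty} \alpha_n z^n \quad\text{and}\quad \eta_{\sigma_2} (z) = \sum_{n=1}^{\infty} \beta_n z^n.
\]
Next, I would apply the classical BBP bijection $\bB$ (Remark \ref{rem:45}) to each of these measures. By Remark \ref{rem:64}(2), $\bB ( \cPplus_c ) \subseteq \cPplus_c$, so $\bB ( \sigma_1 )$ and $\bB ( \sigma_2 )$ are compactly supported Borel probability measures on $[0, \infty )$; moreover, they are $\boxplus$-infinitely divisible by the very definition of $\bB$.

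Now I would realize these measures as distributions of positive operators. Specifically, since $\bB ( \sigma_1 ), \bB ( \sigma_2 ) \in \cPplus_c$, each can be realized as the spectral distribution of a positive element in some $C^{*}$-probability space. Taking a free product (or working inside a single sufficiently large $C^{*}$-probability space) I can find positive $a, b$ in a common $C^{*}$-probability space $( \cA , \varphi )$ with distributions $\bB ( \sigma_1 )$ and $\bB ( \sigma_2 )$, respectively. The defining property of $\bB$ yields
\[
R_a (z) = R_{ \bB ( \sigma_1 ) } (z) = \eta_{\sigma_1} (z) = \sum_{n=1}^{\infty} \alpha_n z^n,
\]
and similarly $R_b (z) = \sum_{n=1}^{\infty} \beta_n z^n$. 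The infinite divisibility of the distributions of $a$ and $b$ follows at once from the fact that $\bB ( \sigma_1 ), \bB ( \sigma_2 ) \in \Pcinfdiv$.

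There is no real obstacle here: every step is a direct invocation of a result already proved in the paper. The only point worth a moment of care is that we are producing positive operators (not merely selfadjoint ones), which is ensured by the fact that each $\bB ( \sigma_i )$ is supported on $[0, \infty )$.
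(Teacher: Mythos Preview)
Your proof is correct and follows essentially the same approach as the paper's: write $\nu = \Psi(\sigma_1,\sigma_2)$, observe that the determining sequences are the coefficients of $\eta_{\sigma_1}$ and $\eta_{\sigma_2}$, and take $a,b$ to be positive elements with distributions $\bB(\sigma_1),\bB(\sigma_2)\in\cPplus_c$. You are slightly more explicit than the paper in placing $a$ and $b$ in a common $C^{*}$-probability space via a free product, which is indeed required by the statement.
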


\begin{proof}  Write $\nu = \Psi ( \sigma_1, \sigma_2 )$, 
with $\sigma_1, \sigma_2 \in \cPplus_c$.  Then 
\[
\sum_{n=1}^{\infty} \alpha_n z^n = \eta_{\sigma_1} (z)
= R_{ \bB ( \sigma_1 ) } (z),
\]
where $\bB ( \sigma_1 )$ is a $\boxplus$-infinitely divisible 
distribution in $\cPplus_c$ (cf. Remark \ref{rem:64}(2)).
Thus taking $a$ to be a positive element with distribution 
$\bB ( \sigma_1 )$ in some $C^{*}$-probability space will 
fulfill the required conditions.  The argument for $b$ is 
similar.
\end{proof}

In reference to the set of power series $\cEplus_c$ introduced 
in Notation \ref{def:62a}, we record a result 
which follows easily from \cite[Proposition 2.2]{BB2005}.

\begin{proposition}   \label{prop:76}
A series $f \in \bC [[z]]$ belongs to the set $\cEplus_c$
 if and only if it satisfies
the following three conditions:
\begin{enumerate}
\item[\rm(i)] $f$ has real coefficients;
\item[\rm(ii)] $f$ has positive convergence radius;
\item[\rm(iii)] $f$ can be extended to an analytic map \emph{(}still denoted $f$\emph{)} of 
$\bC^{+}$ into $\overline{\bC^{+}}$ such that $f(0)=0$ and
$\mathrm{Arg}(z) \leq \mathrm{Arg}(f(z))$ for $z \in \bC^{+}$.
\end{enumerate}
\hfill $\square$
\end{proposition}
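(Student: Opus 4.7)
The plan is to derive Proposition \ref{prop:76} from \cite[Proposition 2.2]{BB2005}, which gives an analytic characterization of the set $\cEplus$ of $\eta$-series of \emph{all} (not necessarily compactly supported) Borel probability measures on $[0,\infty)$. Concretely, that proposition should say that a power series $f$ belongs to $\cEplus$ precisely when $f$ has real coefficients and extends to an analytic map from $\bC^{+}$ into $\overline{\bC^{+}}$ with $f(0)=0$ and $\mathrm{Arg}(z) \leq \mathrm{Arg}(f(z))$ for $z \in \bC^{+}$. Our task is then to show that, within this class, the compactly supported measures are singled out exactly by condition (ii).

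For the forward direction, I would take $f = \eta_{\sigma}$ with $\sigma \in \cPplus_c$ and verify the three conditions in turn. Condition (i) is immediate, since the coefficients of $M_{\sigma}$ are the (real) moments of $\sigma$, and those of $\eta_{\sigma} = M_{\sigma}/(1+M_{\sigma})$ are polynomials in them. For (ii), if $\mathrm{supp}(\sigma) \subseteq [0,R]$, then $|\mathrm{Cf}_n(M_{\sigma})| \leq R^n$, so $M_{\sigma}$ has convergence radius at least $1/R$; since $M_{\sigma}(0)=0$, the series $\eta_{\sigma}=M_{\sigma}/(1+M_{\sigma})$ has positive radius as well. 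Condition (iii) is exactly what \cite[Proposition 2.2]{BB2005} asserts for $\eta_{\sigma}$, applied to the measure $\sigma \in \cPplus \supseteq \cPplus_c$.

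For the reverse direction, assume $f$ satisfies (i), (ii), (iii). Conditions (i) and (iii) are the hypotheses of \cite[Proposition 2.2]{BB2005}, so that result produces a probability measure $\sigma$ on $[0,\infty)$ such that, as formal power series around $0$, $\eta_{\sigma} = f$. It only remains to upgrade $\sigma$ to a compactly supported measure, and here condition (ii) does all the work. From $M_{\sigma} = \eta_{\sigma}/(1-\eta_{\sigma}) = f/(1-f)$ and $f(0)=0$, the series $M_{\sigma}$ has the same positive convergence radius $r>0$ as $f$. The coefficients of $M_{\sigma}$ are precisely the moments $m_n = \int_0^{\infty} t^n\, d\sigma(t)$, so $\limsup_n m_n^{1/n} \leq 1/r < \infty$. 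A standard Chebyshev argument (for any $R > 1/r$ and $\varepsilon > 0$, $\sigma([R+\varepsilon,\infty)) \leq m_n/(R+\varepsilon)^n \to 0$) then forces $\mathrm{supp}(\sigma) \subseteq [0, 1/r]$, so $\sigma \in \cPplus_c$ and $f = \eta_{\sigma} \in \cEplus_c$.

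I do not anticipate any serious obstacle here once the cited proposition is invoked; the only nontrivial content beyond \cite{BB2005} is the passage from the positive radius of convergence of $f$ to the compactness of $\mathrm{supp}(\sigma)$, which is the elementary moment-growth argument sketched above.
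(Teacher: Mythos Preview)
Your proposal is correct and matches the paper's approach exactly: the paper states the proposition without proof, indicating only that it ``follows easily from \cite[Proposition 2.2]{BB2005},'' and you have supplied precisely those easy details---invoking \cite{BB2005} for conditions (i) and (iii), and handling the compact-support condition (ii) via the standard moment-growth argument. One small wording point: $M_\sigma = f/(1-f)$ need not have \emph{the same} convergence radius as $f$ (a zero of $1-f$ could intervene), but it certainly has \emph{some} positive radius since $f(0)=0$, and that is all your Chebyshev bound requires.
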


\begin{corollary}\label{corolar}
Suppose that the series 
$f(z)=\sum_{n=1}^\infty\alpha_n z^n\in\cEplus_c$ 
is not identically zero.  Then $\alpha_1>0$.
\end{corollary}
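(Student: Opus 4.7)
The plan is to unpack the definition of $\cEplus_c$ and read off $\alpha_1$ directly as a moment of the underlying measure. By hypothesis there exists $\sigma \in \cPplus_c$ with $f = \eta_\sigma$. Writing $M_\sigma(z) = \sum_{n=1}^\infty m_n z^n$, where $m_n = \int_0^\infty t^n \, d\sigma(t)$, and expanding
\[
\eta_\sigma = M_\sigma (1 + M_\sigma)^{-1} = \sum_{k=1}^\infty (-1)^{k+1} M_\sigma^k
\]
(the expansion being legitimate coefficient-wise in $\bC[[z]]$, as in Definition \ref{def:23}(4)), I would compare the coefficient of $z$ on both sides. Every term with $k \geq 2$ only contributes to coefficients of $z^n$ for $n \geq 2$, so
\[
\alpha_1 = \mathrm{Cf}_1 ( \eta_\sigma ) = \mathrm{Cf}_1 ( M_\sigma ) = m_1 = \int_0^\infty t \, d\sigma (t).
\]

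Since $\sigma$ is supported on $[0,\infty)$, the integrand is nonnegative, and hence $\alpha_1 \geq 0$. If $\alpha_1 = 0$ then $\int_0^\infty t \, d\sigma(t) = 0$ with a nonnegative integrand, which forces $\sigma$ to be concentrated at $\{0\}$, i.e., $\sigma = \delta_0$. But then $m_n = 0$ for every $n \in \bN$, so $M_\sigma \equiv 0$ and consequently $f = \eta_\sigma \equiv 0$, contradicting the hypothesis. Therefore $\alpha_1 > 0$. I do not foresee any real obstacle here; the argument is essentially a one-line observation once the definition of $\cEplus_c$ is unfolded, and notably it does not invoke the analytic characterization recorded in Proposition \ref{prop:76}.
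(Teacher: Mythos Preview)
Your argument is correct. It is also genuinely different from the paper's proof. The paper argues analytically via Proposition~\ref{prop:76}: assuming the first nonzero coefficient $\alpha_n$ has $n>1$ or $\alpha_1<0$, it picks $\gamma\in\bC^{+}$ with $|\gamma|=1$ and $\Im(\gamma^n\alpha_n)<0$, and uses $f(r\gamma)/r^n\to\gamma^n\alpha_n$ as $r\downarrow 0$ to conclude that $\Im f(r\gamma)<0$ for small $r$, contradicting the condition $\mathrm{Arg}(z)\le\mathrm{Arg}(f(z))$ on $\bC^{+}$. Your route bypasses the analytic characterization entirely and reads $\alpha_1$ off as the first moment $\int_0^\infty t\,d\sigma(t)$ of the parametrizing measure $\sigma\in\cPplus_c$, which immediately gives $\alpha_1\ge 0$ and, via $\alpha_1=0\Rightarrow\sigma=\delta_0\Rightarrow f\equiv 0$, the strict inequality. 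Your argument is more elementary and self-contained; the paper's argument is natural in context because the corollary is placed as an immediate consequence of the analytic description just recorded, and the same leading-term trick would establish slightly more (for instance, that all $\alpha_n$ are real, or constraints on the sign of the leading coefficient in related settings) without appealing to an underlying measure.
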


\begin{proof}
Let $n$ be the smallest integer such that $\alpha_n\ne0$ and suppose, 
to get a contradiction, that either $n>1$ or $n=1$ and $\alpha_n<0$.  Choose  $\gamma\in\mathbb{C}^+$ 
such that $|\gamma|=1$ and $\Im(\gamma^n\alpha_n)<0$. 
We have $\lim_{r\downarrow0}(f(r\gamma)/r^n))=\gamma^n\alpha_n$, 
and therefore $\Im f(r\gamma)<0$ for sufficiently small $r$, 
contrary to Proposition \ref{prop:76}(iii).
\end{proof}

We are now ready for the main result of this section.

\begin{theorem}  \label{thm:78}
For every  $\nu,\nu'\in\Rcinfdiv$ we have  $\nu \boxtimes \nu'\in\Rcinfdiv$.
\end{theorem}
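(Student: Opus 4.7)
The strategy is to apply Proposition \ref{prop:62b}. Since $\nu$ and $\nu'$ are $R$-diagonal, so is $\nu \boxtimes \nu'$ by \cite[Proposition 15.8]{NS2006}, so the task reduces to showing that the two power series $f(z) := \sum_{n \geq 1} \widehat{\alpha}_n z^n$ and $g(z) := \sum_{n \geq 1} \widehat{\beta}_n z^n$ formed from its determining sequences both belong to $\cEplus_c$. The symmetry between the two formulas in Corollary \ref{cor:73} reduces everything to $f$. Writing $\nu = \Psi(\sigma_1, \sigma_2)$ and $\nu' = \Psi(\sigma'_1, \sigma'_2)$, I would invoke Lemma \ref{lemma:77} together with a free-product construction to place positive, $\boxplus$-infinitely divisible elements $a, b, a', b'$ (with distributions $\bB(\sigma_1), \bB(\sigma_2), \bB(\sigma'_1), \bB(\sigma'_2)$ respectively) in a common tracial $C^*$-probability space, with $\{a,b\}$ free from $\{a',b'\}$. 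The degenerate cases are handled directly: if $\sum_n \beta_n z^n = 0$, Remark \ref{remarca} yields $f(z) = R_a(\alpha'_1 z)$, which lies in $\cEplus_c$ because $R_a \in \cEplus_c$ and $\alpha'_1 \geq 0$ by Corollary \ref{corolar}; if $\sum_n \alpha'_n z^n = 0$, inspection of (\ref{eqn:71a}) forces $f \equiv 0$ since every summand carries a factor $\alpha'_{|W_k|}$.

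In the main case $\beta_1, \alpha'_1 > 0$, Corollary \ref{cor:73} writes $f(z) = R_a\bigl(R_b^{\langle -1 \rangle}(M_{ba'}(z))\bigr)$. By multiplicativity of $\bB$ on $\cPplus_c$ (Remark \ref{rem:64}(3)), $M_{ba'} = M_{\hat{\kappa}}$, where $\hat{\kappa} := \bB(\sigma_2 \boxtimes \sigma'_1) \in \cPplus_c \cap \Pcinfdiv$. The inner composition $F := M_b^{\langle -1 \rangle} \circ M_{\hat{\kappa}}$ is precisely the multiplicative-convolution subordination function $\omega_b$ for the pair $(\bB(\sigma_2), \bB(\sigma'_1))$. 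Using the companion subordination function $\omega_{a'}$, the classical product identity $\omega_b(z)\omega_{a'}(z) = z\,\eta_{\hat{\kappa}}(z)$, and $R_b^{\langle -1 \rangle}(y) = (1+y)M_b^{\langle -1 \rangle}(y)$, one obtains the clean rewriting
\[
R_b^{\langle -1 \rangle}(M_{\hat{\kappa}}(z)) \;=\; \frac{z\,M_{\hat{\kappa}}(z)}{\omega_{a'}(z)}.
\]

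The analytic verification of $f \in \cEplus_c$ proceeds through Proposition \ref{prop:76}. The key input is that for every non-trivial $\sigma \in \cPplus_c$ the transform $\Sigma_\sigma(y) := \eta_\sigma^{\langle -1 \rangle}(y)/y$ sends $\bC^+$ into $\overline{\bC^-}$, which is immediate from $\eta_\sigma \in \cEplus_c$ by inverting its arg-inequality. Combined with the identity $S_{\bB(\sigma)} = \Sigma_\sigma$ and the closed forms $\omega_b(z) = z/\Sigma_{\sigma'_1}(M_{\hat{\kappa}}(z))$ and $\omega_{a'}(z) = z/\Sigma_{\sigma_2}(M_{\hat{\kappa}}(z))$, this places both subordination functions in $\cEplus_c$. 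Chaining $\mathrm{Arg}(z) \leq \mathrm{Arg}(\omega_{a'}(z)) \leq \mathrm{Arg}(M_{\hat{\kappa}}(z)) \leq \pi$ (the middle inequality using $M_{\hat{\kappa}}(z) = M_{\bB(\sigma'_1)}(\omega_{a'}(z))$ and $M_{\bB(\sigma'_1)} \in \cEplus_c$) shows that the argument of $zM_{\hat{\kappa}}(z)/\omega_{a'}(z)$ sits in $[\mathrm{Arg}(z), \pi]$, so $R_b^{\langle -1 \rangle} \circ M_{\hat{\kappa}}$ lies in $\cEplus_c$ with image in $\bC^+$; a final composition with $R_a \in \cEplus_c$, together with the direct check (via Proposition \ref{prop:76}) that $\cEplus_c$ is stable under composition of such functions, yields $f \in \cEplus_c$. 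The main obstacle is precisely this arg-inequality bookkeeping through the subordination apparatus; once the $\Sigma$-arg property and the product identity are in hand, the algebraic pieces combine cleanly.
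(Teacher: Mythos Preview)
Your overall architecture matches the paper's exactly: reduce via Proposition~\ref{prop:62b} to showing $\widehat{f}\in\cEplus_c$, construct the positive elements $a,b,a',b'$ by Lemma~\ref{lemma:77} plus a free product, handle the degenerate cases $\beta_1=0$ or $\alpha_1'=0$ separately, and in the main case use the formula from Proposition~\ref{prop:72}/Corollary~\ref{cor:73} together with closure of $\cEplus_c$ under composition (Proposition~\ref{prop:76}).

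The difference is in how you verify that the inner series $G:=R_b^{\langle -1\rangle}\circ M_{ba'}$ lies in $\cEplus_c$. The paper factors $G=h\circ F$ with $h(z)=z(1+M_b(z))$ and $F=M_b^{\langle -1\rangle}\circ M_{ba'}$, and then simply \emph{cites}: $h\in\cEplus_c$ by \cite[Proposition~6.1]{BV1993} and $F\in\cEplus_c$ by Biane's subordination theorem \cite{B1998}. That is the whole argument---three series, three citations, done. You instead rewrite $G(z)=zM_{\hat\kappa}(z)/\omega_{a'}(z)$ via the product identity $\omega_b\omega_{a'}=z\eta_{\hat\kappa}$ and chain the argument inequalities by hand through the $\Sigma$-transform. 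Your rewriting is correct and the arg-chain $\mathrm{Arg}(z)\le\mathrm{Arg}(\omega_{a'}(z))\le\mathrm{Arg}(M_{\hat\kappa}(z))\le\pi$ does yield $G\in\cEplus_c$, so the approach works.

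One point deserves care: you assert that $\Sigma_\sigma:\bC^+\to\overline{\bC^-}$ is ``immediate from $\eta_\sigma\in\cEplus_c$ by inverting its arg-inequality.'' Formally inverting gives this only on the \emph{image} $\eta_\sigma(\bC^+)$, not on all of $\bC^+$, and $\eta_\sigma$ need not be surjective. You only evaluate $\Sigma_{\sigma_2}$ at points $M_{\hat\kappa}(z)$, so what you really need is that $\omega_{a'}$ itself extends to $\bC^+$ with the arg-inequality---but that is precisely Biane's result \cite{B1998}, which the paper invokes directly. If you want to avoid that citation you must supply an independent reason why $\Sigma_\sigma=S_{\bB(\sigma)}$ extends analytically to $\bC^+$; this is true (it follows from the $\boxplus$-infinite divisibility of $\bB(\sigma)$ and the analytic theory of the $S$-transform), but it is not quite as immediate as your phrasing suggests. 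In short: your route is valid but effectively re-derives what the paper obtains in one line from \cite{B1998} and \cite{BV1993}.
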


\begin{proof}  Let
$( \alpha_n )_{n=1}^{\infty}$ and 
$( \beta_n )_{n=1}^{\infty}$ (respectively $( \alpha_n ')_{n=1}^{\infty}$ and 
$( \beta_n ' )_{n=1}^{\infty}$) denote the determining sequences 
of $\nu$ (respectively, $\nu'$). Two applications of Lemma 
\ref{lemma:77}, combined with a free product 
construction, allow us to construct a
$C^{*}$-probability space $( \cA , \varphi )$ and 
positive elements $a,b,a',b' \in \cA$ such that 
\begin{enumerate}
\item[(a)] $R_a (z) = \sum_{n=1}^{\infty} \alpha_n z^n$ and $R_b (z) = \sum_{n=1}^{\infty} \beta_n z^n$,
\item[(b)] $R_{a'} (z) = \sum_{n=1}^{\infty} \alpha_n' z^n$ and $R_{b'} (z) = \sum_{n=1}^{\infty} \beta_n' z^n$, and
\item[(c)]$\{ a,b \}$ is free from $\{ a',b' \}$.
\end{enumerate}
We know from Remark \ref{rem:71} 
 that $\nu\boxtimes\nu^\prime$ is an $R$-diagonal 
distribution in $\Dc (1,*)$. Let  $(\widehat{\alpha}_n)_{n=1}^{\infty}$ and 
$(\widehat{\beta}_n)_{n=1}^{\infty}$  denote the determining sequences of $\nu\boxtimes\nu^\prime$, and set
\[
\widehat{f} (z) := \sum_{n=1}^{\infty} 
\widehat{\alpha}_n z^n, \quad 
\widehat{g} (z) := \sum_{n=1}^{\infty} \widehat{\beta}_n z^n. 
\]
By Proposition \ref{prop:62b}, we have to prove that 
$\widehat{f},\widehat{g}\in\cEplus_c$. By symmetry, it suffices to 
show that $\widehat{f}\in\cEplus_c$, and this is done by verifying 
that $\widehat f$ satisfies conditions (i)--(iii) of Proposition 
\ref{prop:76}.  We dispose first of the simple case in which $\beta_1=0$. 
Corollary \ref{corolar} yields $\beta_n=0$ for all $n\in\mathbb N$, and 
Remark \ref{remarca} implies that
$\widehat f(z)=\sum_{n=1}^\infty \alpha_n (\alpha'_1 z)^n$. The desired conclusion follows because $\alpha'_1\ge 0$ and 
the series 
$\sum_{n=1}^\infty \alpha_n z^n$ belongs to $\cEplus_c$. Similarly, 
if $\alpha'_1=0$, Corollary \ref{corolar} yields $\alpha'_n=0$ for 
all $n\in\mathbb N$, and then (\ref{eqn:71a}) implies that $\widehat f=0$.

It remains to show that $\widehat f\in\cEplus_c$ when $\beta_1 \neq 0 \neq \alpha_1 '$. In this case, Corollary \ref{cor:73} shows that $\widehat f=R_a \left( F(z) \left(1+M_b(F(z))\right) \right)$, where $F = M_b^{\langle -1 \rangle} \circ M_{ba'}$. In other words, $\widehat f$ is the composition of the three power series $R_a(z)$, $z(1+M_b(z))$, and $F(z)$. We know that $R_a\in\cEplus_c$. 
It was proved in \cite{B1998} that $F\in\cEplus_c$. The series 
$z(1+M_b(z))$ also belongs to $\cEplus_c$ by 
\cite[Proposition 6.1]{BV1993}. Proposition \ref{prop:76} shows 
that the set $\cEplus_c$ is closed under composition. Therefore $\widehat f\in\cEplus_c$, thus concluding the proof.
\end{proof}

\begin{corollary}\label{cor:79}
Suppose that $\nu\in\Rcinfdiv$ is the $*$-distribution of an 
element $a$ in 
some $C^*$-probability space. Then the $*$-distribution of $a^n$ 
belongs to $\Rcinfdiv$ for every $n\in\mathbb N$.
\end{corollary}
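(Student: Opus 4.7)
The plan is to reduce the corollary to a repeated application of Theorem \ref{thm:78} via the standard free-probability fact that, for any $R$-diagonal element $b$ in a $C^{*}$-probability space with $*$-distribution $\mu$, the $*$-distribution of $b^n$ coincides with the free multiplicative convolution power $\mu^{\boxtimes n}$. Granting this fact, the corollary is immediate: iterating Theorem \ref{thm:78} $n-1$ times gives $\nu^{\boxtimes n}\in\Rcinfdiv$, and the fact (applied to $b=a$) identifies the $*$-distribution of $a^n$ with $\nu^{\boxtimes n}$, which therefore lies in $\Rcinfdiv$.

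To justify the intermediate fact, take $*$-free copies $a_1,\ldots,a_n$ of $a$ in a sufficiently large $C^{*}$-probability space, so that by definition $a_1 a_2\cdots a_n$ has $*$-distribution $\nu^{\boxtimes n}$. First check that both $a^n$ and $a_1\cdots a_n$ are $R$-diagonal: the former because $R$-diagonality of $\nu$ is equivalent to invariance of $R_\nu$ under the substitution $(Z,Z^{*})\mapsto(\lambda Z,\bar\lambda Z^{*})$ for $|\lambda|=1$, and this scaling sends $a^n$ to $\lambda^n a^n$ with $\lambda^n$ still ranging over the full unit circle as $\lambda$ does; the latter by \cite[Proposition 15.8]{NS2006}, recalled in Remark \ref{rem:71}. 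Since an $R$-diagonal $*$-distribution in $\Dc(1,*)$ is completely determined by the one-variable distributions of $ZZ^{*}$ and $Z^{*}Z$ (see Remark \ref{rem:34}(3)), it remains to verify
\[
a^n(a^n)^{*}\equaldist(a_1\cdots a_n)(a_1\cdots a_n)^{*}
\quad\text{and}\quad
(a^n)^{*}a^n\equaldist(a_1\cdots a_n)^{*}(a_1\cdots a_n).
\]

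The main obstacle is establishing these two equalities in distribution. I would use the polar-decomposition model for $R$-diagonal elements: after enlarging the ambient space, write $a=up$ with $u$ Haar unitary and $*$-free from $p=(a^{*}a)^{1/2}$, and take each $a_i=u_ip_i$ with $(u_1,p_1),\ldots,(u_n,p_n)$ mutually $*$-free copies of $(u,p)$. Expanding the moments of $(up)^n((up)^n)^{*}$ and of $(u_1p_1\cdots u_np_n)(u_1p_1\cdots u_np_n)^{*}$ via the moment-cumulant formula and using that only alternating free cumulants of Haar unitaries are non-zero, both expansions collapse (through a bijection between contributing non-crossing partitions, which pairs a ``nested Haar pair'' block in the power case with a ``same-copy'' Haar pair in the free-copies case) to the same polynomial in the moments of $p$; the analogous argument handles the $(a^n)^{*}a^n$ versus $(a_1\cdots a_n)^{*}(a_1\cdots a_n)$ comparison, completing the proof.
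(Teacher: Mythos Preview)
Your overall strategy coincides with the paper's: reduce to Theorem~\ref{thm:78} via the fact that the $*$-distribution of $a^n$ equals $\nu^{\boxtimes n}$. The paper's proof is a one-liner that simply cites this identity from \cite[Proposition~3.11]{KS2000} and applies Theorem~\ref{thm:78}.

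Where you go further is in sketching a proof of the identity, and there your argument has a genuine gap. You claim that $R$-diagonality of $\nu$ is equivalent to invariance of $R_\nu$ under $(Z,Z^{*})\mapsto(\lambda Z,\bar\lambda Z^{*})$ for $|\lambda|=1$. This is false: scaling invariance only forces $\cf_w(R_\nu)=0$ for \emph{unbalanced} words $w$ (those with unequal numbers of $1$'s and $*$'s), whereas $R$-diagonality requires vanishing on all non-\emph{alternating} words. For instance, a $*$-distribution with $\cf_{(1,1,*,*)}(R_\nu)=1$ and all other cumulants zero is scaling-invariant but not $R$-diagonal. So your argument does not establish that $a^n$ is $R$-diagonal.

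Your fallback via polar decomposition $a=up$ with $u$ Haar unitary $*$-free from $p=(a^{*}a)^{1/2}$ is also delicate here: that model characterizes $R$-diagonality in the \emph{tracial} case, but the present paper works in general (non-tracial) $C^{*}$-probability spaces, where $aa^{*}$ and $a^{*}a$ need not have the same distribution, and the ``bijection between contributing non-crossing partitions'' you allude to is left entirely unspecified. The actual proof in \cite{KS2000} proceeds directly through the combinatorics of free cumulants of products (the formula expressing $\kappa_m$ of products as sums over $NC$), which handles the non-tracial situation and simultaneously yields both the $R$-diagonality of $a^n$ and the equality of its determining sequences with those of $\nu^{\boxtimes n}$. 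Citing that result, as the paper does, is the clean route.
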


\begin{proof}  This follows from Theorem \ref{thm:78}
and the known fact   \cite[Proposition 3.11]{KS2000} that the 
distribution of $a^n$ is equal to $\nu^{\boxtimes n}$.
\end{proof}

We conclude the section by looking again at the KMS example,
and by describing explicitly the BBP parametrization for the 
powers of a $\lambda$-circular element.

\begin{remark}\label{rem:710}

(1) Suppose that $\nu, \nu ' \in \Dc (1,*)$ are $R$-diagonal and  
satisfy the KMS condition with parameters $t,t'\in( 0, \infty )$, 
respectively.  Consider the $*$-distribution $\nu \boxtimes \nu '$,
which is $R$-diagonal as well (see Remark \ref{rem:71}).  We 
claim that $\nu \boxtimes \nu '$ also satisfies the KMS condition, 
with parameter $t  t'$.  Using the same notations for determining 
sequences as in Remark \ref{rem:71}, this claim amounts to the 
fact that $\widehat{\alpha}_n=(t t^\prime) \widehat{\beta}_n$
for every $n\in\mathbb N$.  In order to prove this, we replace 
$\alpha_{|V_1|}$ and $\alpha'_{|W_1|}$ by $t\beta_{|V_1|}$ and 
$t\beta'_{|W_1|}$, respectively, in the first formula 
\eqref{eqn:71a} to obtain
\[
		\widehat{\alpha}_n= (t t^\prime) 	\sum_{
		\substack{\pi \sqcup \rho\in NC(2n)\\
				\pi=\{V_1,\ldots,V_k\} \in NC(1,3,\ldots,2n-1)\\
				\rho=\{W_1,\ldots,W_l\}\in NC(2,4,\ldots,2n)\\ 1\in V_1,2\in W_1}}
		\beta^\prime_{|W_1|}\alpha^\prime_{|W_2|}\cdots\alpha^\prime_{|W_l|} \beta_{|V_1|}\beta_{|V_2|}\cdots\beta_{|V_k|}.
\]
To see that the last sum equals $\widehat{\beta}_n$, we observe 
that pairs $(\pi,\rho)$ as above are in a bijective correspondence 
with pairs $(\widetilde{\pi},\widetilde{\rho})$ such that 
$\widetilde{\pi} \sqcup \widetilde{\rho}\in NC(2n)$ and 
$\widetilde{\pi} = \{\widetilde{W}_1,\ldots,\widetilde{W}_l\}
\in NC(1,3,\ldots,2n-1)$ and 
$\widetilde{\rho} = \{\widetilde{V}_1,\ldots,\widetilde{V}_k\} 
\in NC(2,4,\ldots,2n)$. 
Indeed,  $\widetilde{\rho}$ and $\widetilde{\pi}$ are obtained as $\widetilde{\pi}\sqcup\widetilde{\rho}=\gamma_{2n}^{-1}(\pi\sqcup\rho)$, where we use the permutation $\gamma_{2n}$ from the proof of Proposition \ref{prop:37}. Thus, the sum above is equal to\[
		\sum_{
			\substack{\pi^\prime \sqcup \rho^\prime\in NC(2n)\\
				\widetilde{\pi}=\{\widetilde{W}_1,\ldots,\widetilde{W}_l\} \in NC(1,3,\ldots,2n-1)\\
				\widetilde{\rho}=\{\widetilde{V}_1,\ldots,\widetilde{V}_k\}\in NC(2,4,\ldots,2n)\\ 1\in\widetilde{W}_1,2\in\widetilde V_1}}
		\beta^\prime_{|\widetilde{W}_1|}\alpha^\prime_{|\widetilde{W}_2|}\cdots\alpha^\prime_{|\widetilde{W}_l|} \beta_{|\widetilde{V}_1|}\cdots\beta_{|\widetilde{V}_k|},
\]
and this equals $\widehat\beta_n$ by the second formula  
\eqref{eqn:71a}.

\bigskip

(2) Now fix a real number $\lambda>0$ and  consider 
the $\lambda$-circular distribution 
$\nu= \Psi(\delta_\lambda,\delta_1)$, as in Example \ref{ex:68}.
If $a$ is an element in some $*$-probability space such that 
the $*$-distribution of $a$ is equal to $\nu$, then we will say 
that $a$ is a {\em $\lambda$-circular element}.  Such elements 
do of course exist, for instance we can just take $a = Z$ in the 
$*$-probability space $( \bC \langle Z, Z^{*} \rangle, \nu )$.
If $a$ is a $\lambda$-circular element, then Theorem \ref{thm:78} 
and Corollary \ref{cor:79} tell us that every power $a^k$ has 
$*$-distribution $\nu^{\boxtimes k} \in \Rcinfdiv$.  Moreover, 
part (1) of the present remark assures us that 
$\nu^{\boxtimes k}$ satisfies the KMS condition with parameter 
$\lambda^k$.  Hence for every $k \in \bN$ we have a BBP 
parametrization of the form 
\[
\nu^{\boxtimes k} = \Psi(\sigma_k^{\uplus\lambda^k},\sigma_k), 
\]
for some probability measure $\sigma_k \in \cPplus_c$.  For 
$k=1$, we know from Example \ref{ex:68} that $\sigma_1$ is 
the Dirac mass $\delta_1$.  The next proposition gives a way
of describing  $\sigma_k$ for $k \geq 2$.
\end{remark}

\begin{proposition}   \label{prop:713}
Let $\lambda$ and $( \sigma_k )_{k=1}^{\infty}$ be as above,
and consider on the other hand the probability measures with
finite support $( \tau_k )_{k=1}^{\infty}$ defined by
\begin{align*}
\tau_k := \frac{\lambda^k}{1+\lambda^k} \, \delta_0
+\frac{1}{1+\lambda^{k}} \, \delta_{1 + \lambda^{k}},
\ \ k \in \bN.
\end{align*}
Then one has
\begin{equation}   \label{eqn:713g}
\sigma_k = \tau_1 \boxtimes \cdots \boxtimes \tau_{k-1} , 
\ \ k \geq 2.
\end{equation}
\end{proposition}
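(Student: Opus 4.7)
I proceed by induction on $k \geq 1$, with the base case $\sigma_1 = \delta_1$ being the empty $\boxtimes$-product.

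For the inductive step, assume (\ref{eqn:713g}) holds for some $k \geq 1$; I show $\sigma_{k+1} = \sigma_k \boxtimes \tau_k$. I apply Proposition \ref{prop:72} to the factorization $\nu^{\boxtimes(k+1)} = \nu^{\boxtimes k} \boxtimes \nu$. The determining sequences of $\nu^{\boxtimes k}$ produce free positive elements $a,b$ with $R_a = \lambda^k \eta_{\sigma_k}$ and $R_b = \eta_{\sigma_k}$, so $b$ has distribution $\bB(\sigma_k)$. The determining sequences of $\nu$ produce elements $a',b'$ with $R_{a'}(z) = \lambda z$ and $R_{b'}(z) = z$; consequently $a' = \lambda \cdot 1$ and $b' = 1$ as scalars in the $C^*$-algebra. (The non-degeneracy hypotheses $\beta_1 \neq 0 \neq \alpha_1'$ are satisfied since each $\tau_j$ has first moment $1$, hence so does $\sigma_k$.) This scalarity yields $M_{a'b}(z) = M_b(\lambda z)$, $R_{a'}^{\langle -1 \rangle}(w) = w/\lambda$, and $R_{b'}(w) = w$, so that the second formula in (\ref{eqn:72a}) collapses to
\begin{equation*}
\eta_{\sigma_{k+1}}(z) = \frac{1}{\lambda}\, M_{\bB(\sigma_k)}(\lambda z).
\end{equation*}

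Next I convert this identity into a relation between $S$-transforms. Direct inversion (using $S_\mu(w) = M_\mu^{\langle -1\rangle}(w)(1+w)/w$) yields
\begin{equation*}
S_{\sigma_{k+1}}(w) = \frac{(1+w)\, S_{\bB(\sigma_k)}\!\left(\lambda w/(1+w)\right)}{1+(1+\lambda)w}.
\end{equation*}
By the inductive hypothesis and the fact that $\bB$ restricted to $\cPplus_c$ is a homomorphism for $\boxtimes$ (Remark \ref{rem:64}(3)), $\bB(\sigma_k) = \bB(\tau_1) \boxtimes\cdots\boxtimes \bB(\tau_{k-1})$. A direct computation from the moments of $\tau_j$ gives $\eta_{\tau_j}(z) = z/(1-\lambda^j z)$, so $\bB(\tau_j)$ is the free Poisson distribution $\Pi_{1/\lambda^j;\,\lambda^j}$, whose $S$-transform equals $1/(1+\lambda^j w)$. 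Thus $S_{\bB(\sigma_k)}(w) = \prod_{j=1}^{k-1}(1+\lambda^j w)^{-1}$.

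Substituting $\lambda w/(1+w)$ into $S_{\bB(\sigma_k)}$ and simplifying each factor via $(1 + \lambda^{j+1} w/(1+w))^{-1} = (1+w)/(1+(1+\lambda^{j+1})w)$, and then combining with the prefactor, I obtain, after reindexing,
\begin{equation*}
S_{\sigma_{k+1}}(w) = \prod_{l=1}^{k} \frac{1+w}{1+(1+\lambda^l)w} = \prod_{l=1}^{k} S_{\tau_l}(w),
\end{equation*}
where $S_{\tau_l}(w) = (1+w)/(1+(1+\lambda^l)w)$ is obtained from a direct computation with $\tau_l$. The multiplicativity of $S$ under $\boxtimes$ then gives $\sigma_{k+1} = \tau_1 \boxtimes\cdots\boxtimes \tau_k$, completing the induction. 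The main obstacle is the first step: recognizing that the special form of $\nu$ reduces the auxiliary elements $a',b'$ in Proposition \ref{prop:72} to scalars, and exploiting this to distill the clean identity $\eta_{\sigma_{k+1}}(z) = M_{\bB(\sigma_k)}(\lambda z)/\lambda$. The remaining $S$-transform bookkeeping, though intricate, is routine once one invokes the $\boxtimes$-homomorphism property of $\bB$ and the identification of each $\bB(\tau_j)$ as a free Poisson distribution.
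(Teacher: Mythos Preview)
Your proof is correct and follows essentially the same route as the paper's: both apply Proposition~\ref{prop:72} to the factorization $\nu^{\boxtimes(k+1)}=\nu^{\boxtimes k}\boxtimes\nu$, exploit the scalarity of $a',b'$ to obtain the key identity $\eta_{\sigma_{k+1}}(z)=\lambda^{-1}M_{\bB(\sigma_k)}(\lambda z)$, identify each $\bB(\tau_j)$ as a free Poisson law, and close with $S$-transform multiplicativity. The only organizational difference is that the paper stays on the $\bB$-side throughout---deriving the clean recursion $S_{\bB(\sigma_{k+1})}(z)=(1+\lambda z)^{-1}S_{\bB(\sigma_k)}(\lambda z)$, unwinding it to $S_{\bB(\sigma_k)}=\prod_{j=1}^{k-1}(1+\lambda^j z)^{-1}$, and then invoking injectivity of $\bB$ at the very end---whereas you pass to $S_{\sigma_{k+1}}$ directly and use the inductive hypothesis to substitute for $S_{\bB(\sigma_k)}$. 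Your substitution $w\mapsto\lambda w/(1+w)$ is a bit more laborious, but in exchange you avoid the final appeal to injectivity of $\bB$; both are perfectly valid ways to handle the same computation.
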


\begin{proof}  As in Remark \ref{rem:710}(2), we use the 
notation $\nu$ for the $\lambda$-circular distribution.  We fix
a $k \in \bN$ and  invoke Proposition \ref{prop:72} in the 
special case in which the $*$-distributions $\mu, \mu '$ considered 
there are $\nu^{\boxtimes k}$ and $\nu$, respectively.  The 
power series
\begin{equation}   \label{eqn:713a}
\sum_{n=1}^{\infty} \beta_n z^n, \ \ 
\sum_{n=1}^{\infty} \beta_n ' z^n, \ \ 
\sum_{n=1}^{\infty} \widehat{\beta}_n z^n
\end{equation}
from Proposition \ref{prop:72} are equal in this case to the 
$\eta$-series of the probability measures $\sigma_k$,
$\sigma_1$ and $\sigma_{k+1}$, respectively.  (For instance the 
equality $\sum_{n=1}^{\infty} \beta_n z^n = \eta_{\sigma_k} (z)$ 
follows from the comments at the end of Remark \ref{rem:61} and 
the fact that $\nu^{\boxtimes k} 
= \Psi ( \sigma_k^{\uplus \lambda^k}, \sigma_k )$.)
Note that, since $\sigma_1 = \delta_1$,
for the second power series in (\ref{eqn:713a}) we actually 
have $\sum_{n=1}^{\infty} \beta_n ' z^n = z$.

The notation of Proposition \ref{prop:72} also include some 
non-commutating random variables $a,b,a',b'$, where
$b$ is such that
\begin{equation}   \label{eqn:713b}
R_b (z) = \sum_{n=1}^{\infty} \beta_n z^n 
= \eta_{\sigma_k} (z)
= R_{ \bB ( \sigma_k ) } (z).
\end{equation}
From (\ref{eqn:713b}) we infer that the distribution of $b$ is 
$\bB ( \sigma_k )$.  Similar reasoning, based on the 
formulas $R_{b'} (z) = z$ and $R_{a'} (z) = \lambda z$, leads
to the fact that $a'$ and $b'$ have distributions 
$\delta_{\lambda}$ and $\delta_1$, respectively.  As a consequence,
we may assume without loss of generality that $a' = \lambda$
and $b' = 1$ in their noncommutative probability space.

We are interested in the second relation (\ref{eqn:72a}) from
Proposition \ref{prop:72}.  Due to the very simple form
of $R_{a'}$ and $R_{b'}$, this equation simplifies to 
\begin{equation}   \label{eqn:713c}
\sum_{n=1}^{\infty} \widehat{\beta}_n z^n
= \frac{1}{\lambda} M_{\lambda b} (z).
\end{equation}
The same argument as used in (\ref{eqn:713b}) shows that 
the left-hand side of (\ref{eqn:713c}) is equal to 
$R_{ \bB ( \sigma_{k+1} ) } (z)$.  On the right-hand side of 
(\ref{eqn:713c}) we perform the obvious transformation 
$M_{\lambda b} (z) = M_b ( \lambda z )
= M_{ \bB ( \sigma_k ) } ( \lambda z)$, and this leads us to 
a direct connection between 
$\sigma_k$ and $\sigma_{k+1}$:
\begin{equation}   \label{eqn:713d}
R_{ \bB ( \sigma_{k+1} ) } (z)
= \frac{1}{\lambda} 
M_{ \bB ( \sigma_k ) } ( \lambda z). 
\end{equation}

In order to make use of (\ref{eqn:713d}), it is convenient to 
resort to another well-known transform of free probability, 
the $S$-transform.  For a probability measure $\sigma \in \cP_c$ 
with non-vanishing mean, one defines the $S$-transform of 
$\sigma$ as the power series
\[
S_{\sigma} (z) = \frac{1}{z} R_{\sigma}^{\langle -1 \rangle} (z)
= \frac{z+1}{z} M_{\sigma}^{\langle -1 \rangle} (z)
\]
(see, for instance, 
\cite[Definition 18.15 and Remark 18.16 on p. 294]{NS2006}).
Some straightforward processing of Equation (\ref{eqn:713d}) 
(multiply both sides by $\lambda$, take inverses under 
composition, and write the resulting series in terms of the 
suitable $S$-transforms) 
then leads to the formula
\begin{equation}   \label{eqn:713e}
S_{ \bB ( \sigma_{k+1} ) } (z)
= \frac{1}{1 + \lambda z} 
S_{ \bB ( \sigma_k ) } ( \lambda z). 
\end{equation}

The formula (\ref{eqn:713e}) was obtained for a fixed (but 
arbitrary) $k \in \bN$.  We now unfix $k$ and use a 
straightforward induction argument, with base case 
$S_{ \bB ( \sigma_1 ) } (z) = S_{\delta_1} (z) = 1$, in 
order to infer that 
\begin{equation}   \label{eqn:713f}
S_{ \bB ( \sigma_{k} ) } (z)
= \prod_{j=1}^{k-1} \frac{1}{1 + \lambda^j z}, 
\ \ \forall \, k \in \bN .
\end{equation}

It remains to make the connection to the $\tau_k$
indicated in the statement of the proposition.  For every 
$j \in \bN$, an elementary calculation shows that 
$\bB( \tau_j)$ is the free Poisson distribution 
$\Pi_{1/\lambda^{j};\lambda^j}$, where 
the notation ``$\Pi_{p;q}$'' is as in Example $\ref{ex:68}$.
Another elementary calculation shows that the 
$S$-transform of $\Pi_{1/\lambda^{j};\lambda^j}$ is 
$1/( 1+ \lambda^j z)$.  Thus the right-hand side of 
(\ref{eqn:713f}) can be written as 
$S_{ \bB ( \tau_1 ) } (z) \, S_{ \bB ( \tau_2 ) } (z)
\cdots S_{ \bB ( \tau_{k-1} ) } (z)$.

Now, the $S$-transform is multiplicative with respect to the 
operation $\boxtimes$ (\cite[Corollary 18.17]{NS2006}).  
Since $\bB$ is multiplicative as well (Remark \ref{rem:64}(3)), 
the observations made in the preceding paragraph lead to the 
formula
\[
S_{ \bB ( \sigma_{k} ) } 
= S_{ \bB ( \tau_1 \boxtimes \cdots \boxtimes \tau_{k-1} ) },
\ \ k \geq 2.
\]
The required Equation (\ref{eqn:713g}) follows from here, 
since $\bB$ is injective and since a probability measure with 
non-vanishing mean is uniquely determined by its $S$-transform. 
\end{proof}

\end{document}